\documentclass[11pt, a4paper, abstract=yes]{scrartcl}
\usepackage[utf8]{inputenc}
\usepackage[T1]{fontenc}
\usepackage{graphicx}
\usepackage{longtable}
\usepackage{wrapfig}
\usepackage{rotating}
\usepackage[normalem]{ulem}
\usepackage{amsmath}
\usepackage{amssymb}
\usepackage{hyperref}
\usepackage{amsthm,amssymb,amsmath}
\usepackage{hyperref}
\usepackage{graphicx}
\usepackage[T1]{fontenc}\usepackage{lmodern}
\theoremstyle{definition}
\newtheorem{definition}{Definition}
\newtheorem{mexample}[definition]{Example}
\theoremstyle{remark}
\newtheorem{remark}[definition]{Remark}
\theoremstyle{plain}
\newtheorem{lemma}[definition]{Lemma}

\newtheorem{proposition}[definition]{Proposition}
\newtheorem{theorem}[definition]{Theorem}
\newtheorem{corollary}[definition]{Corollary}

\newtheorem{conjecture}[definition]{Conjecture}

\author{Jan Snellman\thanks{Department of Mathematics, Linköping University, 581 83 Linköping, Sweden. \texttt{jan.snellman@liu.se}. ORCID: \href{https://orcid.org/0009-0002-6676-5068}{0009-0002-6676-5068}}}
\usepackage{orcidlink}
\usepackage{booktabs}
\usepackage{array}  
\usepackage{longtable}
\usepackage{float}
\hypersetup{hidelinks}
\newcommand{\Koz}{\operatorname{Koz}}
\newcommand{\shift}{\operatorname{shift}}
\newcommand{\rasimp}{\angle}
\newcommand{\RA}{\operatorname{RA}}
\newcommand{\kozvec}{\vec\kappa}
\newcommand{\extten}{\mathsf{M}}
\newcommand{\proj}{\pi}
\newcommand{\onesvec}{\vec{\mathbf 1}}
\usepackage{marginnote}
\newcommand{\regionbox}[1]{\marginnote{\centering\footnotesize\begin{tikzpicture}[x=1.35cm,y=1.35cm,line width=0.35pt]
\fill[black!5] (0,0) rectangle (1,1);
#1
\draw[black!35,dashed] (0,0.7)--(1,0.7); \draw[black!35,dashed] (0.7,0)--(0.7,1);
\draw (0,0) rectangle (1,1);
\node[anchor=east,font=\tiny] at (0,0.85) {$d$};
\node[anchor=north,font=\tiny] at (0.35,0) {$m$};
\end{tikzpicture}\par}[1\baselineskip]}
\newcommand{\regones}[1]{\fill[blue!22] #1;}
\newcommand{\regzeros}[1]{\fill[red!25] #1;}
\newcommand{\regdiag}{\draw[black!45,line width=0.4pt] (0,0.7)--(0.7,0);}
\usepackage{etoolbox}
\AtEndEnvironment{mexample}{\hfill$\Diamond$}
\newcommand{\ff}{\operatorname{ff}}
\newcommand{\IM}{\operatorname{IM}}
\newcommand{\inn}{\operatorname{in}}
\newcommand{\gin}{\operatorname{gin}}
\newcommand{\conv}{\operatorname{conv}}

\author{Jan Snellman \orcidlink{0009-0002-6676-5068}}
\date{\today}
\title{Hilbert functions over the exterior algebra and \(f\)-vectors in higher rank\\\medskip
\large I: Amata--Crupi monomial modules, Kozlov polytopes, and r-vectors of simplicial complexes}
\hypersetup{
 pdfauthor={Jan Snellman \orcidlink{0009-0002-6676-5068}},
 pdftitle={Hilbert functions over the exterior algebra and \(f\)-vectors in higher rank},
 pdfkeywords={},
 pdfsubject={},
 pdfcreator={Emacs 30.2 (Org mode 9.7.11)}, 
 pdflang={English}}
\let\autocite\cite

\begin{document}

\maketitle
\begin{abstract}
Let \(E=\bigwedge(k^n)\) be the exterior algebra on \(n\) generators and let
\(F=\bigoplus_{i=1}^{r}Eg_i\) be a graded free \(E\)-module with \(\deg g_i=d_i\). We determine the
convex hull of the set of Hilbert functions of the quotients \(F/M\), where \(M\) runs over the
monomial submodules of Amata and Crupi --- those of the form \(M=I_1g_1\oplus\cdots\oplus I_rg_r\)
with each \(I_i\subseteq E\) a monomial ideal without minimal generators of degree \(\le1\). The
hull is the Minkowski sum
\begin{equation}\label{eq:eq}
  \Koz(n;d_1,\dots,d_r)\;=\;\sum_{i=1}^{r}\shift^{d_i}\bigl(\Koz(n)\bigr)
\end{equation}
of \(r\) shifted copies of one and the same Kozlov simplex \(\Koz(n)\), the \((n-1)\)-simplex which
Kozlov proved \autocite{Kozlov1997} to be the convex hull of the \(f\)-vectors of simplicial complexes on \(n\)
vertices. For \(r=1\) the statement is Kozlov's theorem. We give a self-contained proof, including a
proof of Kozlov's theorem from the local Lubell--Yamamoto--Meshalkin inequality and an explicit
facet description of \(\Koz(n)\) that appears not to be recorded in the literature. Along the way we
isolate the class of submodules for which the argument works --- the \emph{ideal-direct-sum modules},
those \(M\subseteq F\) that already split as \(\bigoplus_i(M\cap Eg_i)\), and show by example that
the Hilbert function of a graded submodule outside that class need not be a shifted sum of
\(f\)-vectors at all. The combinatorial content is recorded separately as a statement about
\(r\)-vectors of simplicial complexes and their \emph{ff}-vectors, in language that uses no algebra.
Finally we describe what is known about the vertex structure of \(\Koz(n;d_1,\dots,d_r)\): which
sums of vertices of the summands survive as vertices of the sum.
\end{abstract}

\medskip\noindent\textbf{Keywords.} exterior algebra; Hilbert function; \(f\)-vector; simplicial
complex; Kruskal--Katona theorem; Kozlov simplex; Minkowski sum; graded module.

\smallskip\noindent\textbf{2020 Mathematics Subject Classification.} Primary 13F55, 52B12;
Secondary 05E40, 13D40, 15A75.
\medskip

\setcounter{tocdepth}{2}
\tableofcontents
\section{Introduction}
\label{sec:orgd2e0c8b}

\subsection{The rank-one picture}
\label{sec:org68bd1d2}

The Hilbert function of a graded quotient of the exterior algebra is a \textbf{simpler} combinatorial
object than its symmetric-algebra counterpart. Let \(k\) be a field, \(V=k^n\), and
\(E=E_n=\bigwedge V\). Because \(e_i^2=0\) is a defining relation, \emph{every} monomial of \(E\) is
squarefree, so every monomial ideal of \(E\) is the Stanley--Reisner ideal \(I_\Delta\) of a
order ideal \(\Delta\subseteq\mathcal P([n])\), with no squarefreeness hypothesis needed, and
with no correction term relating \(\dim_k(E/I_\Delta)_j\) to \(\Delta\): the two agree on the nose,
\begin{equation}\label{eq:eq-2}
  \dim_k(E/I_\Delta)_j \;=\; f_j(\Delta),
\end{equation}
where \(f_j(\Delta)\) counts the faces of cardinality \(j\). Passing to an initial ideal preserves
Hilbert functions, and initial ideals of \(E\) are monomial; so the set of Hilbert functions of
graded quotients of \(E\) \emph{is} the set of \(f\)-vectors of simplicial complexes on at most \(n\)
vertices, exactly. Three classical results then describe that set from three different distances:

\begin{itemize}
\item the \textbf{Kruskal--Katona theorem} determines it exactly, by the cascade/shadow inequalities;
\item \textbf{Linusson} \autocite{Linusson1999} counts it;
\item \textbf{Kozlov} \autocite{Kozlov1997} computes its convex hull, and finds an \((n-1)\)-dimensional simplex whose vertices
are the \(f\)-vectors of the complete skeleta of the full simplex on \([n]\).
\end{itemize}

Kozlov's theorem is the coarsest of the three and the one this paper generalizes. It requires the
normalization \(f_1(\Delta)=n\), meaning that every vertex is present, which we call
\emph{properness}; the
theorem is false without it, not merely less sharp (Section \ref{sec-permissive}).
\subsection{The rank-\(r\) question}
\label{sec:org94ebf2b}

Amata and Crupi \autocite{AmataCrupi2020KK,AmataCrupi2020} generalized the Kruskal--Katona theorem from ideals of \(E\) to graded
submodules of a graded free module \(F=\bigoplus_{i=1}^rEg_i\) with \(\deg g_i=d_i\in\mathbb Z\).
Their basic structural observation (their Definition 2.1) is that a submodule generated by monomials
of \(F\) is automatically a direct sum \(M=I_1g_1\oplus\cdots\oplus I_rg_r\) of monomial ideals of
\(E\), one per free generator, because multiplication in \(E\) never moves a term from one
generator's slot to another's. The Hilbert function of the quotient then splits additively,
\begin{equation}\label{eq:eq-3}
  H_{F/M}(j)\;=\;\sum_{i=1}^{r}H_{E/I_i}(j-d_i),
\end{equation}
so a rank-\(r\) Hilbert function is a sum of \(r\) independently chosen \(f\)-vectors, each shifted
by its own generator degree. The question this paper answers is: \textbf{what is the convex hull of the set
of such Hilbert functions?}

The answer (Theorem \ref{thm-main}) is as clean as the question:
\begin{equation}\label{eq:eq-4}
  \conv\bigl\lbrace H_{F/M}\;:\;M\ \text{a proper monomial submodule of }F\bigr\rbrace
  \;=\;\sum_{i=1}^{r}\shift^{d_i}\bigl(\Koz(n)\bigr).
\end{equation}
We call this polytope the \textbf{Kozlov polytope} \(\Koz(n;d_1,\dots,d_r)\); at \(r=1\), \(d_1=0\) it is
\(\Koz(n)\) and the theorem is Kozlov's.

The proof is short, and deliberately so. Once the right class of submodules is fixed, the set of
achievable Hilbert functions is \emph{exactly} a sumset of \(r\) shifted copies of one finite point set
(Proposition \ref{prop-sumset}), and convex hull commutes with Minkowski sum and with linear maps. The
work is in fixing the class: Section \ref{sec-hierarchy} gives a graded submodule whose Hilbert
function is not a shifted sum of \(f\)-vectors at all, so the restriction cannot simply be dropped.

We also give the \textbf{vertex description} of the Kozlov polytope in the rank-two case: Theorem
\ref{thm-kozlov-matrix} says exactly which sums of a vertex of one summand and a vertex of the other
survive as vertices of the sum, and counts them. The facet description at rank two is not proved,
and is stated as a conjecture.
\subsection{What is not treated here}
\label{sec-scope}
\textbf{This paper treats rank at most two.} Rank one is Kozlov's theorem; rank two is the first genuinely
new case, and its vertex structure is settled here: the extremal matrix in closed form, the vertex
set, and the vertex count are all proved. The facet description at rank two is \emph{not} proved; it is
stated as a conjecture and verified exactly, as Section \ref{sec-methods} records. Rank three and beyond is the subject of
Part III of this series, where the vertex description for the all-ones simplex is proved at every
rank and the corresponding statement for the Kozlov vector is the main open problem. The convex-hull
theorem of Section \ref{sec-main} is stated and proved for every rank, since its proof is uniform and
splitting it would serve nobody; the restriction to rank two governs the finer vertex and facet
structure of Section \ref{sec-vertices}, which is where rank genuinely matters.

This paper stops at the convex hull. The finer question is which \emph{lattice points} of
\(\Koz(n;d_1,\dots,d_r)\) are achievable. Those lattice points are exactly the candidate Hilbert
functions: a point of the polytope with integer coordinates is a tuple that could be an
\(r\)-graded Hilbert function on numerical grounds, and the question is which of them actually
occur. That is the rank-\(r\) analogue of Kruskal--Katona rather than of Kozlov, and it is the
subject of a companion paper, and is answered there by a repaired form of Amata and
Crupi's own generalized Kruskal--Katona theorem. Example \ref{ex-worked} below exhibits an achievable
Hilbert function that is a lattice point of the hull but not a vertex of it, which is the shortest
way to see that the two questions genuinely differ.
\subsection{What is new here, and what is not}
\label{sec:orga28260e}

To be explicit about provenance, since much of the material below is classical.

\begin{center}
\small
\begin{tabular}{ll}
result & status\\
\hline
Thm. \ref{thm-kk}, Kruskal--Katona for \(E\) & classical; cited\\
Thm. \ref{thm-invariance}, invariance under \(\inn\) and \(\gin\) & classical; cited, proof sketched\\
Thm. \ref{thm-kozlov}, Kozlov's theorem & Kozlov 1997; short proof given\\
Cor. \ref{cor-facets}, facets of \(\Koz(n)\) & new, as far as we know\\
Rem. \ref{rem-ac-reduction}, \(\inn(M)\) is monomial & Amata--Crupi; cited\\
Def. \ref{def-ids} and Ex. \ref{ex-nonids}, the hierarchy & new\\
Prop. \ref{prop-in-ids}, \(\inn\) of an ids-module & new\\
Section \ref{sec-combinatorial}, \emph{ff}-vectors & new\\
Prop. \ref{prop-sumset}, the sumset identity & new\\
Thm. \ref{thm-main}, the main theorem & new\\
Thm. \ref{thm-ones}, extremal matrix, all-ones & new; proved\\
Thm. \ref{thm-kozlov-matrix}, extremal matrix and vertex set, Kozlov & new; proved\\
Conj. \ref{conj-ones-hrep}, facets of \(\RA(\onesvec(n);0,d)\), rank two & new; verified, not proved\\
Conj. \ref{conj-kozlov-hrep2}, facets of \(\RA(\kozvec(n);0,d)\), rank two & new; verified, not proved\\
\end{tabular}
\end{center}
\section{Notation}
\label{sec-notation}
Collected here, before anything uses it, so that a reader can consult one table rather than hunt
backwards. Pointers are to the defining statement, and nothing in this section is needed to read
the introduction.

One map is used throughout and is worth stating separately. \textbf{\textbf{\(\proj\) forgets the
degree-\(0\) entry}}, in whatever ambient dimension the surrounding argument has fixed: it is
\(\mathbb R^{n+1}\to\mathbb R^{n}\), \((x_0,x_1,\dots,x_n)\mapsto(x_1,\dots,x_n)\), and equally
\(\mathbb R^{N+1}\to\mathbb R^{N}\) where that is what is wanted. The overloading is deliberate
and costs nothing, since the dimension is always determined by what \(\proj\) is applied to.
\textbf{\textbf{Named objects live in the larger space; their images under \(\proj\) live in the smaller one}},
and the projection is written out rather than left to be understood. The shift operator
\(\shift^{d}\) of Definition \ref{def-shift} is overloaded in the same way and for the same reason: it
prepends \(d\) zeros, whether to a degree-\(0\)-inclusive vector (\(\mathbb R^{n+1}\to\mathbb
R^{N+1}\), which is the setting of Definition \ref{def-shift} and of Theorem \ref{thm-main}) or to a leg
vector's simplex in the dropped space (\(\mathbb R^{n}\to\mathbb R^{N}\), which is the setting of
Section \ref{sec-vertices}).

Two further conventions, stated once. \textbf{\textbf{\(\subset\) is not assumed strict}}: we write \(\subseteq\)
when containment is meant and \(\subsetneq\) when properness is, and use \(\subset\) only
where either would do; and \([n]:=\lbrace1,2,\dots,n\rbrace\) throughout.

\begingroup\small
\begin{center}
\begin{tabular}{@{}ll@{}}
  \toprule
  \multicolumn{2}{@{}l}{\emph{Algebra}}\\
  \(E=\bigwedge k^n\)          & the exterior algebra on \(n\) generators \\
  \(e_\sigma\)                 & the monomial \(e_{i_1}\cdots e_{i_k}\) for \(\sigma=\{i_1<\dots<i_k\}\) \\
  \(F=\bigoplus_i Eg_i\)       & free graded module, \(\deg g_i=d_i\), rank \(r\) \\
  \(H_{F/M}\)                  & Hilbert function of \(F/M\) \\
  \(\inn_>\)                   & initial submodule for a monomial order \(>\) \\
  \(\gin\)                     & generic initial submodule \\
  \(I_\Delta\), \(\IM(\vec\Delta)\) & Stanley--Reisner ideal; indicator module \\
  \addlinespace
  \multicolumn{2}{@{}l}{\emph{Combinatorics}}\\
  \(\mathcal P([n])\), \(\mathcal D_n\) & the subset lattice of \([n]\); its order ideals, the empty one included \\
  \(\Delta\), \(\mathcal S_n\) & simplicial complex on \([n]\) in the strict sense; the set of those \\
  \(S_n\), \(S_n^{+}\)          & \(f\)-vectors of complexes; of non-empty order ideals, \eqref{eq:Splus} \\
  \(f_j(\Delta)\)              & number of \(j\)-element faces \\
  \(\vec\Delta\), \(\vec s\)   & an \(r\)-vector of complexes; its \emph{ff}-vector \\
  \(\partial\)                 & the shadow operator \\
  \(\mu_j=x_j/\binom nj\)      & the LYM ratios \\
  \addlinespace
  \multicolumn{2}{@{}l}{\emph{Geometry}}\\
  \(\vec a=(a_1,\dots,a_n)\)        & the leg vector, of length \(n\); not indexed by a summand \\
  \(\rasimp(\vec a)\)          & right-angle simplex, vertices \(v_j=(a_1,\dots,a_j,0,\dots,0)\) \\
  \(\kozvec(n)\)               & the Kozlov vector, \(\bigl(\binom n1,\dots,\binom nn\bigr)\) \\
  \(\Koz(n)=\conv(S_n)\subseteq\mathbb R^{n+1}\) & the Kozlov simplex \\
  \(\proj(\Koz(n))=\rasimp(\kozvec(n))\) & its image in \(\mathbb R^n\) \\
  \(\vec d=(d_1,\dots,d_r)\)   & the shift vector, sorted; \(\shift^d\) prepends \(d\) zeros \\
  \(d\)                        & a \emph{scalar}: the rank-two case \(\vec d=(0,d)\) \\
  \(m=n-d\)                    & shared coordinates, rank two only \\
  \(N=n+\max\vec d\)           & the ambient dimension \\
  \(\RA(\vec a;\vec d)\)       & the right-angle polytope, \(\sum_i\shift^{d_i}\rasimp(\vec a)\) \\
  \(\Koz(n;\vec d)\) & the Kozlov polytope, \(\sum_i\shift^{d_i}\Koz(n)\) \\
  \(\proj(\Koz(n;\vec d))\) & its image, a translate of \(\RA(\kozvec(n);\vec d)\subseteq\mathbb R^N\), \eqref{eq:koz-as-ra} \\
  \(\proj\)                     & forgets the degree-\(0\) entry, in whatever dimension \\
  \(\extten(\vec a;d)[i][j]\)       & the extremal matrix; \(t=i-d\) \\
  \(v_i\), \(u_j=\shift^d(v_j)\) & vertices of the unshifted and the shifted summand, rank two \\
  \(h(u,\cdot)\), \(F(\varphi,\cdot)\) & support function; exposed face \\
  \addlinespace
  \multicolumn{2}{@{}l}{\emph{Appendix~\ref{sec:app} only}}\\
  \(\varphi_c\), \(\Phi\)      & a functional's \(c\)-th coordinate; its partial sums \\
  \(e_s=\varphi_{d+s}\)        & the functional restricted to the shared block \\
  \(P_{j'}\), \(Q_{t'}\)       & shared-block contributions, \eqref{eq:PQ} \\
  \(\rho_s=a_{d+s}/a_s\)       & ratio by which the two summands weight a shared coordinate \\
  \bottomrule
\end{tabular}
\end{center}
\endgroup

Four collisions are worth flagging, three of them against standard usage and one internal.

An \(f\)-vector here is always the face count of a \emph{simplicial complex}, hence a lattice point of
\(\Koz(n)\); the face numbers of a \emph{polytope} are the other standard use of that phrase, and where
they are needed we write \emph{polytopal face-vector} instead. A bare \(d\) is a scalar and signals rank two,
whereas \(\vec d\) is the shift vector of any rank; \(m=n-d\) inherits that restriction. Two
coordinate systems are in play and the shift operator is linear only in one of them: the
degree-\(0\)-inclusive coordinates of Definition \ref{def-shift} retain the leading \(1\), while
\(\rasimp(\vec a)\) and \(\RA(\vec a;d_1,\dots,d_r)\) drop it; Section \ref{sec-vertices} records the
translation between them.
The leg vector \(\vec a\) is the only vector here indexed by \(1,\dots,n\) rather than by a summand,
but it carries an arrow like every other vector; what distinguishes it is its length, \(n\) rather
than \(r\). The internal collision is the letter \(M\), which denotes both a submodule of \(F\) and
the extremal tensor of Definition \ref{def-extten}; the two never occur in the same argument.
\section{The exterior algebra, and Hilbert functions}
\label{sec-setup}
Throughout, \(k\) is a field. Nothing below requires a hypothesis on \(\operatorname{char}k\); where
\(\gin\) is used, \(k\) is assumed infinite.

The notation is collected in Section \ref{sec-notation}; only one thing about it governs how the
statements below are read. An arrow marks an \(r\)-tuple (\(\vec d\), \(\vec\jmath\),
\(\vec\Delta\)), so that a bare \(d\) is the single scalar shift of the rank-two case
\(\vec d=(0,d)\), and a statement written with \(\vec d\) is one about general rank. Section
\ref{sec-vertices} says at the head of each subsection which of the two regimes it is in.

\begin{definition}
Let \(V=k^n\) have basis \(e_1,\dots,e_n\). The \textbf{exterior algebra} is
\begin{equation}\label{eq:exterior}
  E \;=\; E_n \;=\; \bigwedge V \;=\; \frac{k\langle e_1,\dots,e_n\rangle}
       {\bigl(e_ie_j+e_je_i,\ e_i^2\bigr)}.
\end{equation}
Write \([n]:=\lbrace1,\dots,n\rbrace\), and for a subset
\(\sigma=\lbrace s_1<\cdots<s_j\rbrace\) of \([n]\) put
\begin{equation}\label{eq:exterior-1}
  e_\sigma\;=\;e_{s_1}\wedge\cdots\wedge e_{s_j}.
\end{equation}
\label{def-exterior}
\end{definition}

The \(e_\sigma\) form a \(k\)-basis, \(E\) is \(\mathbb Z_{\ge0}\)-graded with
\(\dim_kE_j=\binom nj\), and \(\dim_kE=2^n\). The algebra \(E\) is graded-commutative, so a one-sided graded ideal
is automatically two-sided, and the canonical identification \(az=(-1)^{\deg a\deg z}za\) makes the
left- and right-module categories agree. We work with left modules throughout, writing
\(e_\sigma g_i\) and never \(g_ie_\sigma\).

\begin{definition}
Fix \(r\ge1\) and degrees \(d_1,\dots,d_r\in\mathbb Z\). Let
\begin{equation}\label{eq:free-module}
  F \;=\; \bigoplus_{i=1}^{r}E(-d_i) \;=\; \bigoplus_{i=1}^{r}Eg_i,\qquad \deg g_i=d_i.
\end{equation}
We say \(F\) has \textbf{rank \(r\)}. For a graded submodule \(M\subseteq F\), the \textbf{Hilbert function} of the
quotient is \(H_{F/M}(j)=\dim_k(F/M)_j\) and the \textbf{Hilbert series} is
\(H_{F/M}(t)=\sum_jH_{F/M}(j)t^j\), a Laurent polynomial supported in degrees \(d_1,\dots,d_r+n\).
Note that the Hilbert function is always that of the \textbf{quotient}: we never use the convention, common
in computer-algebra systems and in Macaulay2 in particular, under which \(\mathrm{Hilb}(I)\) means
\(\mathrm{Hilb}(E/I)\).
\label{def-free-module}
\end{definition}

"Rank" always counts free generators, never minimal generators of an ideal. Reordering the \(g_i\) is
a relabelling of the basis, so we assume throughout \(d_1\le d_2\le\cdots\le d_r\), and, after
Corollary \ref{cor-normalization}, usually \(d_1=0\).

\begin{remark}
Both \(I\) and \(E/I\) are graded \(E\)-modules, but only \(E/I\) is \emph{cyclic}: it is generated by
\(1+I\), whereas \(I\) is generated as an ideal by its minimal generators, of which there is usually
more than one and none redundant. The distinction is worth flagging because "graded ideal" and
"cyclic module" are easy to conflate; they coincide only when \(I\) is principal.
\label{rem-cyclic}
\end{remark}

The following is standard, and is what makes the combinatorics below describe \emph{all} graded
submodules rather than only the monomial ones. Fix a monomial order \(>\) on the monomials
\(e_\sigma g_i\) of \(F\). For a graded submodule \(M\subseteq F\), let \(\inn_>(M)\) be the
\(k\)-span of the leading monomials of the elements of \(M\); for \(k\) infinite let
\(\gin_>(M)=\inn_>(g\cdot M)\) for \(g\) in the dense open subset of \(\mathrm{GL}_n(k)\) on which
this is constant. Here \(g\) acts linearly on \(V=k^n\), hence on the exterior algebra
\(E=\bigwedge V\) by functoriality, hence diagonally on the free module \(F=\bigoplus_iEg_i\) with
the generators \(g_i\) fixed; \(g\cdot M\) is the image of \(M\) under that action, again a graded
submodule.

\begin{theorem}
\cite{AramovaHerzog2000,AramovaHerzogHibi1997,Eisenbud1995,HerzogHibi2011}
Let \(k\) be a field, let \(F\) be as in Definition \ref{def-free-module}, let \(M\subseteq F\) be a
graded submodule and let \(>\) be a monomial order on \(F\).
\begin{enumerate}
\item \(H_{F/M}(t)=H_{F/\inn_>(M)}(t)\). No hypothesis on \(k\) is needed.
\item If in addition \(k\) is infinite --- so that \(\gin_>(M)\) is defined --- then also
\(H_{F/M}(t)=H_{F/\gin_>(M)}(t)\).
\end{enumerate}
\label{thm-invariance}
\end{theorem}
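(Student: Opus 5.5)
For part (1), the plan is to show degree by degree that the monomials of \(F\) not lying in \(\inn_>(M)\) project to a \(k\)-basis of \((F/M)_j\). Since \(M\) is graded, its leading monomials are all leading monomials of homogeneous elements: take leading terms of the homogeneous components, using that a monomial order on \(F\) refines, or at least is compatible with, the grading, or else restrict to homogeneous elements from the start. So \(\inn_>(M)_j\) is spanned by leading monomials of elements of \(M_j\). First I check that \(\inn_>(M)\) is a submodule of \(F\). For \(f\in M\) with leading term \(c\,e_\sigma g_i\) and \(\tau\cap\sigma=\emptyset\), the product \(e_\tau f\) lies in \(M\), and its leading term is \(\pm c\,e_{\tau\cup\sigma}g_i\) by the multiplicativity axiom of a monomial order on \(F\). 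If \(\tau\cap\sigma\ne\emptyset\), then \(e_\tau e_\sigma g_i=0\) already, so it lies in \(\inn_>(M)\) trivially. This sign-and-zero bookkeeping is the only place the exterior setting differs from the polynomial one.

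Next I show \(\dim_k M_j=\dim_k\inn_>(M)_j\) for each \(j\), which gives part (1) because \(F_j\) is finite-dimensional. Gaussian elimination on a basis of \(M_j\), written in the monomial basis \(e_\sigma g_i\) of \(F_j\) ordered by \(>\), produces a basis with pairwise distinct leading monomials. Those leading monomials span \(\inn_>(M)_j\): any element of \(M_j\) has the same leading monomial as one of these basis elements, because the elimination basis is in echelon form. The leading monomials are linearly independent because they are distinct basis monomials. So \(\dim M_j=\dim\inn_>(M)_j\), and therefore \(H_{F/M}(j)=\dim F_j-\dim M_j=H_{F/\inn_>(M)}(j)\). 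No hypothesis on \(k\) enters.

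For part (2), each \(g\in\mathrm{GL}_n(k)\) acts on \(F\) as a graded \(k\)-linear automorphism. It preserves each \(F_j\), since it maps \(E_\ell=\bigwedge^\ell V\) to itself and fixes the \(g_i\). Hence \(\dim (g\cdot M)_j=\dim M_j\), so \(H_{F/g\cdot M}=H_{F/M}\). Applying part (1) to the graded submodule \(g\cdot M\) for \(g\) in the open set where \(\inn_>(g\cdot M)=\gin_>(M)\) gives the claim. The only real obstacle is the existence and constancy of \(\gin\) on a dense open set. That is assumed by the definition preceding the statement, so I would cite it rather than reprove it, noting only that infinitude of \(k\) is what makes that open set nonempty.
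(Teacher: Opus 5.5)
Your proof is correct and follows the paper's argument. For (1), an echelon basis of \(M_j\) has pairwise distinct leading monomials, which gives \(\dim_k\inn_>(M)_j=\dim_kM_j\); for (2), the graded automorphism induced by \(g\) reduces everything to (1) applied to \(g\cdot M\), and your check that \(\inn_>(M)\) is a submodule is an extra the paper leaves implicit. One small correction: you need no compatibility between \(>\) and the grading, and in general there is none (lex has \(e_1>e_2e_3\)); since monomials of different degrees are distinct, the leading monomial of \(f=\sum_jf_j\in M\) is simply the largest of the leading monomials of its homogeneous components \(f_j\in M_j\).
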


\begin{proof}
(1) In each degree \(j\), choose a \(k\)-basis of \(M_j\) in echelon form with respect to \(>\); its
leading monomials are then pairwise distinct and
span \(\inn_>(M)_j\), so \(\dim_k\inn_>(M)_j=\dim_kM_j\), and therefore \(\dim_k(F/M)_j\) and
\(\dim_k(F/\inn_>(M))_j\) agree.

(2) Each \(g\in\mathrm{GL}_n(k)\) induces a degree-preserving automorphism of \(E\), hence a graded
isomorphism \(M\cong g\cdot M\), so \(H_{F/M}=H_{F/g\cdot M}\); now apply (1) to \(g\cdot M\) for a
\(g\) in the dense open subset defining \(\gin_>\). Infiniteness of \(k\) is used only to know that
subset is non-empty.
\end{proof}

The consequence we use is that \emph{nothing about which Hilbert functions occur is lost by restricting
attention to monomial submodules}. That is the reason the combinatorics below is not a special case.
\section{Rank one: simplicial complexes and the exterior face ring}
\label{sec-rank1}
Take \(r=1\), \(d_1=0\), so \(F=E\) and a graded submodule is a graded ideal \(I\subseteq E\).

\begin{definition}
Write \(\mathcal P([n])\) for the lattice of subsets of \([n]\), and \(\mathcal D_n\) for the set of
\textbf{order ideals} of \(\mathcal P([n])\): the families \(\Delta\subseteq\mathcal P([n])\) with
\(\tau\subseteq\sigma\in\Delta\Rightarrow\tau\in\Delta\). \textbf{\textbf{The empty order ideal
\(\Delta=\emptyset\) belongs to \(\mathcal D_n\)}}, the condition holding vacuously, and so do the
order ideals that omit some singleton.

A \textbf{simplicial complex on \([n]\)} is an order ideal \(\Delta\in\mathcal D_n\) that contains
\(\emptyset\) and every singleton \(\lbrace i\rbrace\), \(i\in[n]\). Its
\textbf{\(f\)-vector} is \(\bigl(f_0(\Delta),f_1(\Delta),\dots,f_n(\Delta)\bigr)\) with
\begin{equation}\label{eq:complex}
  f_j(\Delta)\;=\;\bigl|\lbrace\sigma\in\Delta:|\sigma|=j\rbrace\bigr| .
\end{equation}
Write \(\mathcal S_n\) for the set of simplicial complexes on \([n]\).
\label{def-complex}
\end{definition}

For \(\Delta\in\mathcal S_n\) the two clauses give \(f_0(\Delta)=1\) and \(f_1(\Delta)=n\); for a
general order ideal neither holds, and \(f_0(\emptyset)=0\). We use the \textbf{cardinality} convention for
\(f\)-vectors throughout, matching Kozlov; the dimension convention differs by an index shift.

\textbf{Which convention.} Three mutually inequivalent definitions of "simplicial complex on the ground set
\([n]\)" are in use, and every statement below depends on which is taken, so we name them once:
(i) all order ideals of \(\mathcal P([n])\), the empty one included; (ii) the non-empty order ideals,
equivalently those containing \(\emptyset\); (iii) the order ideals containing \(\emptyset\) and
every singleton. \textbf{This paper uses (iii)}, which is Kozlov's convention and the most common one, and
reserves \(\mathcal S_n\) for it; where the wider classes are wanted they are named as what they are,
\(\mathcal D_n\) for (i) and "non-empty order ideal" for (ii). Theorem \ref{thm-kozlov} is true under
(iii) and false under (ii) --- see Section \ref{sec-permissive}, which computes the hull in that case as
well --- so a reader importing a different convention gets a false statement rather than a weaker one.

\begin{definition}
For an order ideal \(\Delta\in\mathcal D_n\), the \textbf{Stanley--Reisner ideal} is
\begin{equation}\label{eq:sr-ideal}
  I_\Delta\;=\;\bigl(e_\sigma\ :\ \sigma\notin\Delta\bigr)\ \subseteq\ E,
\end{equation}
and the quotient \(E/I_\Delta\) is the \textbf{exterior face ring}, also called Sköldberg's \textbf{indicator
algebra} \(k\lbrace\Delta\rbrace\).
\label{def-sr-ideal}
\end{definition}

Non-faces are upward closed, so \(I_\Delta\) is a graded ideal.

\begin{proposition}
\begin{enumerate}
\item The surviving monomials of \(E/I_\Delta\) are exactly the \(e_\sigma\) with \(\sigma\in\Delta\),
so \(H_{E/I_\Delta}=(f_0(\Delta),f_1(\Delta),\dots,f_n(\Delta))\) with no correction term.
\item The assignment \(\Delta\mapsto I_\Delta\) is a bijection from \(\mathcal D_n\) to the monomial
ideals of \(E\), the empty order ideal corresponding to the unit ideal \(I_\emptyset=E\).
\item As sets of integer sequences,
\[
     \lbrace H_{E/I}: I\subseteq E\ \text{graded}\rbrace
     \;=\;\lbrace (f_0(\Delta),f_1(\Delta),\dots,f_n(\Delta)): \Delta\in\mathcal D_n\rbrace ,
   \]
where \(f_0(\Delta)=1\) for every \(\Delta\ne\emptyset\) and \(f_0(\emptyset)=0\); the single
sequence with \(f_0=0\) is the zero function, \(H_{E/E}\).
\end{enumerate}
\label{prop-dictionary}
\end{proposition}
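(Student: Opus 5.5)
The plan is to prove the three parts in order, each resting on the multiplication rule for monomials in \(E\): \(e_\tau e_\sigma=\pm e_{\tau\cup\sigma}\) if \(\tau\cap\sigma=\emptyset\), and \(0\) otherwise.

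For (1), first describe \(I_\Delta\) as a vector space. Since non-faces are upward closed, \(I_\Delta\) is the \(k\)-span of the monomials \(e_\rho\) with \(\rho\notin\Delta\). To see this, note that any product \(e_\tau e_\sigma\) with \(\sigma\notin\Delta\) is either \(0\) or \(\pm e_{\tau\cup\sigma}\), and \(\tau\cup\sigma\supseteq\sigma\) is again a non-face. So the span of the non-face monomials is already an ideal, it contains the generators, and it is contained in \(I_\Delta\). Hence \(E/I_\Delta\) has \(k\)-basis the images of \(e_\sigma\) with \(\sigma\in\Delta\). Counting by degree gives \(\dim_k(E/I_\Delta)_j=f_j(\Delta)\) exactly.

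For (2), first show that every monomial ideal \(I\) of \(E\) is spanned by the monomials it contains. This is the standard fact that a monomial ideal is spanned by monomial multiples of its generators, and each such multiple is \(\pm\) a monomial or \(0\). The set \(\lbrace\sigma: e_\sigma\in I\rbrace\) is then upward closed by the same product rule, so its complement \(\Delta\) is an order ideal and \(I=I_\Delta\). Conversely, the description of \(I_\Delta\) in (1) shows that \(I_\Delta\) determines \(\Delta\), namely \(\Delta=\lbrace\sigma: e_\sigma\notin I_\Delta\rbrace\). This gives injectivity. For \(\Delta=\emptyset\), every \(\sigma\) is a non-face, including \(\sigma=\emptyset\) with \(e_\emptyset=1\), so \(I_\emptyset=E\).

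For (3), the inclusion "\(\supseteq\)" is (1) applied to \(I=I_\Delta\). For "\(\subseteq\)", take a graded \(I\) and fix any monomial order on \(E\), which is the case \(r=1\) of Theorem \ref{thm-invariance}(1). Then \(H_{E/I}=H_{E/\inn_>(I)}\). The one point needing care is that \(\inn_>(I)\) is a monomial ideal and not just a monomial subspace. To verify it, take \(f\in I\) with leading monomial \(e_\rho\) and any \(e_\tau\). If \(\tau\cap\rho=\emptyset\), compatibility of the order with multiplication makes \(\pm e_{\tau\cup\rho}\) the leading monomial of \(e_\tau f\). If \(\tau\cap\rho\neq\emptyset\), the product \(e_\tau e_\rho\) is \(0\), so nothing needs checking. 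Then (2) writes \(\inn_>(I)=I_\Delta\), and (1) finishes.

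For the last clause, \(f_0(\Delta)=1\) exactly when \(\emptyset\in\Delta\). Every non-empty order ideal contains \(\emptyset\), since it is a subset of any member. If \(\Delta=\emptyset\), all \(f_j\) vanish, and this corresponds to \(I=E\).

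The only step with any real content is the monomiality of \(\inn_>(I)\), which handles the zero-divisor issue in the exterior algebra. It is short once the product rule is on the table.
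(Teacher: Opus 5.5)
Your proposal is correct and follows the paper's proof essentially step for step. Part (1) comes from describing \(I_\Delta\) as the span of the non-face monomials. Part (2) comes from the upward closure of the set of monomials lying in a monomial ideal, with injectivity read off from (1). Part (3) goes through Theorem~\ref{thm-invariance}(1) together with the fact that \(\inn_>(I)\) is a monomial ideal. The only difference is that you explicitly verify that \(\inn_>(I)\) is an ideal, and you also check the \(f_0\) clause. The paper takes both as known, and your verification of each is sound.
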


\begin{proof}
(1) is immediate from the definition, since \(I_\Delta\) is spanned as a \(k\)-vector space by the
\(e_\sigma\) with \(\sigma\notin\Delta\). For (2), every monomial of \(E\) is squarefree because \(e_i^2\) is a relation, so the monomials outside a monomial ideal \(I\) form an order ideal
\(\Delta\in\mathcal D_n\) with \(I=I_\Delta\); injectivity is (1). (3): "\(\supseteq\)" is (2);
"\(\subseteq\)" follows from Theorem \ref{thm-invariance}, since \(\inn(I)\) is monomial, hence some
\(I_\Delta\).
\end{proof}

\begin{remark}
Three graded \(k\)-algebras attach to the same \(\Delta\), all with underlying graded vector space
spanned by \(\lbrace x_\sigma:\sigma\in\Delta\rbrace\) but with three genuinely different
multiplications: the ordinary Stanley--Reisner ring
\(k[\Delta]=k[x_1,\dots,x_n]/I^{\mathrm{sym}}_\Delta\) (typically infinite-dimensional); the exterior
face ring \(k\lbrace\Delta\rbrace=E/I_\Delta\) (finite-dimensional, skew-commutative); and the
\textbf{artinified Stanley--Reisner ring} \(k[\Delta]/(x_1^2,\dots,x_n^2)\) (finite-dimensional,
commutative). Sköldberg \autocite{Skoldberg1999} proves that the latter two have identical Hilbert functions and, more
strongly, that their module categories are equivalent by an explicit sign-twisting functor commuting
with \(\operatorname{Ext}\).
\label{rem-three-rings}
\end{remark}

For \(a,j\ge1\) there is a unique \(j\)-cascade expansion
\begin{equation}\label{eq:three-rings}
  a=\binom{a_j}{j}+\binom{a_{j-1}}{j-1}+\cdots+\binom{a_\ell}{\ell},
  \qquad a_j>a_{j-1}>\cdots>a_\ell\ge\ell\ge1,
\end{equation}
and one sets
\begin{equation}\label{eq:three-rings-2}
  a^{(j)}\;=\;\binom{a_j}{j+1}+\binom{a_{j-1}}{j}+\cdots+\binom{a_\ell}{\ell+1},
\end{equation}
with the convention \(0^{(j)}=0\).

\begin{theorem}
\autocite{ClementsLindstroem1969,Katona1968,Kruskal1963}, in this form
Aramova, Herzog and Hibi~\cite[Theorem 4.1]{AramovaHerzogHibi1997}. A sequence \((1,h_1,\dots,h_n)\) of non-negative integers is the Hilbert function of
\(E/I\) for some graded ideal \(I\subseteq E\) if and only if\footnote{The lower bound is \(0\le h_{j+1}\), not \(0<h_{j+1}\). This matters: the printed statement of Amata--Crupi's Theorem 3.1, which quotes Aramova--Herzog--Hibi, has \(<\) where the source has \(\le\), and as a result formally denies that \((1,3,3,0)\), the Hilbert function of \(E_3/(e_1e_2e_3)\), is a Hilbert function at all.}
\begin{equation}\label{eq:kk}
  h_1\le n \qquad\text{and}\qquad 0\le h_{j+1}\le h_j^{(j)}\quad (1\le j\le n-1).
\end{equation}
\label{thm-kk}
\end{theorem}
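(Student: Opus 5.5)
The plan is to reduce Theorem \ref{thm-kk} to the classical Kruskal--Katona theorem for \(f\)-vectors of order ideals, using the dictionary of Proposition \ref{prop-dictionary}. By part (3) of that proposition, a sequence \((1,h_1,\dots,h_n)\) is the Hilbert function of some \(E/I\) if and only if it is the \(f\)-vector of a \emph{non-empty} order ideal \(\Delta\in\mathcal D_n\). The leading \(1\) excludes \(\Delta=\emptyset\), whose Hilbert function is the zero function \(H_{E/E}\). So the statement becomes purely combinatorial: characterize the \(f\)-vectors of non-empty order ideals of \(\mathcal P([n])\).

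\emph{Necessity.} Since \(f_1(\Delta)\) counts singletons of \([n]\), we get \(h_1\le n\) immediately. For the shadow inequality, write \(\Delta_{j+1}\) for the \((j+1)\)-element faces. The shadow \(\partial\Delta_{j+1}\) consists of the \(j\)-subsets of members of \(\Delta_{j+1}\), and it lies in \(\Delta_j\) because \(\Delta\) is an order ideal. Hence \(|\partial\Delta_{j+1}|\le h_j\). The Kruskal--Katona shadow bound says that a family of \(m\) sets of size \(j+1\) has shadow of size at least \(\partial_{j+1}(m)\), the shadow of the first \(m\) sets in colex order. A routine inversion of cascade expansions then shows that \(\partial_{j+1}(m)\le h_j\) forces \(m\le h_j^{(j)}\). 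Concretely, \(h_j^{(j)}\) is the largest \(m\) whose colex shadow fits inside the first \(h_j\) \(j\)-sets in colex order. Nonnegativity is trivial.

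\emph{Sufficiency.} Given a sequence satisfying \eqref{eq:kk}, let \(C_j\) be the first \(h_j\) \(j\)-subsets of \([n]\) in colex order, and set \(\Delta=\bigcup_j C_j\). This is possible because \(h_j\le\binom nj\), which follows inductively from \(h_1\le n\) and \(h_{j+1}\le h_j^{(j)}\le\binom{n}{j}^{(j)}=\binom n{j+1}\) (monotonicity of \(a\mapsto a^{(j)}\)). The key fact is that the shadow of an initial colex segment is an initial colex segment, of size \(\partial_{j+1}(h_{j+1})\). The hypothesis \(h_{j+1}\le h_j^{(j)}\) gives \(\partial_{j+1}(h_{j+1})\le h_j\), so \(\partial C_{j+1}\subseteq C_j\). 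Hence \(\Delta\) is an order ideal, it is non-empty since it contains \(\emptyset\), and its \(f\)-vector is \((1,h_1,\dots,h_n)\). A zero entry \(h_{j+1}=0\) is allowed throughout, which is exactly why the lower bound must be \(\le\).

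The \textbf{main obstacle} is the Kruskal--Katona shadow minimization itself: that colex initial segments minimize shadow. I do not intend to reprove it. The theorem is cited (Kruskal, Katona, Clements--Lindström), and the form here is Aramova--Herzog--Hibi's \cite[Theorem 4.1]{AramovaHerzogHibi1997}. The proof will therefore consist of the translation through Proposition \ref{prop-dictionary} together with the cascade bookkeeping. If a self-contained argument were wanted, I would use the standard compression (shifting) proof, which replaces \(\Delta_{j+1}\) by a shifted family of the same size without increasing its shadow.
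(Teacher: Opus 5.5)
Your proposal is correct and follows essentially the paper's route: the paper gives no proof of Theorem~\ref{thm-kk}, only a citation, and what its set-up relies on is exactly your reduction through Proposition~\ref{prop-dictionary}(3) to the Kruskal--Katona theorem for $f$-vectors of non-empty order ideals, with the colex shadow-minimization cited rather than reproved. Your cascade bookkeeping is the standard one, namely that $h_j^{(j)}$ is the largest $m$ with $\partial_{j+1}(m)\le h_j$, together with the colex construction for sufficiency; and because Proposition~\ref{prop-dictionary} passes through $\inn_>$ rather than $\gin$, your argument needs no hypothesis on $k$.
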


\begin{definition}
A Hilbert function \((1,h_1,\dots,h_n)\) of \(E/I\) is \textbf{proper} if \(h_1=n\), equivalently if \(I\)
has no minimal generator of degree \(\le1\), equivalently if the corresponding \(\Delta\) uses the
full vertex set. Write
\begin{equation}\label{eq:proper}
  S_n\;=\;\bigl\lbrace(1,f_1(\Delta),\dots,f_n(\Delta)):\Delta\in\mathcal S_n\bigr\rbrace
   \;\subseteq\;\mathbb Z^{n+1},
\end{equation}
a finite set, every element of which begins \((1,n,\dots)\).
\label{def-proper}
\end{definition}

By Proposition \ref{prop-dictionary} and Definition \ref{def-proper}, \(S_n\) is exactly the set of
proper Hilbert functions. The map \(\Delta\mapsto\) its \(f\)-vector is surjective onto \(S_n\) by
construction, but it is \textbf{not} injective: already for \(n=3\) the nine proper complexes realise only
five distinct \(f\)-vectors, and \((1,3,1,0)\) is the \(f\)-vector of three of them. What the
present section needs is the equality of sets, not a bijection of complexes onto them.
\section{The Kozlov simplex, with its facets}
\label{sec-kozlov}
The single combinatorial input to this section is the local Lubell--Yamamoto--Meshalkin inequality,
universally abbreviated \textbf{local LYM}. The three are Daniel Lubell, Koichi Yamamoto and Lev
Meshalkin, who proved the global form independently between 1954 and 1966 \autocite{Lubell1966,Meshalkin1963,Yamamoto1954}. Only the local, single-level form is needed.

\begin{lemma}
Let \(\Delta\in\mathcal D_n\) be an order ideal and \(1\le j\le n-1\). Then
\((n-j)f_j(\Delta)\ge(j+1)f_{j+1}(\Delta)\), equivalently
\begin{equation}\label{eq:lym}
  \frac{f_j(\Delta)}{\binom nj}\;\ge\;\frac{f_{j+1}(\Delta)}{\binom n{j+1}}.
\end{equation}
\label{lem:lym}
\label{lem-lym}
\end{lemma}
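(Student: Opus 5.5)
We prove the inequality by double counting the pairs \((\tau,\sigma)\) with \(\tau\subsetneq\sigma\), \(|\tau|=j\), \(|\sigma|=j+1\), and \(\sigma\in\Delta\). Call this set \(P\).

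\textbf{Counting from the top.} Fix a face \(\sigma\in\Delta\) with \(|\sigma|=j+1\). It has exactly \(j+1\) subsets \(\tau\) of size \(j\). Every such \(\tau\) lies in \(\Delta\), because \(\Delta\) is an order ideal. Hence \(|P|=(j+1)f_{j+1}(\Delta)\), and every pair in \(P\) has its lower element in \(\Delta\).

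\textbf{Counting from the bottom.} Fix a face \(\tau\in\Delta\) with \(|\tau|=j\). Its supersets of size \(j+1\) are \(\tau\cup\{x\}\) for \(x\in[n]\setminus\tau\), so there are exactly \(n-j\) of them. At most \(n-j\) of these lie in \(\Delta\). Summing over \(\tau\) gives \(|P|\le(n-j)f_j(\Delta)\).

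Comparing the two counts yields \((n-j)f_j(\Delta)\ge(j+1)f_{j+1}(\Delta)\).

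\textbf{The ratio form.} The identity
\[
\binom n{j+1}=\frac{n-j}{j+1}\binom nj
\]
holds for \(1\le j\le n-1\). Dividing the inequality by \((j+1)\binom nj>0\) therefore turns it into \eqref{eq:lym}.

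There is no real obstacle here. The only points to keep in view are that the order-ideal property is used only in the top count, and that the argument holds for every \(\Delta\in\mathcal D_n\), including the empty order ideal (where both sides are zero) and order ideals that omit singletons. No properness is needed at this stage; it enters only later, in Kozlov's theorem.
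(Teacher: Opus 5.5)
Your double count of the pairs $(\tau,\sigma)$ is the paper's own proof, and it correctly gives $(n-j)f_j(\Delta)\ge(j+1)f_{j+1}(\Delta)$. The last step has a small slip: dividing by $(j+1)\binom nj$ does not produce \eqref{eq:lym}, whereas dividing by $(n-j)\binom nj=(j+1)\binom n{j+1}$ does (this is the divisor the paper uses, and it is positive since $j\le n-1$).
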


\begin{proof}
Count pairs \((\tau,\sigma)\) with \(\tau\subset\sigma\), \(\sigma\in\Delta\), \(|\sigma|=j+1\),
\(|\tau|=j\). Each such \(\sigma\) contributes exactly \(j+1\) pairs, and all of its \(j\)-subsets
lie in \(\Delta\) by downward closure, so the number of pairs is exactly \((j+1)f_{j+1}\). Each
\(\tau\in\Delta\) with \(|\tau|=j\) is contained in exactly \(n-j\) subsets of \([n]\) of size
\(j+1\), of which at most all lie in \(\Delta\), so the number of pairs is at most \((n-j)f_j\). The
normalized form follows by dividing \((n-j)f_j\ge(j+1)f_{j+1}\) through by
\((j+1)\binom n{j+1}\) and using \(\binom n{j+1}=\binom nj\,(n-j)/(j+1)\), so that
\begin{equation}\label{eq:lym-2}
  \frac{f_{j+1}}{\binom n{j+1}}\;=\;\frac{f_{j+1}(j+1)}{\binom nj\,(n-j)}
  \;\le\;\frac{f_j}{\binom nj}.
\end{equation}
\end{proof}

For \(j=1,\dots,n\), write \(\tilde F_j\) for the \(f\)-vector of the complete \(j\)-skeleton on
\([n]\), with the degree-\(0\) entry dropped:\footnote{The notation is Kozlov's, but the
tilde is not doing here what it does there, and the difference is worth one line. In
\cite{Kozlov1997} the tilde separates the simplicial-complex case, \(\tilde F_k(n)\) of his
\S4, from the flag-complex case, \(F_r(n)\) of his \S3; it says nothing about a degree-\(0\)
entry. (In the 1996 technical report that became that paper, both were written \(F_k(n)\), and
the tilde was added to separate them.) His \(f\)-vector is indexed by dimension, so it begins at
\(f_0=\binom n1\) and the empty face never appears; ours is indexed by cardinality, so it begins
at \(f_0=1\). The entry we drop is an artefact of that difference, not of his notation. We keep
his glyph for recognisability.}
\begin{equation}\label{eq:skeleton}
  \tilde F_j\;=\;\Bigl(\tbinom n1,\ \tbinom n2,\ \dots,\ \tbinom nj,\ 0,\ \dots,\ 0\Bigr)\in\mathbb R^n .
\end{equation}

A word on the word \textbf{coordinates}, which this paper uses for two unrelated operations and which is
worth disentangling once. One is the passage between \(\mathbb R^{n+1}\) and \(\mathbb R^n\) that
drops or restores the degree-\(0\) entry --- a projection and its section, written throughout with
the map \(\proj\) of Section \ref{sec-notation}. The other, unrelated to it, is the \textbf{rescaling} below, which
divides the \(j\)-th entry by \(\binom nj\) and does not change the ambient space at all. Where
both could be meant we name the one intended.

The whole section rests on that rescaling, which we isolate first --- as a definition and a lemma
about it --- so that the theorem and its corollary can each be proved in a line.

\begin{definition}
(\emph{LYM ratios.}) For \(x\in\mathbb R^n\) the \textbf{LYM ratios} of \(x\) are
\begin{equation}\label{eq:lym-ratios}
  \mu_j(x)\;=\;\frac{x_j}{\binom nj}\qquad(1\le j\le n),
\end{equation}
and we write \(\mu_j\) for \(\mu_j(x)\) when \(x\) is clear. The map \(x\mapsto\mu(x)\) is a
linear automorphism of \(\mathbb R^n\): it rescales each coordinate by a non-zero factor and moves
nothing between coordinates.
\label{def:lym-ratios}
\label{def-lym-ratios}
\end{definition}

\begin{lemma}
(\emph{The LYM chain.}) With the LYM ratios \(\mu_j\) of Definition \ref{def-lym-ratios},
\begin{equation}\label{eq:mu}
  \conv\lbrace\tilde F_1,\dots,\tilde F_n\rbrace
  \;=\;\lbrace x\in\mathbb R^n:\ \mu_1=1\ge\mu_2\ge\cdots\ge\mu_n\ge0\rbrace .
\end{equation}
Moreover the points \(\tilde F_1,\dots,\tilde F_n\) are affinely independent, so this set is an
\((n-1)\)-dimensional simplex; and for \(n\ge2\) its facets are cut out by the \(n\) inequalities
\(\mu_j\ge\mu_{j+1}\) for \(1\le j\le n-1\), together with \(\mu_n\ge0\).
\label{lem:mu}
\label{lem-mu}
\end{lemma}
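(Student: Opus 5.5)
Since \(x\mapsto\mu(x)\) is a linear automorphism of \(\mathbb R^n\) (Definition \ref{def-lym-ratios}), it preserves convex hulls, affine independence and facets. I would therefore work entirely in \(\mu\)-coordinates. There \(\tilde F_j\) becomes the vector \(w_j=(1,\dots,1,0,\dots,0)\), with \(j\) ones. The claim then reduces to a statement about the chain of "initial-segment indicator" vectors: \(\conv\lbrace w_1,\dots,w_n\rbrace\) equals the set \(P=\lbrace \mu:\mu_1=1\ge\mu_2\ge\cdots\ge\mu_n\ge0\rbrace\).

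The key tool is the \textbf{difference map} \(\delta_j=\mu_j-\mu_{j+1}\) for \(1\le j\le n-1\), together with \(\delta_n=\mu_n\). It is a linear bijection, and its inverse is \(\mu_j=\sum_{i\ge j}\delta_i\). Under it, \(w_j\) goes to the standard basis vector \(\varepsilon_j\).

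For the inclusion \(\subseteq\): each \(w_j\) lies in \(P\), and \(P\) is convex, being an intersection of half-spaces with one hyperplane.

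For \(\supseteq\): given \(\mu\in P\), set \(\delta_j\) as above. The inequalities defining \(P\) say exactly that \(\delta_j\ge0\) for all \(j\). The condition \(\mu_1=1\) says exactly that \(\sum_j\delta_j=1\). The identity \(\mu=\sum_j\delta_j w_j\) is verified coordinatewise from \(\mu_i=\sum_{j\ge i}\delta_j\). Hence \(\mu\) is a convex combination of the \(w_j\).

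Affine independence also comes for free. The \(w_j\) map to \(\varepsilon_1,\dots,\varepsilon_n\), which are linearly independent, hence affinely independent. So \(P\) is an \((n-1)\)-simplex, namely the image of the standard simplex \(\lbrace\delta\ge0,\ \sum\delta_j=1\rbrace\).

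For the facets, the facets of the standard simplex are \(\delta_j=0\), \(j=1,\dots,n\), since the simplex is \((n-1)\)-dimensional with \(n\) vertices. Each facet is the convex hull of all vertices but one. These are pairwise distinct and nonempty precisely when \(n\ge2\); for \(n=1\) the simplex is a point and the facet language degenerates, which is why the hypothesis is needed. Pulling back, the inequalities \(\delta_j\ge0\) are \(\mu_j\ge\mu_{j+1}\) for \(j<n\) and \(\mu_n\ge0\). Each defines the facet opposite \(w_j\), so none is redundant.

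Finally, I transport everything back through \(\mu^{-1}\), using that \(\mu(\tilde F_j)=w_j\) because \(\mu_i(\tilde F_j)=\binom ni/\binom ni=1\) for \(i\le j\) and \(0\) otherwise.

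The only point needing care is the facet claim: confirming that every listed inequality is genuinely facet-defining (irredundant) inside the affine hull \(\mu_1=1\). The barycentric-coordinate identification settles this directly, so I expect no real obstacle. The step to write most carefully is the coordinatewise check of \(\mu=\sum\delta_jw_j\).
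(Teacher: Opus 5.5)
Your proposal is correct and is essentially the paper's proof: the paper also reads off the barycentric coordinates of a point with respect to \(\tilde F_1,\dots,\tilde F_n\) as \(\lambda_j=\mu_j-\mu_{j+1}\) for \(j<n\), \(\lambda_n=\mu_n\), with \(\mu_1=1\) encoding \(\sum_j\lambda_j=1\). From that bijection it gets the equality of sets, affine independence and the \(n\) facets, and your difference map onto the standard simplex is the same argument written as an explicit linear isomorphism.
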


\begin{proof}
Consider a convex combination \(x=\sum_{j=1}^n\lambda_j\tilde F_j\), so that \(\lambda_j\ge0\) and
\(\sum_j\lambda_j=1\). Reading off the \(j\)-th coordinate gives \(\mu_j=\sum_{\ell\ge j}\lambda_\ell\),
whence
\begin{equation}\label{eq:mu-2}
  \lambda_j=\mu_j-\mu_{j+1}\quad(j<n),\qquad \lambda_n=\mu_n,\qquad \mu_1=1 .
\end{equation}
The conditions \(\lambda_j\ge0\) and \(\sum_j\lambda_j=1\) therefore translate exactly into the
displayed chain, which proves the equality of the two sets. The translation is a bijection between
the \(\lambda\) and the \(\mu\) description, so the \(\tilde F_j\) are affinely independent and the
set is a simplex of dimension \(n-1\). Each of the \(n\) inequalities vanishes precisely when one
barycentric coordinate does, namely \(\mu_j\ge\mu_{j+1}\) on \(\lambda_j=0\) and \(\mu_n\ge0\) on
\(\lambda_n=0\), so each cuts out a facet.
\end{proof}

\begin{theorem}
Kozlov~\cite[Theorem 4.1]{Kozlov1997} For \(n\ge1\),
\begin{equation}\label{eq:kozlov}
  \conv\bigl\lbrace(f_1(\Delta),\dots,f_n(\Delta)):\Delta\in\mathcal S_n\bigr\rbrace
  \;=\;\conv\lbrace\tilde F_1,\dots,\tilde F_n\rbrace,
\end{equation}
an \((n-1)\)-dimensional simplex.
\label{thm:kozlov}
\label{thm-kozlov}
\end{theorem}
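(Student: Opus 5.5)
The plan is to prove the two inclusions separately. Both sides are convex hulls of finite sets, so each inclusion only needs to be checked on generators. The LYM chain, Lemma \ref{lem-mu}, is the bridge between the two.

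For "\(\supseteq\)": each \(\tilde F_j\) is itself the \(f\)-vector, with the degree-\(0\) entry dropped, of a member of \(\mathcal S_n\). Take the complete \(j\)-skeleton \(\Delta^{(j)}=\lbrace\sigma\subseteq[n]:|\sigma|\le j\rbrace\). It is downward closed, contains \(\emptyset\) and every singleton because \(j\ge1\), and has \(f_i=\binom ni\) for \(i\le j\) and \(f_i=0\) beyond. So each \(\tilde F_j\) lies in the left-hand set, and hence so does its convex hull.

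For "\(\subseteq\)": it suffices to show that every \(f\)-vector \(x=(f_1(\Delta),\dots,f_n(\Delta))\) with \(\Delta\in\mathcal S_n\) lies in the right-hand side. The right-hand side is convex, so containing the generators gives containment of the whole hull. By Lemma \ref{lem-mu} this amounts to verifying the chain \(\mu_1=1\ge\mu_2\ge\cdots\ge\mu_n\ge0\) for the LYM ratios of \(x\).
\begin{itemize}
\item \(\mu_1=f_1/\binom n1=1\) is exactly the properness clause: every singleton lies in \(\Delta\), so \(f_1=n\).
\item The inequalities \(\mu_j\ge\mu_{j+1}\) for \(1\le j\le n-1\) are local LYM, Lemma \ref{lem-lym}, applied to \(\Delta\), which is in particular an order ideal.
\item \(\mu_n\ge0\) holds because \(f_n\) is a count.
\end{itemize}
Hence \(x\in\conv\lbrace\tilde F_1,\dots,\tilde F_n\rbrace\), and the two hulls coincide. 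The claim that this set is an \((n-1)\)-simplex is the affine-independence part of Lemma \ref{lem-mu}.

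No step is a real obstacle here, since the work was front-loaded into Lemmas \ref{lem-lym} and \ref{lem-mu}. The one point needing care is where properness enters. It is used exactly once, to pin \(\mu_1=1\); without it the chain would only give \(\mu_1\le1\), and the hull would acquire extra vertices such as the origin. This is consistent with the paper's warning that the theorem fails under convention (ii). The edge case \(n=1\) is harmless: there \(\mathcal S_1\) has the single \(f\)-vector \((1)=\tilde F_1\), a \(0\)-simplex.
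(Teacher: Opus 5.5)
Your proposal is correct and is essentially the paper's own proof: the complete $j$-skeleta give $\supseteq$, and for $\subseteq$ properness ($\mu_1=1$), local LYM (Lemma~\ref{lem-lym}) and $\mu_n\ge0$ put every $f$-vector into the chain description of Lemma~\ref{lem-mu}, whose affine-independence clause supplies the simplex claim. One minor precision: without properness the hull gains exactly one extra vertex, the origin, and not several.
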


\begin{proof}
For the inclusion \(\subseteq\), let \(\Delta\in\mathcal S_n\). Then \(f_1(\Delta)=n\) gives
\(\mu_1=1\), Lemma \ref{lem-lym} gives \(\mu_j\ge\mu_{j+1}\) for every \(j\), and \(\mu_n\ge0\) is
clear; so the \(f\)-vector of \(\Delta\) satisfies the description of Lemma \ref{lem-mu}. For the
inclusion \(\supseteq\), each \(\tilde F_j\) with \(j\ge1\) is itself the \(f\)-vector of the complete
\(j\)-skeleton, which is a member of \(\mathcal S_n\). That the hull is a simplex of dimension
\(n-1\) is part of Lemma \ref{lem-mu}.
\end{proof}

\begin{definition}
The \textbf{Kozlov simplex} is
\begin{equation}\label{eq:kozsimplex}
  \Koz(n)\;:=\;\conv(S_n)\;\subseteq\;\mathbb R^{n+1},
\end{equation}
which lies in the hyperplane \(\lbrace x_0=1\rbrace\) because every element of \(S_n\) does. Its
image \(\proj(\Koz(n))\subseteq\mathbb R^n\) under the projection \(\proj\) of
Section \ref{sec-notation} --- which forgets the degree-\(0\) entry --- is the polytope of
Theorem \ref{thm-kozlov}, and \(\proj\) restricted to that hyperplane is a bijection onto it, so
nothing is lost either way. \textbf{\textbf{The named object always lives in the larger space, and the smaller
one is always written with \(\proj\).}}
\label{def:kozsimplex}
\label{def-kozsimplex}
\end{definition}

\begin{corollary}
The Kozlov simplex has the description
\begin{equation}\label{eq:facets}
  \proj\bigl(\Koz(n)\bigr)\;=\;\Bigl\lbrace x\in\mathbb R^{n}\;:\;x_1=n,\quad
     \frac{x_1}{\binom n1}\ge\frac{x_2}{\binom n2}\ge\cdots\ge\frac{x_n}{\binom nn}\ge0\Bigr\rbrace ,
\end{equation}
and for \(n\ge2\) these \(n\) inequalities are exactly its facets. The facet opposite \(\tilde F_j\)
is cut out by
\begin{equation}\label{eq:facets-2}
  \frac{x_j}{\binom nj}=\frac{x_{j+1}}{\binom n{j+1}}\qquad(1\le j\le n-1),
\end{equation}
and the facet opposite \(\tilde F_n\) by \(x_n=0\).
\label{cor:facets}
\label{cor-facets}
\end{corollary}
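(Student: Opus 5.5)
The corollary follows by combining Theorem \ref{thm-kozlov} with Lemma \ref{lem-mu}; no new combinatorial input is needed.

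\textbf{Step 1: the description of the set.} By Definition \ref{def-kozsimplex}, \(\proj(\Koz(n))=\conv\proj(S_n)\), since \(\proj\) is linear and linear maps commute with convex hulls. The set \(\proj(S_n)\) is exactly the set of \((f_1(\Delta),\dots,f_n(\Delta))\) with \(\Delta\in\mathcal S_n\). Theorem \ref{thm-kozlov} identifies its hull with \(\conv\lbrace\tilde F_1,\dots,\tilde F_n\rbrace\). Lemma \ref{lem-mu} identifies that hull with the chain \(\mu_1=1\ge\mu_2\ge\cdots\ge\mu_n\ge0\). Writing \(\mu_j=x_j/\binom nj\) and noting that \(\mu_1=1\) is the same as \(x_1=n\), this is precisely the displayed set \eqref{eq:facets}.

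\textbf{Step 2: the facets.} Lemma \ref{lem-mu} already states that for \(n\ge2\) the \(n\) inequalities \(\mu_j\ge\mu_{j+1}\) (\(1\le j\le n-1\)) and \(\mu_n\ge0\) each cut out a facet. The rescaling \(x\mapsto\mu(x)\) is a linear automorphism, so it carries facets to facets. Since a simplex of dimension \(n-1\) has exactly \(n\) facets, this list is complete and irredundant. It remains to match each facet with its opposite vertex. For this I use the barycentric formula \eqref{eq:mu-2} from the proof of Lemma \ref{lem-mu}: \(\lambda_j=\mu_j-\mu_{j+1}\) for \(j<n\), and \(\lambda_n=\mu_n\). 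The facet on which \(\lambda_j=0\) is the convex hull of all vertices other than \(\tilde F_j\), so it is the facet opposite \(\tilde F_j\). In the original coordinates it reads \(x_j/\binom nj=x_{j+1}/\binom n{j+1}\) for \(j<n\), and \(x_n=0\) for \(j=n\).

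\textbf{Main obstacle.} The only point needing care is whether "facet" is meant inside the affine hull \(\lbrace x_1=n\rbrace\). The equation \(x_1=n\) is an equality, not a facet, and it is the reason the count is \(n\) rather than \(n+1\). With that understood, the argument is bookkeeping. The case \(n=1\) is excluded because there the simplex is a point and has no facets.
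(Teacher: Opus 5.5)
Your proposal is correct and follows the paper's own argument. The paper likewise obtains the corollary by rewriting Lemma~\ref{lem-mu} in the original coordinates, applied to \(\proj(\Koz(n))\), which is identified through Theorem~\ref{thm-kozlov} as Definition~\ref{def-kozsimplex} already records, and reads off which facet is opposite which vertex from the barycentric formula \eqref{eq:mu-2}. Your one superfluous step is transporting facets through the rescaling: the \(\mu_j\) are already linear functions of \(x\), so the set of Lemma~\ref{lem-mu} is literally the displayed set.
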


\begin{proof}
Rewriting the chain of Lemma \ref{lem-mu} in the original coordinates gives the displayed description,
since \(\mu_1=1\) is the condition \(x_1=n\). The facet statement is the last sentence of that lemma,
translated the same way.
\end{proof}

\begin{mexample}
At \(n=3\) the description reads \(x_1=3\) together with
\(x_1/3\ge x_2/3\ge x_3/1\ge0\), that is
\begin{equation}\label{eq:facets-ex}
  \proj\bigl(\Koz(3)\bigr)\;=\;\bigl\lbrace x\in\mathbb R^3\;:\;x_1=3,\quad
     x_2\le3,\quad x_2\ge3x_3,\quad x_3\ge0\bigr\rbrace ,
\end{equation}
a triangle in the plane \(\lbrace x_1=3\rbrace\) with vertices
\(\tilde F_1=(3,0,0)\), \(\tilde F_2=(3,3,0)\) and \(\tilde F_3=(3,3,1)\), the \(f\)-vectors of the
three vertices of \([3]\) alone, the full graph on \([3]\), and the full simplex. Its three edges are
the three facets, each opposite the expected vertex: \(x_2=3\) joins \(\tilde F_2\) to
\(\tilde F_3\), \(x_2=3x_3\) joins \(\tilde F_1\) to \(\tilde F_3\), and \(x_3=0\) joins
\(\tilde F_1\) to \(\tilde F_2\). The middle one is the only inequality of the three that mixes two
coordinates, and it is the one the rescaling of Definition \ref{def-lym-ratios} exists to make
readable: as a statement about LYM ratios it is simply \(\mu_2\ge\mu_3\).
\label{ex-facets}
\end{mexample}
\subsection{Properness, and the permissive convention}
\label{sec-permissive}
Theorem \ref{thm-kozlov} depends on convention (iii) of Section \ref{sec-rank1}, and this subsection says
exactly how. Passing to convention (ii), which drops the requirement that \(\Delta\) contain every
singleton but keeps \(\Delta\ne\emptyset\), replaces \(\mu_1=1\) by \(\mu_1\le1\), and the hull
acquires one more vertex. Write
\begin{equation}\label{eq:Splus}
  S_n^{+}\;=\;\bigl\lbrace(f_0(\Delta),f_1(\Delta),\dots,f_n(\Delta)):
    \Delta\in\mathcal D_n,\ \Delta\ne\emptyset\bigr\rbrace\;\subseteq\;\mathbb Z^{n+1}
\end{equation}
for the permissive analogue of \(S_n\), so that \(S_n\subseteq S_n^{+}\).

The theorem is genuinely false under the permissive convention, not merely less sharp. For \(n=2\) the
order ideal \(\lbrace\emptyset,\lbrace1\rbrace\rbrace\) has \(f\)-vector \((1,1,0)\), whose projection
\((1,0)\) is not in \(\conv\lbrace(2,0),(2,1)\rbrace=\proj(\Koz(2))\); so a reader who imports a
different definition of "simplicial complex on \(n\) vertices" gets a false statement rather than a
weaker one. Kozlov fixes the strict convention explicitly, his §2 closing with "\emph{From now on, \(n\)
always denotes the fixed number of vertices in our graph or complex}", so that under his own
definitions \(\mu_1=1\) identically and both of his proofs are complete as printed. The convention is recorded
here because the statement depends on it, not because anything is missing from the original.

\begin{lemma}
(\emph{The permissive LYM chain.}) For \(n\ge1\), with the LYM ratios of Definition \ref{def-lym-ratios},
\begin{equation}\label{eq:mu-permissive}
  \conv\lbrace0,\tilde F_1,\dots,\tilde F_n\rbrace
  \;=\;\lbrace x\in\mathbb R^n:\ 1\ge\mu_1\ge\mu_2\ge\cdots\ge\mu_n\ge0\rbrace ,
\end{equation}
and the \(n+1\) points \(0,\tilde F_1,\dots,\tilde F_n\) are affinely independent, so this set is an
\(n\)-simplex, one dimension more than the simplex of Lemma \ref{lem-mu}.
\label{lem:mu-permissive}
\label{lem-mu-permissive}
\end{lemma}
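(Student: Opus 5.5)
The plan is to repeat the barycentric-coordinate argument of Lemma \ref{lem-mu}, now with one extra point, the origin. Take a convex combination
\[
  x=\lambda_0\cdot 0+\sum_{j=1}^n\lambda_j\tilde F_j ,\qquad \lambda_0,\dots,\lambda_n\ge0,\quad \sum_{j=0}^n\lambda_j=1 .
\]
The point \(0\) contributes nothing to any coordinate, and \(\tilde F_\ell\) has \(j\)-th coordinate \(\binom nj\) exactly when \(\ell\ge j\). Hence the \(j\)-th LYM ratio is still
\[
  \mu_j=\sum_{\ell\ge j}\lambda_\ell .
\]
Inverting gives \(\lambda_j=\mu_j-\mu_{j+1}\) for \(1\le j<n\) and \(\lambda_n=\mu_n\), as before. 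The new ingredient is \(\lambda_0\): the normalisation now reads \(\lambda_0=1-\mu_1\) instead of forcing \(\mu_1=1\).

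With this, the conditions \(\lambda_j\ge0\) for \(0\le j\le n\) become exactly \(1\ge\mu_1\), \(\mu_j\ge\mu_{j+1}\) for \(1\le j<n\), and \(\mu_n\ge0\). The condition \(\sum_j\lambda_j=1\) is no longer a constraint on \(x\), because it is absorbed into the definition of \(\lambda_0\). So every point of the hull satisfies the chain. Conversely, given \(x\) satisfying the chain, the displayed formulas define non-negative \(\lambda\)'s summing to \(1\) that reproduce \(x\). This proves the equality of the two sets in \eqref{eq:mu-permissive}.

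For affine independence, it suffices to show that the vectors \(\tilde F_1-0,\dots,\tilde F_n-0\) are linearly independent. In the rescaled coordinates \(\mu\) (a linear automorphism by Definition \ref{def-lym-ratios}), \(\tilde F_j\) becomes the vector with ones in positions \(1,\dots,j\) and zeros after. These \(n\) vectors form a unitriangular basis of \(\mathbb R^n\). Therefore the \(n+1\) points are affinely independent and the hull is a full-dimensional \(n\)-simplex, one dimension more than in Lemma \ref{lem-mu}.

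I expect no real obstacle. The only point needing care is that the bijection between \(x\) and \((\lambda_0,\dots,\lambda_n)\) requires \(\lambda_0\) to be determined by \(x\). It is, since \(\lambda_0=1-\mu_1(x)\), so the translation is an honest bijection, exactly as in the proof of Lemma \ref{lem-mu}. The case \(n=1\) is covered by the same computation: the segment \(\conv\lbrace0,\tilde F_1\rbrace\) is \(\lbrace 1\ge\mu_1\ge0\rbrace\).
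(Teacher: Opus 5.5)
Your proof is correct and matches the paper's argument for the set equality: the same inversion \(\mu_j=\sum_{\ell\ge j}\lambda_\ell\), with \(\lambda_0=1-\mu_1\) playing the role of the slack in \(\mu_1\le1\). The one difference is affine independence. You show directly that the \(\tilde F_j\) become a unitriangular basis in \(\mu\)-coordinates, while the paper reuses Lemma \ref{lem-mu} and notes that the affine hull of the \(\tilde F_j\) lies in \(\lbrace\mu_1=1\rbrace\), which misses \(0\); both arguments work, and yours is self-contained.
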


\begin{proof}
Write a point of the hull as \(x=\lambda_0\cdot0+\sum_{j=1}^n\lambda_j\tilde F_j\) with
\(\lambda_p\ge0\) and \(\sum_{p=0}^n\lambda_p=1\). Reading off the \(j\)-th coordinate gives
\(\mu_j=\sum_{\ell\ge j}\lambda_\ell\) exactly as in Lemma \ref{lem-mu}, whence
\begin{equation}\label{eq:mu-permissive-2}
  \lambda_j=\mu_j-\mu_{j+1}\quad(1\le j<n),\qquad \lambda_n=\mu_n,\qquad
  \lambda_0=1-\mu_1 .
\end{equation}
The only change from Lemma \ref{lem-mu} is the last equation: the weight on the new vertex is the slack
in \(\mu_1\le1\). Non-negativity of \(\lambda_1,\dots,\lambda_n\) is the chain
\(\mu_1\ge\cdots\ge\mu_n\ge0\) and non-negativity of \(\lambda_0\) is \(\mu_1\le1\), which proves the
equality of sets. For affine independence: the \(\tilde F_j\) are affinely independent by
Lemma \ref{lem-mu} and their affine hull lies in \(\lbrace\mu_1=1\rbrace\), which does not contain
\(0\); so adjoining \(0\) raises the dimension by one.
\end{proof}

\begin{proposition}
(\emph{Kozlov's theorem, permissive form.}) For \(n\ge1\),
\begin{equation}\label{eq:permissive-hull}
  \proj\bigl(\conv(S_n^{+})\bigr)\;=\;\conv\lbrace0,\tilde F_1,\dots,\tilde F_n\rbrace ,
\end{equation}
an \(n\)-simplex. Equivalently, before projection,
\begin{equation}\label{eq:permissive-hull-incl}
  \conv(S_n^{+})\;=\;\rasimp(\vec b),\qquad
  \vec b=\bigl(\tbinom n0,\tbinom n1,\dots,\tbinom nn\bigr).
\end{equation}
\label{prop:permissive-hull}
\label{prop-permissive-hull}
\end{proposition}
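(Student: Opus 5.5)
The argument runs parallel to the proof of Theorem \ref{thm-kozlov}, with Lemma \ref{lem-mu-permissive} replacing Lemma \ref{lem-mu}. There are two inclusions for the projected statement \eqref{eq:permissive-hull}, and a separate bookkeeping step for the unprojected form \eqref{eq:permissive-hull-incl}.

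For \(\subseteq\), take a non-empty order ideal \(\Delta\in\mathcal D_n\) and let \(x=\proj(f(\Delta))=(f_1,\dots,f_n)\). Lemma \ref{lem-lym} was stated for arbitrary order ideals, not only for members of \(\mathcal S_n\), so it gives \(\mu_1\ge\mu_2\ge\cdots\ge\mu_n\). Trivially \(\mu_n\ge0\), and \(f_1\le n\) gives \(\mu_1\le1\). Hence \(x\) lies in the set described by Lemma \ref{lem-mu-permissive}, which is \(\conv\lbrace0,\tilde F_1,\dots,\tilde F_n\rbrace\). That set is convex, and \(\proj\) is linear, so \(\proj(\conv S_n^+)=\conv\proj(S_n^+)\) is contained in it.

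For \(\supseteq\), it suffices to realise each vertex. The point \(0\) is \(\proj\) of \((1,0,\dots,0)\), the \(f\)-vector of the order ideal \(\lbrace\emptyset\rbrace\). This ideal is non-empty, so it lies in \(S_n^+\); it is exactly the case that convention (ii) admits and (iii) excludes. Each \(\tilde F_j\) is \(\proj\) of the complete \(j\)-skeleton. The statement that the hull is an \(n\)-simplex is then the affine-independence clause of Lemma \ref{lem-mu-permissive}.

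For the unprojected form, note that every element of \(S_n^+\) has \(f_0=1\), so \(\conv(S_n^+)\) lies in the hyperplane \(\lbrace x_0=1\rbrace\), on which \(\proj\) is an affine bijection. Lifting, the vertices become \((1,0,\dots,0)\) and \((1,\binom n1,\dots,\binom nj,0,\dots,0)\) for \(1\le j\le n\). These are exactly the points \(v_j=(b_0,\dots,b_j,0,\dots,0)\), \(j=0,\dots,n\), for \(\vec b=(\binom n0,\dots,\binom nn)\), which are the vertices of \(\rasimp(\vec b)\). The only point needing care is indexing: \(\rasimp\) is defined with legs indexed from \(1\), whereas \(\vec b\) has length \(n+1\) and starts at \(\binom n0\). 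I would state that the vertex \(v_0\) of \(\rasimp(\vec b)\) here is \((1,0,\dots,0)\) and not the origin, and check this against the definition of \(\rasimp\) once it is given. I expect no real obstacle. The one substantive point is that the local LYM inequality does not use properness, which is already built into Lemma \ref{lem-lym}.
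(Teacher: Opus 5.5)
Your proposal is correct and follows the paper's proof essentially step for step. The ingredients match: local LYM (which assumes only an order ideal) together with \(f_1\le n\) for \(\subseteq\), the complete skeleta and \(\lbrace\emptyset\rbrace\) for \(\supseteq\), and the vertex identification of \(\rasimp(\vec b)\) for the unprojected form. The only difference is that you lift through the hyperplane \(\lbrace x_0=1\rbrace\) directly. The paper instead defers that step to Proposition~\ref{prop-properness-facet}, which lists the same \(n+1\) vertices \(e_0\) and \(\bigl(1,\tbinom n1,\dots,\tbinom nj,0,\dots,0\bigr)\).
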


\begin{proof}
For \(\subseteq\): let \(\Delta\in\mathcal D_n\) be non-empty. Then \(f_0(\Delta)=1\), since downward
closure and non-emptiness force \(\emptyset\in\Delta\); \(\mu_1=f_1(\Delta)/n\le1\); Lemma
\ref{lem-lym} (which assumes only that \(\Delta\) is an order ideal, not that it is a complex)
gives \(\mu_j\ge\mu_{j+1}\) throughout; and \(\mu_n\ge0\) is clear. So the projected \(f\)-vector
satisfies the description of Lemma \ref{lem-mu-permissive}. For \(\supseteq\): each \(\tilde F_j\) is
the projected \(f\)-vector of the complete \(j\)-skeleton, which lies in \(\mathcal S_n\subseteq
\mathcal D_n\), and \(0=\proj(e_0)\) is the projected \(f\)-vector of
\(\lbrace\emptyset\rbrace\in\mathcal D_n\); so all \(n+1\) vertices of Lemma
\ref{lem-mu-permissive}'s simplex are attained. The unprojected form is Proposition
\ref{prop-properness-facet} below, which identifies \(\rasimp(\vec b)\) as the hull of those same
\(n+1\) points.
\end{proof}

The next proposition is the sharper way to put all of this: it locates properness rather than merely
noting its effect.

\begin{proposition}
(\emph{Properness is a facet.}) Let \(n\ge1\) and \(\vec b=\bigl(\tbinom n0,\dots,\tbinom nn\bigr)\), and
form \(\rasimp(\vec b)\subseteq\mathbb R^{n+1}\) by Definition \ref{def-rasimp}, read with legs and
coordinates indexed \(0,1,\dots,n\) since \(\vec b\) has length \(n+1\). Then
\begin{equation}\label{eq:permissive-simplex}
  \rasimp(\vec b)\;=\;\conv\bigl(\lbrace e_0\rbrace\cup\Koz(n)\bigr),
\end{equation}
and \(\Koz(n)\) is the facet of \(\rasimp(\vec b)\) cut out by \(x_1=n\): by properness
itself. The vertex that properness removes is \(e_0\).
\label{prop:properness-facet}
\label{prop-properness-facet}
\end{proposition}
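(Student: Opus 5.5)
The plan is to prove everything by reading off barycentric coordinates of the right-angle simplex, exactly as in the proofs of Lemma \ref{lem-mu} and Lemma \ref{lem-mu-permissive}. With legs indexed \(0,1,\dots,n\), Definition \ref{def-rasimp} gives \(\rasimp(\vec b)\) as the convex hull of the points
\[
  v_j=\bigl(\tbinom n0,\tbinom n1,\dots,\tbinom nj,0,\dots,0\bigr)\in\mathbb R^{n+1},
  \qquad 0\le j\le n.
\]
I am assuming that, for a leg vector of length \(n+1\), the vertex set runs over \(j=0,\dots,n\), with the origin not a vertex. This matches \(\proj(\Koz(n))=\rasimp(\kozvec(n))\) from the notation table, which has \(n\) vertices. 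Checking this indexing convention against Definition \ref{def-rasimp} is the one place I expect care to be needed.

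\textbf{Identifying the vertices.} Two observations do this.

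\begin{enumerate}
\item \(v_0=(1,0,\dots,0)=e_0\).
\item For \(j\ge1\), \(v_j\) is the unique point of the hyperplane \(\lbrace x_0=1\rbrace\) with \(\proj(v_j)=\tilde F_j\). So \(v_j\) is the full \(f\)-vector of the complete \(j\)-skeleton, an element of \(S_n\).
\end{enumerate}

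By Definition \ref{def-kozsimplex}, \(\proj\) restricted to \(\lbrace x_0=1\rbrace\) is an affine bijection carrying \(\Koz(n)\) onto \(\proj(\Koz(n))\). By Theorem \ref{thm-kozlov}, that image is \(\conv\lbrace\tilde F_1,\dots,\tilde F_n\rbrace\). Pulling back along this bijection gives \(\Koz(n)=\conv\lbrace v_1,\dots,v_n\rbrace\). Hence
\[
  \conv\bigl(\lbrace e_0\rbrace\cup\Koz(n)\bigr)=\conv\lbrace v_0,v_1,\dots,v_n\rbrace=\rasimp(\vec b),
\]
which is \eqref{eq:permissive-simplex}.

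\textbf{Facet structure.} Write \(x=\sum_{j=0}^n\lambda_jv_j\) with \(\lambda_j\ge0\) and \(\sum_j\lambda_j=1\). The coordinates are then triangular in the weights:
\[
  x_i=\tbinom ni\sum_{\ell\ge i}\lambda_\ell .
\]
This system is invertible: set \(\lambda_n=x_n\) and \(\lambda_i=x_i/\binom ni-x_{i+1}/\binom n{i+1}\) for \(i<n\). Invertibility shows that \(v_0,\dots,v_n\) are affinely independent, so \(\rasimp(\vec b)\) is an \(n\)-simplex.

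In particular \(x_1=n\,(1-\lambda_0)\le n\) on the whole simplex, with equality exactly when \(\lambda_0=0\). So \(x_1\le n\) is a valid inequality whose equality set is the facet opposite \(v_0\), namely \(\conv\lbrace v_1,\dots,v_n\rbrace=\Koz(n)\). The condition \(x_1=n\) is precisely properness, \(f_1(\Delta)=n\), and the vertex it removes is \(v_0=e_0\).

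The same computation also completes the unprojected form \eqref{eq:permissive-hull-incl} of Proposition \ref{prop-permissive-hull}. There \(e_0\) is the \(f\)-vector of \(\lbrace\emptyset\rbrace\), and the projected statement \eqref{eq:permissive-hull} lifts because every point of \(S_n^{+}\) lies in \(\lbrace x_0=1\rbrace\).
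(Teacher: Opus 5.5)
Your proof is correct and follows the paper's route. You identify the $n+1$ vertices of $\rasimp(\vec b)$ as $e_0$ and the skeleton $f$-vectors, observe that $x_1\le n$ is tight exactly away from $e_0$, and conclude that the tight face is $\Koz(n)$; your reading of the indexing in Definition \ref{def-rasimp} is the paper's own, which also lists $e_0$ as one of the $n+1$ vertices. The only difference is that you get validity and affine independence at once from the triangular barycentric system $x_i=\binom ni\sum_{\ell\ge i}\lambda_\ell$ in $\mathbb R^{n+1}$, whereas the paper cites the dimensions $n-1$ and $n$ from Lemma \ref{lem-mu} and Lemma \ref{lem-mu-permissive}, which rest on the same computation.
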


\begin{proof}
By Definition \ref{def-rasimp} the \(n+1\) vertices of \(\rasimp(\vec b)\) are \(e_0\) together with
\begin{equation}\label{eq:permissive-verts}
  \bigl(1,\tbinom n1,\dots,\tbinom nj,0,\dots,0\bigr)\qquad(1\le j\le n),
\end{equation}
and the latter are precisely the vertices of \(\Koz(n)\), which gives
\eqref{eq:permissive-simplex}. Every vertex
satisfies \(x_1\le n\), with equality at all of them except \(e_0\); so \(x_1\le n\) is a valid
inequality whose tight face is the hull of the remaining \(n\) vertices, namely \(\Koz(n)\). That face
has dimension \(n-1\) by Lemma \ref{lem-mu} while \(\rasimp(\vec b)\) has dimension \(n\) by
Lemma \ref{lem-mu-permissive}, so it is a facet.
\end{proof}

Two things are worth stating explicitly, both having been got wrong in drafting. \textbf{\textbf{\(e_0\) is a
standard basis vector, not the origin}}: it is \(\proj(e_0)\) that is the zero vector, which is why the
projected form \eqref{eq:permissive-hull} reads \(\conv\lbrace0,\tilde F_1,\dots,\tilde F_n\rbrace\).
Second, the vertex in question is a vertex of a polytope, not a vertex of a complex; as an \(f\)-vector it
is the one of \(\lbrace\emptyset\rbrace\), the order ideal with the empty face and nothing else. So
properness is not a hypothesis bolted onto Theorem \ref{thm-kozlov}: it is one of the \(n+1\) facet
inequalities of \(\rasimp(\vec b)\), and the Kozlov simplex is the face where it is tight.

Finally, \(S_n^{+}\) is not merely bigger than \(S_n\) but built from smaller copies of it. For
\(0\le k\le n\) let \(\iota_k:\mathbb R^{k+1}\to\mathbb R^{n+1}\) append \(n-k\) zeros.

\begin{proposition}
(\emph{The permissive set is fibred by \(f_1\).}) For \(n\ge0\),
\begin{equation}\label{eq:permissive-fibres}
  S_n^{+}\;=\;\bigsqcup_{k=0}^{n}\iota_k\bigl(S_k\bigr),
\end{equation}
the part with \(f_1=k\) being exactly \(\iota_k(S_k)\). The union is disjoint, so this is a partition
of \(S_n^{+}\) into \(n+1\) blocks, from the single point \(\iota_0(S_0)=\lbrace e_0\rbrace\) at
\(k=0\) up to \(S_n\) itself at \(k=n\).
\label{prop:permissive-fibres}
\label{prop-permissive-fibres}
\end{proposition}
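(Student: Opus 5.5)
The plan is to prove both inclusions by classifying each non-empty order ideal \(\Delta\in\mathcal D_n\) by its vertex set. Let \(W=\bigcup\Delta=\lbrace i\in[n]:\lbrace i\rbrace\in\Delta\rbrace\) and \(k=|W|=f_1(\Delta)\). Since \(\Delta\) is downward closed, every face lies inside \(W\). Hence \(\Delta\) is an order ideal of \(\mathcal P(W)\) containing \(\emptyset\), because it is non-empty, and containing every singleton of \(W\), by definition of \(W\). After an order-preserving relabelling \(W\cong[k]\), this is exactly a member of \(\mathcal S_k\) under convention (iii). At \(k=0\) the only such complex is \(\lbrace\emptyset\rbrace\), and \(S_0=\lbrace(1)\rbrace\).

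\textbf{Inclusion \(\subseteq\).} Relabelling preserves face counts, and faces have size at most \(k\), so \(f_j(\Delta)=0\) for \(j>k\). The \(f\)-vector of \(\Delta\) is therefore \(\iota_k\) applied to the \(f\)-vector \((1,f_1,\dots,f_k)\) of the relabelled complex, and that vector lies in \(S_k\).

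\textbf{Inclusion \(\supseteq\).} Given \(\Gamma\in\mathcal S_k\) with \(0\le k\le n\), regard \(\Gamma\) as a family of subsets of \([k]\subseteq[n]\). It remains downward closed in \(\mathcal P([n])\) and non-empty, so it belongs to \(\mathcal D_n\setminus\lbrace\emptyset\rbrace\). Its \(f\)-vector in \(\mathbb R^{n+1}\) is \(\iota_k\) of its \(f\)-vector in \(\mathbb R^{k+1}\). Together the two inclusions give \(S_n^{+}=\bigcup_k\iota_k(S_k)\).

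\textbf{The fibre statement and disjointness.} Every element of \(S_k\) has entry \(f_1=k\): for \(k\ge1\) this is properness, and for \(k=0\) the single vector \((1)\), padded by \(\iota_0\), has \(f_1=0\). So the \(f_1\)-coordinate of every point of \(\iota_k(S_k)\) equals \(k\). This identifies the \(f_1=k\) part of \(S_n^{+}\) with exactly \(\iota_k(S_k)\), and the blocks are pairwise disjoint and all non-empty, giving \(n+1\) of them. The endpoints check out: \(\iota_0(S_0)=\lbrace e_0\rbrace\), and \(\iota_n\) is the identity, so the last block is \(S_n\).

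The only point needing care is the edge case \(k=0\), together with the convention that \(S_0\) and \(\iota_0\) are read in \(\mathbb R^{1}\). The rest is bookkeeping; no inequality from Lemma \ref{lem-lym} is needed.
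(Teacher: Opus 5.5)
Your proof is correct and follows the paper's argument essentially step for step. You classify a non-empty order ideal by its vertex set of size \(k=f_1\), transport it to \([k]\) for \(\subseteq\), embed \([k]\hookrightarrow[n]\) for \(\supseteq\) (you fix the inclusion where the paper allows any injection, which changes nothing), read disjointness off the \(f_1\)-coordinate, and treat \(k=0\), \(\Delta=\lbrace\emptyset\rbrace\), separately.
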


\begin{proof}
Let \(\Delta\in\mathcal D_n\) be non-empty and put
\begin{equation}\label{eq:fibres-support}
  V\;=\;\bigl\lbrace i\in[n]\;:\;\lbrace i\rbrace\in\Delta\bigr\rbrace,
  \qquad
  k\;:=\;\lvert V\rvert\;=\;f_1(\Delta).
\end{equation}
Every \(\sigma\in\Delta\) satisfies \(\sigma\subseteq V\), since downward
closure puts each of its singletons in \(\Delta\); hence \(f_j(\Delta)=0\) for \(j>k\). Regarded as a
family on \(V\), \(\Delta\) is an order ideal containing \(\emptyset\) and every singleton of \(V\),
that is a simplicial complex on \(V\) in the strict sense. Transporting along any bijection
\(V\to[k]\) leaves every \(f_j\) unchanged, so the \(f\)-vector of \(\Delta\) is \(\iota_k\) of an
element of \(S_k\). Conversely, given \(\Delta'\in\mathcal S_k\) and any injection \([k]\hookrightarrow[n]\),
its image is a non-empty order ideal on \([n]\) with \(f_1=k\) and the same \(f\)-vector up to
\(\iota_k\); so \(\iota_k(S_k)\subseteq S_n^{+}\). Finally the blocks are disjoint because an element of
\(\iota_k(S_k)\) has first coordinate \(f_1=k\), which determines \(k\).

For \(k=0\) the argument degenerates correctly: \(V=\emptyset\) forces \(\Delta=\lbrace\emptyset\rbrace\),
\(\mathcal S_0=\lbrace\lbrace\emptyset\rbrace\rbrace\) gives \(S_0=\lbrace(1)\rbrace\), and
\(\iota_0(1)=e_0\).
\end{proof}

Figure \ref{fig:permissive-fibres} draws the whole of \(S_4^{+}\). The layer axis is \(f_1\)
itself, so the fibres of Proposition \ref{prop-permissive-fibres} are separated exactly rather than by
the accident of a projection, and the cone structure of Proposition \ref{prop-properness-facet} is
visible in the same picture: \(e_0\) alone at the bottom, the four vertices of \(\Koz(4)\) in the top
face, and the edges between. Only the top slice needs projecting at all --- the slice \(f_1=k\) has
affine dimension \(k-1\), because \(f_j\) vanishes on it for \(j>k\), so the slices with \(k\le3\)
are already planar and are drawn without distortion.

\begin{figure}[htbp]
\centering
\includegraphics[width=0.86\linewidth]{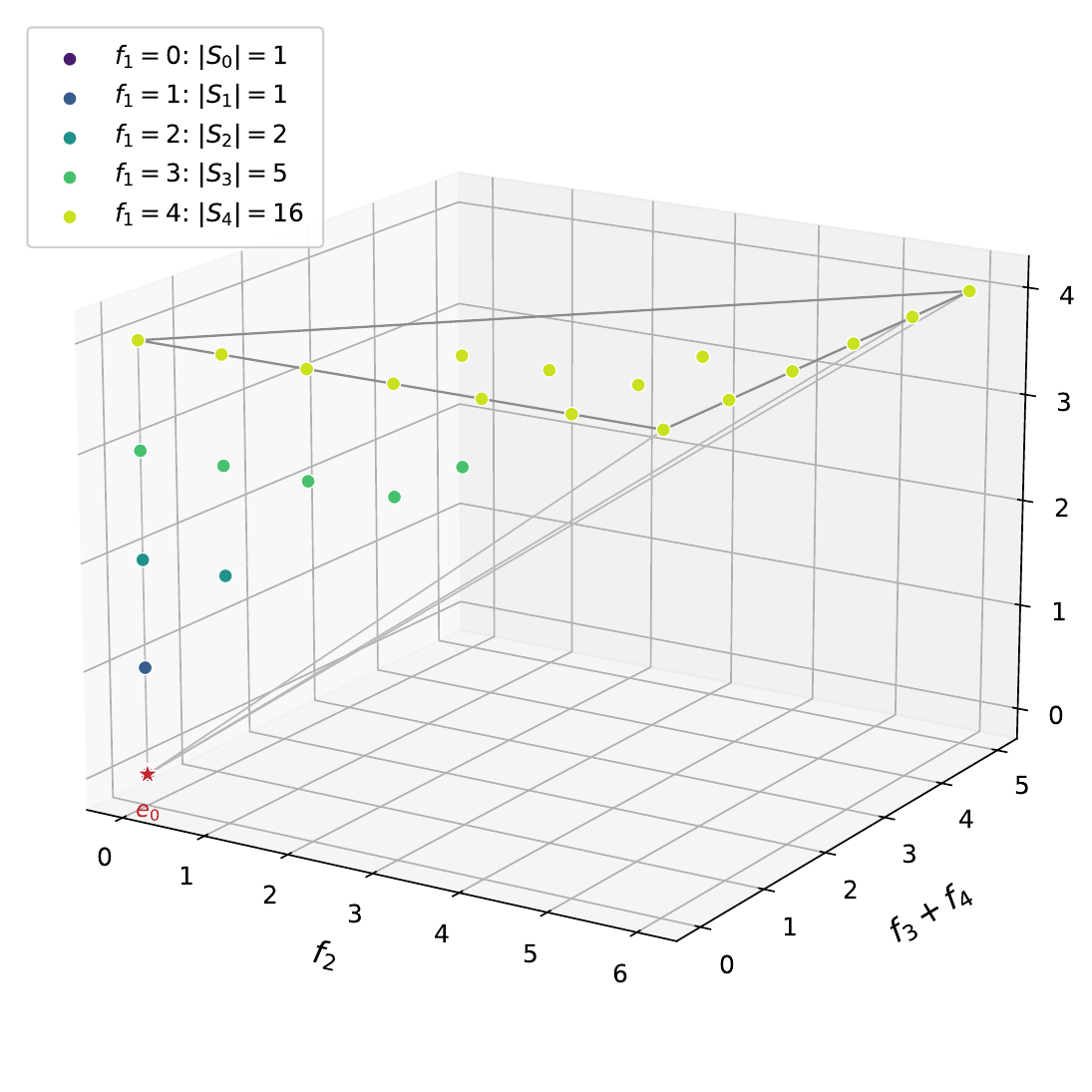}
\caption{The \(25\) points of \(S_4^{+}\), fibred by \(f_1\) as in
Proposition~\ref{prop:permissive-fibres}, inside \(\rasimp(\vec b)\) as in
Proposition~\ref{prop:properness-facet}. Slices \(f_1\le3\) are exact; the top slice, \(S_4\)
itself, is drawn in the projection \((f_2,\,f_3+f_4)\), whose second coordinate counts the faces of
cardinality at least three and which is injective on every slice at once.}
\label{fig:permissive-fibres}
\end{figure}

A picture of the same kind for seven vertices is in Kupreyeva's thesis
Kupreyeva~\cite[Figure 3.1]{Kupreyeva2019}: it draws \(\conv(F_7^3)\), the hull truncated at cardinality
three, in the two coordinates that are our \((f_2,f_3)\), and marks the achievable \(f\)-vectors
apart from the other lattice points of the hull. That distinction is the finer question this paper
does not treat --- which lattice points of the hull are achievable, as against what the hull is ---
and it is the subject of the companion paper. Two conventions differ there and are worth noting
against Section \ref{sec-rank1}'s list: that thesis admits the empty complex, convention\textasciitilde{}(i), and it
indexes \(f\)-vectors by dimension rather than by cardinality.

All of this was verified exactly at \(n=4\). Of the \(\lvert\mathcal D_4\rvert=168\) order ideals of
\(\mathcal P([4])\), the \(167\) non-empty ones realize \(25\) distinct \(f\)-vectors, whose convex hull
is \(\rasimp(\vec b)\) (five vertices, five facets, one of them \(x_1\le4\)), and the face \(x_1=4\)
has exactly the four vertices of \(\Koz(4)\); the five blocks of \eqref{eq:permissive-fibres} have
sizes \(1,1,2,5,16\), matching \(\lvert S_k\rvert\) for \(k=0,\dots,4\). The empty order ideal is
genuinely excluded and not merely absent: its \(f\)-vector is \(0\), which is not in
\(\rasimp(\vec b)\), so admitting convention (i) in place of (ii) would give a \(26\)-point set whose
hull is a different, five-dimensional polytope.
\section{Higher rank: four nested classes of submodule}
\label{sec-hierarchy}
Now let \(F=\bigoplus_{i=1}^rEg_i\) with \(\deg g_i=d_i\), \(r\ge1\).

\begin{definition}
A graded submodule \(M\subseteq F\) is an \textbf{ideal-direct-sum module}, or \textbf{ids-module}, if
\begin{equation}\label{eq:ids}
  M \;=\; I_1g_1\oplus\cdots\oplus I_rg_r
\end{equation}
for graded ideals \(I_1,\dots,I_r\subseteq E\); equivalently, if
\(M=(M\cap Eg_1)\oplus\cdots\oplus(M\cap Eg_r)\), that is, if \(M\) is already the direct sum of its
own restrictions to the summands of \(F\), with no coupling between generators. Call such an \(M\)
\textbf{proper} if \((I_i)_{\le1}=0\) for every \(i\).
\label{def:ids}
\label{def-ids}
\end{definition}

\begin{definition}
Amata and Crupi~\cite[Definition 2.1]{AmataCrupi2020} \(M\subseteq F\) is a \textbf{monomial submodule} if it is generated
by monomials \(e_\sigma g_i\) of \(F\). It is an \textbf{Amata--Crupi monomial submodule}, or a \textbf{proper}
monomial submodule, if in addition each \(I_i\) has no minimal generator of degree \(\le1\):
equivalently, each associated complex \(\Delta_i\) satisfies \(f_1(\Delta_i)=n\).
\label{def-monomial-submodule}
\end{definition}

\begin{proposition}
Let \(n\ge1\), \(r\ge1\) and \(F=\bigoplus_{i=1}^rEg_i\). Then
\begin{enumerate}
\item every monomial submodule of \(F\) is an ids-module, and each of its ideals \(I_i\) is monomial;
\item every ids-module of \(F\) is a graded submodule.
\end{enumerate}
\label{prop-hierarchy}
\end{proposition}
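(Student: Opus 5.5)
For (1), I will let \(M\) be generated by a set \(G\) of monomials of \(F\), and for each \(i\) put \(I_i\) equal to the ideal of \(E\) generated by those \(e_\sigma\) with \(e_\sigma g_i\in G\). The key observation is that for any \(a\in E\) and any generator \(e_\sigma g_i\), the product \(a\,e_\sigma g_i=(ae_\sigma)g_i\) lies in the summand \(Eg_i\). Hence the submodule generated by the elements of \(G\) in slot \(i\) is exactly \(I_ig_i\subseteq Eg_i\), and
\[
M=\sum_i E\cdot\bigl(G\cap Eg_i\bigr)=\sum_i I_ig_i .
\]
The sum is direct because the summands \(Eg_i\) of \(F\) are independent, so \(M=I_1g_1\oplus\cdots\oplus I_rg_r\). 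Each \(I_i\) is generated by monomials \(e_\sigma\), so it is monomial, and in particular graded. This gives the form \eqref{eq:ids}.

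To reach the equivalent form \(M=\bigoplus_i(M\cap Eg_i)\), I will check that \(M\cap Eg_i=I_ig_i\). The inclusion \(\supseteq\) is clear. For \(\subseteq\), take \(x\in M\cap Eg_i\) and write \(x=\sum_\ell y_\ell g_\ell\) with \(y_\ell\in I_\ell\). Comparing components in the free basis forces \(y_\ell=0\) for \(\ell\ne i\), so \(x=y_ig_i\in I_ig_i\).

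For (2), let \(M=\bigoplus_i I_ig_i\) with each \(I_i\) a graded ideal. Each \(I_ig_i\) is an \(E\)-submodule of \(Eg_i\), since \(E\cdot I_i\subseteq I_i\) and one-sided ideals are two-sided here. A sum of submodules is a submodule. For gradedness, a homogeneous decomposition of \(y\in I_i\) gives one of \(yg_i\) with components \(y_jg_i\in I_ig_i\) in degree \(j+d_i\). So every homogeneous component of an element of \(M\) lies in \(M\).

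None of these steps is a real obstacle. The only point needing care is the claim that multiplication never moves a term between slots, which holds because \(F\) is free on the \(g_i\) and \(E\) acts coordinatewise.
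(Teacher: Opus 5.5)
Your proof is correct and follows the paper's argument: multiplication by $E$ never moves a monomial $e_\sigma g_i$ out of $Eg_i$, so a submodule generated by monomials is the direct sum of the monomial ideals generated slot by slot, and (2) amounts to each $I_ig_i$ being a graded submodule. Your additional check that $M\cap Eg_i=I_ig_i$, which confirms the equivalent form of the ids condition, is a detail the paper leaves implicit.
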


\begin{proof}
(1) A monomial of \(F\) is \(e_\sigma g_i\) for a single \(i\), and
\(e_\tau\cdot(e_\sigma g_i)=(e_\tau e_\sigma)g_i\in Eg_i\): multiplication never moves a term between
generators' slots. So the submodule generated by a set of monomials is the direct sum of the ideals
generated by the monomials appearing at each \(g_i\), and these are monomial ideals.

(2) Each \(I_ig_i\) is graded, and a direct sum of graded submodules is graded.
\end{proof}

The two inclusions become strict under \textbf{different} hypotheses, which is worth separating because the
two conditions have nothing to do with each other: the first is about the field having room for a
non-monomial ideal, the second about there being two generators to couple.

\begin{proposition}
Let \(n\ge1\), \(r\ge1\) and \(F=\bigoplus_{i=1}^rEg_i\) with \(\deg g_i=d_i\).
\begin{enumerate}
\item The monomial submodules of \(F\) are a \textbf{proper} subclass of the ids-modules if and only if
\(n\ge2\). For \(n=1\) the two classes coincide, for every \(r\) and every \(\vec d\).
\item The ids-modules of \(F\) are a \textbf{proper} subclass of the graded submodules if and only if
\(\lvert d_i-d_{i'}\rvert\le n\) for some \(i\ne i'\). In particular this requires \(r\ge2\), but
\(r\ge2\) alone does not suffice.
\end{enumerate}
\label{prop-hierarchy-strict}
\end{proposition}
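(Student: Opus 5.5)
Part (1) and part (2) are independent; I treat each as an "if and only if" with an explicit example in one direction and a structural argument in the other.

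For (1), if \(n=1\) then \(E=k\oplus ke_1\). A graded submodule of \(Ee_1\)-free rank one decomposes degreewise, and each graded component is at most one-dimensional, spanned by a monomial. So the only graded ideals of \(E_1\) are \(0\), \((e_1)\) and \(E\), all of them monomial. An ids-module \(\bigoplus I_ig_i\) is then generated by the monomial generators of the \(I_i\) placed at the \(g_i\), so it is monomial, for any \(r\) and \(\vec d\). If \(n\ge2\), take the ideal \(I=(e_1+e_2)\subseteq E\) and the ids-module \(Ig_1\) (all other \(I_i=0\)). It is not monomial: a monomial submodule contains every monomial appearing in its degree-\(1\) elements, since its degree-\(1\) part is spanned by monomials. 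But \(Ig_1\) in degree \(d_1+1\) is the line spanned by \((e_1+e_2)g_1\), which contains neither \(e_1g_1\) nor \(e_2g_1\).

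For (2), first suppose \(|d_i-d_{i'}|\le n\) for some \(i\ne i'\), say \(d_i\le d_{i'}=d_i+s\) with \(0\le s\le n\). Choose \(\sigma\) with \(|\sigma|=s\), so that \(e_\sigma g_i\) and \(g_{i'}\) have the same degree, and let \(M\) be the cyclic submodule generated by \(w=e_\sigma g_i+g_{i'}\). I claim \(M\cap Eg_{i'}=0\). An element \(a w\) with \(a\) homogeneous has \(g_{i'}\)-component \(a g_{i'}\), which is nonzero whenever \(a\ne0\). So \(aw\in Eg_{i'}\) forces \(ae_\sigma=0\) while \(a\ne0\), which is a contradiction. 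Hence \(M\cap Eg_{i'}=0\). Similarly \(M\cap Eg_i\) consists of the \(ae_\sigma g_i\) with \(ag_{i'}=0\), that is \(a=0\), so it too is \(0\). But \(M\ne0\), so \(M\) is not the direct sum of its restrictions, and it is not an ids-module.

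Conversely, suppose \(|d_i-d_{i'}|>n\) for all \(i\ne i'\). Since \(Eg_i\) lives in degrees \([d_i,d_i+n]\), these intervals are pairwise disjoint. So every homogeneous element of \(F\) lies in a single \(Eg_i\). A graded submodule is spanned by its homogeneous elements, hence \(M=\bigoplus_i(M\cap Eg_i)\), which is an ids-module. The remark that \(r\ge2\) is necessary but not sufficient follows: \(r=1\) never satisfies the condition, and \(r=2\) with \(d_2-d_1=n+1\) fails it.

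The only step needing care is the non-splitting claim for the coupled generator \(w\). The argument through the \(g_{i'}\)-coefficient handles it, because \(g_{i'}\) appears with coefficient \(1\) and multiplication by a nonzero \(a\) cannot kill it.
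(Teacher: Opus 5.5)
\textbf{Gap in part (2).} Your part (1) and the converse half of part (2) are correct and follow the paper's own argument. The gap is in the construction for part (2). Your claim \(M\cap Eg_{i'}=0\) is false whenever \(s=\lvert\sigma\rvert\ge1\), that is, whenever the chosen pair has \(d_i\ne d_{i'}\).

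From \(aw=ae_\sigma g_i+ag_{i'}\in Eg_{i'}\) you correctly get \(ae_\sigma=0\). But that does not contradict \(a\ne0\), because \(E\) is full of zero divisors. For any \(j\in\sigma\) one has \(e_je_\sigma=0\), so \(e_jw=e_jg_{i'}\) is a non-zero element of \(M\cap Eg_{i'}\). In fact \(M\cap Eg_{i'}=\operatorname{Ann}(e_\sigma)g_{i'}\), spanned by the \(e_\tau g_{i'}\) with \(\tau\cap\sigma\ne\emptyset\), of dimension \(2^n-2^{n-s}\).

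So your concluding step ("both restrictions vanish, yet \(M\ne0\)") works only for \(s=0\). The \(g_{i'}\)-coefficient argument you single out at the end controls \(\operatorname{Ann}(w)\) and \(M\cap Eg_i\), where what must be excluded is a non-zero \(a\). For \(M\cap Eg_{i'}\) it is the \(g_i\)-coefficient \(ae_\sigma\) that has to vanish, and it can.

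\textbf{Repair.} The conclusion survives, and the half you did prove, \(M\cap Eg_i=0\), already suffices. Suppose \(M=\bigoplus_p(M\cap Eg_p)\). Uniqueness of components in \(F=\bigoplus_pEg_p\) would then put the \(g_i\)-component \(e_\sigma g_i\ne0\) of \(w\) into \(M\cap Eg_i=0\), a contradiction.

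The paper repairs it differently, by counting dimensions. \(\operatorname{Ann}(w)=0\) gives \(\dim_kM=2^n\). On the other side, \(M\cap Eg_i=0\), \(M\cap Eg_{i'}=\operatorname{Ann}(e_\sigma)g_{i'}\), and \(M\) meets no other summand. Hence \(\sum_p\dim_k(M\cap Eg_p)=2^n-2^{n-s}<2^n\).

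The component argument is shorter. The count also says exactly how much of \(M\) its ids part misses, namely \(2^{n-s}\) dimensions.
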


\begin{proof}
(1) Let \(n\ge2\) and put \(I=(e_1+e_2)\subseteq E\), a graded ideal whose degree-\(1\) component is
the line spanned by \(e_1+e_2\) and therefore contains no monomial; so \(I\) is not a monomial ideal,
and \(Ig_1\oplus0\oplus\cdots\oplus0\) is an ids-module that is not a monomial submodule. Conversely
for \(n=1\) we have \(E=k[e]/(e^2)\), whose only graded ideals are \(0\), \((e)\) and \(E\), all
monomial;
so every ids-module has monomial components, and is then a monomial submodule.

(2) Suppose \(d_i\le d_{i'}\) with \(\delta:=d_{i'}-d_i\le n\). Choose \(\sigma\subseteq[n]\) with
\(\lvert\sigma\rvert=\delta\) and put \(v=e_\sigma g_i+g_{i'}\), which is homogeneous of degree
\(d_{i'}\), and \(M=Ev\). Looking at the \(g_{i'}\)-component, \(fv=0\) forces \(f=0\), so
\(\operatorname{Ann}(v)=0\) and \(\dim_kM=\dim_kE=2^n\). The same computation gives
\(M\cap Eg_i=0\), while \(fv\in Eg_{i'}\) holds exactly when \(fe_\sigma=0\), so
\(M\cap Eg_{i'}=\operatorname{Ann}(e_\sigma)g_{i'}\), of dimension \(2^n-2^{n-\delta}\), namely the
monomials \(e_\tau\) with \(\tau\cap\sigma\ne\emptyset\). As \(M\) meets no other summand,
\begin{equation}\label{eq:hierarchy-strict}
  \sum_{p}\dim_k(M\cap Eg_p)\;=\;2^n-2^{n-\delta}\;<\;2^n\;=\;\dim_kM ,
\end{equation}
so \(M\ne\bigoplus_p(M\cap Eg_p)\) and \(M\) is not an ids-module.

Conversely suppose \(\lvert d_i-d_{i'}\rvert>n\) for all \(i\ne i'\). The component \(Eg_i\) is
supported in degrees \([d_i,d_i+n]\), so no two components share a degree, and in each degree \(j\)
at most one is non-zero. Hence any graded \(M\) has \(M_j\subseteq Eg_{i(j)}\) for a single index
\(i(j)\), so \(M_j\subseteq M\cap Eg_{i(j)}\) and \(M=\sum_jM_j\subseteq\bigoplus_p(M\cap Eg_p)\subseteq M\).
\end{proof}

\begin{remark}
The gap condition in Proposition \ref{prop-hierarchy-strict}(2) is not an artefact. Widely separated
generator degrees make \(F\) split degreewise, and then there is simply nothing for a graded
submodule to couple: the classes coincide. The analogous threshold
reappears geometrically in Proposition \ref{prop-regimes-disjoint}, where non-overlapping windows turn
the Kozlov polytope into a Cartesian product --- but the two are off by one, and in the informative
direction. The classes coincide only for \(\lvert d_i-d_{i'}\rvert\ge n+1\), whereas the windows come
apart already at \(d\ge n\); at \(\lvert d_i-d_{i'}\rvert=n\) the polytope is a Cartesian product
while the algebra still couples, the construction of
Proposition \ref{prop-hierarchy-strict}(2) working at \(\delta=n\) with \(\lvert\sigma\rvert=n\). The
discrepancy is exactly the degree-\(0\) coordinate the geometric windows drop. Example \ref{ex-nonids} is the overlapping case with
\(\delta=1\); the case \(\delta=0\) is \(M=E\cdot(g_1+g_2)\) of Remark \ref{rem-ac-reduction}.
\label{rem-hierarchy-gaps}
\end{remark}

\begin{mexample}
Take \(n=3\), \(r=2\), \((d_1,d_2)=(0,1)\), and
\begin{equation}\label{eq:nonids}
  v\;=\;e_1e_2\,g_1+e_1\,g_2,\qquad M\;=\;E\cdot v .
\end{equation}
Both terms have degree \(2\) in \(F\) (namely \(2+0\) and \(1+1\)), so \(v\) is homogeneous.
Computing in Macaulay2: \(H_{F/M}=(1,4,5,2,0)\), while \(M\cap Eg_1=0\) and
\(M\cap Eg_2=(e_1e_2)g_2\), so the largest ids-module contained in \(M\) is
\(0\oplus(e_1e_2)g_2\), whose quotient has Hilbert function \((1,4,6,3,0)\ne(1,4,5,2,0)\). Hence
\(M\ne(M\cap Eg_1)\oplus(M\cap Eg_2)\), and \(M\) is not an ids-module.
\label{ex-nonids}
\end{mexample}

The mechanism is worth naming: \emph{every} non-zero element of \(M\) has a non-zero \(g_2\)-component, so
no non-zero element of \(M\) lies inside a single summand of \(F\). Such an \(M\) can produce a
Hilbert function that is not a shifted sum of \(f\)-vectors at all, which is exactly why
Theorem \ref{thm-main} is stated for ids-modules and not for graded submodules.

\begin{remark}
Amata and Crupi~\cite{AmataCrupi2020KK,AmataCrupi2020} For \emph{any} graded \(M\subseteq F\), \(\inn_>(M)\) is a monomial
submodule, and by Theorem \ref{thm-invariance}, \(H_{F/M}=H_{F/\inn_>(M)}\). So every graded submodule's
Hilbert function is realized by a monomial submodule --- but not necessarily by a \textbf{proper} one, and
this is the sharp point.

Take \(n=2\), \(r=2\), \(d_1=d_2=0\), \(M=E\cdot(g_1+g_2)\). Then \(H_{F/M}=(1,2,1)\), and indeed
\(\inn(M)=0\cdot g_1\oplus E\cdot g_2\), so one generator survives entirely and the other is killed
entirely, so \(\Delta_1\) is the full complex and \(\Delta_2\) is the void complex, with
\(f_1(\Delta_2)=0\ne n\). No proper monomial submodule has Hilbert function \((1,2,1)\) here, since a
proper one would force \(H_{F/N}(1)=f_1(\Delta_1)+f_1(\Delta_2)=4\). So Example \ref{ex-nonids} is not an
artefact of a bad choice: properness genuinely restricts the achievable set, and the restriction
cannot be recovered by choosing a cleverer term order.
\label{rem-ac-reduction}
\end{remark}

Inside the ids class, by contrast, nothing goes wrong.

\begin{proposition}
Let \(M=I_1g_1\oplus\cdots\oplus I_rg_r\) be an ids-module and let \(>\) be \emph{any} monomial order on
\(F\). Then
\begin{enumerate}
\item The initial submodule splits along the same decomposition,
\[
     \inn_>(M)=\inn_>(I_1)g_1\oplus\cdots\oplus\inn_>(I_r)g_r ;
   \]
in particular \(\inn_>(M)\) is a monomial submodule, and an ids-module with the expected
components.
\item \(\inn_>(M)\) is proper if and only if \(M\) is; that is, \(\inn_>(M)\) is an Amata--Crupi
monomial submodule exactly when \(M\) is a proper ids-module.
\end{enumerate}
\label{prop-in-ids}
\end{proposition}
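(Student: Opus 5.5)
The plan is to prove (1) directly from the definition of \(\inn_>\) as the \(k\)-span of leading monomials, by tracking which summand the leading monomial of an element of \(M\) lands in. Then (2) follows from (1) together with the degreewise dimension count in the proof of Theorem \ref{thm-invariance}(1).

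First I fix what \(\inn_>(I_i)\) means, since \(>\) is an order on the monomials of \(F\) and not of \(E\). For each \(i\), put \(e_\sigma>_i e_\tau\) iff \(e_\sigma g_i>e_\tau g_i\). This is a total order on the monomials of \(E\). It inherits compatibility with multiplication from that of \(>\): if \(e_\sigma g_i>e_\tau g_i\) and \(e_\rho e_\sigma,e_\rho e_\tau\) are both nonzero, then \(e_\rho e_\sigma g_i>e_\rho e_\tau g_i\). So \(>_i\) is a monomial order on \(E\), and \(\inn_>(I_i)\) is read as \(\inn_{>_i}(I_i)\). By construction, for \(u\in E\) the leading monomial of \(ug_i\) in \(F\) is \(\mathrm{LM}_{>_i}(u)\,g_i\).

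For (1), the inclusion \(\supseteq\) is immediate. If \(u\in I_i\), then \(ug_i\in M\), and its leading monomial \(\mathrm{LM}_{>_i}(u)g_i\) therefore lies in \(\inn_>(M)\); taking spans gives \(\inn_>(I_i)g_i\subseteq\inn_>(M)\) for every \(i\).

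For \(\subseteq\), take a nonzero \(w\in M\) and use the ids hypothesis to write \(w=\sum_i u_ig_i\) with \(u_i\in I_i\). The monomials occurring in different \(u_ig_i\) are distinct, because they sit on different generators, so no cancellation occurs between summands. Hence \(\mathrm{LM}(w)\) is the largest of the \(\mathrm{LM}(u_ig_i)\) over the \(i\) with \(u_i\neq0\). That is \(\mathrm{LM}_{>_i}(u_i)g_i\) for a single \(i\), which lies in \(\inn_>(I_i)g_i\). Taking spans, and recalling that the sum \(\sum_i\inn_>(I_i)g_i\) is direct inside \(F\), gives the displayed splitting. Each \(\inn_>(I_i)\) is a monomial ideal, so \(\inn_>(M)\) is a monomial submodule and, visibly, an ids-module with the expected components; this is consistent with Proposition \ref{prop-hierarchy}(1).

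For (2), apply the echelon argument of Theorem \ref{thm-invariance}(1) to each \(I_i\subseteq E\) separately with the order \(>_i\). This gives \(\dim_k(\inn_>(I_i))_j=\dim_k(I_i)_j\) in every degree \(j\), so \((I_i)_{\le1}=0\) if and only if \((\inn_>(I_i))_{\le1}=0\). For the monomial ideal \(\inn_>(I_i)\), having zero component in degrees \(\le1\) is the same as having no minimal generator of degree \(\le1\), that is \(f_1(\Delta_i)=n\), which is the Amata--Crupi condition of Definition \ref{def-monomial-submodule}. Combining this with (1), \(\inn_>(M)\) is proper exactly when every \(I_i\) is, i.e.\ when \(M\) is proper.

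The only step needing care is the first: setting up the induced orders \(>_i\) and checking that they really are monomial orders on \(E\), so that \(\inn_>(I_i)\) is meaningful and is an ideal. The no-cancellation observation is then where the ids hypothesis is used. It is exactly what fails for the coupled element \(v\) of Example \ref{ex-nonids}, where \(M\) has no decomposition \(w=\sum_i u_ig_i\) with each \(u_ig_i\in M\).
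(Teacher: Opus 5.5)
Your proposal is correct and is essentially the paper's proof. Part (1) rests on the same observation: monomials on different generators cannot cancel, so the leading monomial of \(\sum_i u_ig_i\) is the largest of the \(\inn_>(u_i)g_i\), and the reverse inclusion comes from \(u_ig_i\in M\). Part (2) uses the same degreewise dimension equality, Theorem~\ref{thm-invariance}(1) applied in rank one to each \(I_i\). Your explicit construction of the induced orders \(>_i\) on \(E\) makes precise what the paper leaves implicit when it writes \(\inn_>(I_i)\), but it does not change the argument.
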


\begin{proof}
(1) Every monomial of \(F\) lies in a single component \(Eg_i\). Take \(0\ne z\in M\) and write
\(z=\sum_i f_ig_i\) with \(f_i\in I_i\); the monomials occurring in \(z\) are exactly those occurring
in the \(f_i\), each tagged by its own generator, so \(\inn_>(z)=\inn_>(f_j)g_j\) for whichever
\(j\) maximizes \(\inn_>(f_i)g_i\) among the non-zero components. Hence
\(\inn_>(M)\subseteq\bigoplus_i\inn_>(I_i)g_i\). Conversely \(f_ig_i\in M\) for every \(f_i\in I_i\)
and every \(i\), so \(\inn_>(f_i)g_i\in\inn_>(M)\), which gives the reverse inclusion. No
compatibility between \(>\) and the direct-sum decomposition is needed: a position-over-term order, a
term-over-position order, or anything else will do, because the argument only compares monomials, and
every monomial already carries its component with it.

(2) By Theorem \ref{thm-invariance} applied in rank one, \(H_{E/I_i}=H_{E/\inn_>(I_i)}\) for each \(i\).
Properness of a rank-one quotient is a condition on its Hilbert function alone --- it says
\(H_{E/I_i}(1)=n\), so \(I_i\) is proper if and only if \(\inn_>(I_i)\) is. Now apply (1)
componentwise.
\end{proof}

This sharpens Remark \ref{rem-ac-reduction} in the one place where sharpening is possible. For a general
graded \(M\), the reduction says only that \(\inn_>(M)\) is \textbf{some} monomial submodule, and
Remark \ref{rem-ac-reduction}'s example shows properness can be destroyed outright. Inside the ids class
neither happens. So the loss of control is located precisely at the ids boundary, not at the monomial
one --- a second reason, independent of Example \ref{ex-nonids}, to state Theorem \ref{thm-main} where we do.
A consequence worth recording: in Theorem \ref{thm-main} one may replace "proper monomial submodule" by
"proper ids-module" throughout without changing the set whose hull is being taken. We keep the
monomial phrasing because it is Amata and Crupi's, and because it is the one their software
implements.
\section{The combinatorial gadget: \(r\)-vectors of complexes and their \emph{ff}-vectors}
\label{sec-combinatorial}
\begin{definition}
Let \(n\ge1\) and \(r\ge1\). An \textbf{\(r\)-vector of simplicial complexes on
\([n]\)} is a tuple \(\vec\Delta=(\Delta_1,\dots,\Delta_r)\) with each \(\Delta_i\in\mathcal S_n\), chosen
\textbf{independently}: no relation whatever is imposed between them. Fix a shift vector
\(\vec d=(d_1,\dots,d_r)\) with \(0=d_1\le\cdots\le d_r\), and put \(N=n+d_r\). Extend
\(f_j(\Delta_i)\) by \(0\) for \(j<0\) and for \(j>n\). The \textbf{\emph{ff}-vector} of \(\vec\Delta\) with
respect to \(\vec d\) is
\begin{equation}\label{eq:ffvector}
  \ff(\vec\Delta)_j\;=\;\sum_{i=1}^{r}f_{j-d_i}(\Delta_i),\qquad j=0,1,\dots,N .
\end{equation}
\label{def:ffvector}
\label{def-ffvector}
\end{definition}

\begin{mexample}
Let \(n=3\), \(r=3\) and \(\vec d=(0,1,1)\), so \(N=4\); nothing forbids the tie \(d_2=d_3\). Take
\(\Delta_1\) the full simplex on \([3]\), \(\Delta_2\) its boundary (every subset of size at most
\(2\)) and \(\Delta_3\) the three isolated vertices, with \(f\)-vectors
\((1,3,3,1)\), \((1,3,3,0)\) and \((1,3,0,0)\). Each \(j\) sums one entry from each complex, read at
its own offset \(j-d_i\):
\begin{equation}\label{eq:ffvector-ex}
  \ff(\vec\Delta)_2\;=\;f_2(\Delta_1)+f_1(\Delta_2)+f_1(\Delta_3)\;=\;3+3+3\;=\;9,
\end{equation}
and altogether \(\ff(\vec\Delta)=(1,5,9,4,0)\). The two ends are where the extension by \(0\) does its
work: \(\ff_0=1\) because \(f_{-1}(\Delta_2)=f_{-1}(\Delta_3)=0\), and \(\ff_4=0\) because
\(f_4(\Delta_1)=0\) and \(f_3\) of the other two vanishes.

Two checks are worth having in hand. The total \(1+5+9+4+0=19\) is the total number of faces,
\(8+7+4\), since every face of every \(\Delta_i\) is counted exactly once. And
\(\ff_0=\lvert\lbrace i:d_i=0\rbrace\rvert\) always, each \(\Delta_i\in\mathcal S_n\) having
\(f_0=1\): the \(0\)-th entry counts the generators sitting in degree \(0\) and nothing else. That
last identity is the combinatorial shadow of the translation term in \eqref{eq:koz-as-ra} --- the
entry \(\proj\) forgets is exactly the one those generators contribute.
\label{ex-ffvector}
\end{mexample}

The doubled letter is deliberate: it marks an invariant of the \emph{tuple}, keeping it visually distinct
from the \(f\)-vector of a single complex.

\begin{definition}
For \(\vec\Delta\) as above, put
\begin{equation}\label{eq:indicator-module}
  M(\vec\Delta)\;=\;\bigl(I_{\Delta_1}\bigr)g_1\oplus\cdots\oplus\bigl(I_{\Delta_r}\bigr)g_r,
\end{equation}
which is an Amata--Crupi monomial submodule of \(F\), and call the quotient
\(\IM(\vec\Delta):=F/M(\vec\Delta)\) the \textbf{indicator module} of \(\vec\Delta\).
\label{def-indicator-module}
\end{definition}

For \(r=1\), \(d_1=0\) this is Sköldberg's indicator algebra \(k\lbrace\Delta\rbrace\) exactly.

\begin{proposition}
Let \(M=I_1g_1\oplus\cdots\oplus I_rg_r\) be an ids-module, with \(\Delta_i\) the downward-closed
family attached to \(\inn(I_i)\). Then
\begin{equation}\label{eq:splitting}
  F/M\;\cong\;\bigoplus_{i=1}^{r}(E/I_i)(-d_i),
  \qquad H_{F/M}(t)\;=\;\sum_{i=1}^{r}t^{d_i}H_{E/I_i}(t),
\end{equation}
and coordinatewise
\begin{equation}\label{eq:splitting-2}
  H_{F/M}(j)\;=\;\sum_{i=1}^{r}H_{E/I_i}(j-d_i)\;=\;\sum_{i=1}^{r}f_{j-d_i}(\Delta_i).
\end{equation}
In particular \(H_{\IM(\vec\Delta)}=\ff(\vec\Delta)\).
\label{prop:splitting}
\label{prop-splitting}
\end{proposition}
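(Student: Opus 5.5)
The plan is to build the isomorphism directly from the ids decomposition, then read off dimensions degree by degree, and only at the end bring in initial ideals to translate Hilbert functions into face counts.

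\textbf{Step 1: the module isomorphism.} Consider the surjection \(F=\bigoplus_iEg_i\to\bigoplus_i(E/I_i)(-d_i)\) that sends \(\sum_ia_ig_i\) to \((a_i+I_i)_i\). It is \(E\)-linear because \(E\) acts componentwise. It is graded because \(g_i\) has degree \(d_i\), and the shift \((-d_i)\) places \(1+I_i\) in degree \(d_i\). Its kernel is \(\{\sum a_ig_i: a_i\in I_i\text{ for all }i\}\), which is \(I_1g_1\oplus\cdots\oplus I_rg_r=M\) by Definition \ref{def-ids}. Since \(Eg_i\cong E(-d_i)\) is free, the map \(a\mapsto ag_i\) is injective, so this description of the kernel is exact. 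This gives the first half of \eqref{eq:splitting}.

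\textbf{Step 2: Hilbert functions.} Taking \(\dim_k\) in degree \(j\) of the isomorphism gives \(H_{F/M}(j)=\sum_i\dim_k(E/I_i)_{j-d_i}=\sum_iH_{E/I_i}(j-d_i)\). Multiplying by \(t^j\) and summing over \(j\) yields the series form. Here we use the convention that \(H_{E/I_i}\) vanishes outside \([0,n]\), which matches the extension by zero in Definition \ref{def-ffvector}.

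\textbf{Step 3: passing to face counts.} By Theorem \ref{thm-invariance}(1), applied in rank one, \(H_{E/I_i}=H_{E/\inn(I_i)}\). The ideal \(\inn(I_i)\) is monomial, so by Proposition \ref{prop-dictionary}(2) it equals \(I_{\Delta_i}\), where \(\Delta_i\) is the attached downward-closed family. Proposition \ref{prop-dictionary}(1) then gives \(H_{E/I_{\Delta_i}}(j)=f_j(\Delta_i)\). This proves \eqref{eq:splitting-2}.

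\textbf{Step 4: the indicator module.} For \(\vec\Delta\), the module \(M(\vec\Delta)\) is an ids-module with \(I_i=I_{\Delta_i}\), which is already monomial. So \(\inn(I_i)=I_i\) and the attached family is \(\Delta_i\) itself. Comparing \eqref{eq:splitting-2} with \eqref{eq:ffvector} gives \(H_{\IM(\vec\Delta)}=\ff(\vec\Delta)\).

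None of these steps is a real obstacle. The point needing most care is Step 3: \(\Delta_i\) is defined through \(\inn(I_i)\), so we must invoke invariance of the Hilbert function under taking initial ideals rather than identify \(I_i\) with \(I_{\Delta_i}\). We also need the bookkeeping convention that \(f_j=0\) outside \([0,n]\). Proposition \ref{prop-in-ids} is not needed here, since the argument is componentwise from the start.
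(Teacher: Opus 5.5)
Your proof is correct and is essentially the paper's argument. The paper notes that \(F_j=\bigoplus_i E_{j-d_i}g_i\) and \(M_j=\bigoplus_i (I_i)_{j-d_i}g_i\), so the quotient splits degreewise; this is your explicit surjection with kernel \(M\), read one degree at a time. It then passes to face counts exactly as in your Step 3, via Theorem~\ref{thm-invariance} and Proposition~\ref{prop-dictionary}(1), and your Step 4 spells out the closing identification with \(\ff(\vec\Delta)\), which the paper leaves implicit.
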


\begin{proof}
Because \(M\) is a direct sum, we have \(F_j=\bigoplus_i(E_{j-d_i})g_i\) and
\(M_j=\bigoplus_i(I_i)_{j-d_i}g_i\) in every degree, so the quotient splits degreewise and the
isomorphism is the evident one. The last
equality is Proposition \ref{prop-dictionary}(1) applied to each summand, together with
Theorem \ref{thm-invariance} to replace \(I_i\) by \(\inn(I_i)\).
\end{proof}

\begin{definition}
For \(d\ge0\) let \(\shift^d:\mathbb R^{n+1}\to\mathbb R^{N+1}\) prepend \(d\) zeros to the whole
vector --- degree-\(0\) entry included --- and pad with zeros on the right.
\label{def-shift}
\end{definition}

This is a \emph{linear} map, namely the coordinate inclusion \(e_j\mapsto e_{j+d}\). Linearity, rather
than the affine "translation" one might reach for first, is what makes Lemma \ref{lem-convex}(B)
applicable and what makes Theorem \ref{thm-main} come out with no leftover translation vector.

\begin{proposition}
With \(S_n\) as in Definition \ref{def-proper} and \(d_1=0\le d_2\le\cdots\le d_r\),
\begin{equation}\label{eq:sumset}
  \bigl\lbrace H_{F/M}: M\ \text{a proper monomial submodule of }F\bigr\rbrace
  \;=\;\bigl\lbrace\ff(\vec\Delta):\vec\Delta\ \text{an }r\text{-vector on }[n]\bigr\rbrace,
\end{equation}
and both sets are equal to
\begin{equation}\label{eq:sumset-2}
  S_n+\shift^{d_2}(S_n)+\cdots+\shift^{d_r}(S_n),
\end{equation}
an ordinary Minkowski sum of finite subsets of \(\mathbb Z^{N+1}\), the sum being over the \emph{multiset}
\(\lbrace d_1,\dots,d_r\rbrace\) so that repeated degrees contribute repeated summands.
\label{prop:sumset}
\label{prop-sumset}
\end{proposition}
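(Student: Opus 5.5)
The plan is to prove the two equalities by showing that all three sets coincide, passing through the \emph{ff}-vector description. The link between them is Proposition \ref{prop-splitting}.

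\textbf{Step 1 (Hilbert functions to \emph{ff}-vectors).} Let \(M\) be a proper monomial submodule of \(F\). By Proposition \ref{prop-hierarchy}(1) it is an ids-module \(M=I_1g_1\oplus\cdots\oplus I_rg_r\) with each \(I_i\) monomial. By Proposition \ref{prop-dictionary}(2) we have \(I_i=I_{\Delta_i}\) for a unique order ideal \(\Delta_i\in\mathcal D_n\). Properness says \(I_i\) has no minimal generator of degree \(\le1\). In degree \(0\) this gives \(I_i\ne E\), so \(\emptyset\in\Delta_i\); in degree \(1\) it gives no \(e_j\in I_i\), so every singleton lies in \(\Delta_i\). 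Hence \(\Delta_i\in\mathcal S_n\), and \(\vec\Delta=(\Delta_1,\dots,\Delta_r)\) is an \(r\)-vector with \(M=M(\vec\Delta)\). Proposition \ref{prop-splitting} then gives \(H_{F/M}=\ff(\vec\Delta)\).

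\textbf{Step 2 (\emph{ff}-vectors to Hilbert functions).} Conversely, for any \(r\)-vector \(\vec\Delta\) the module \(M(\vec\Delta)\) of Definition \ref{def-indicator-module} is a proper monomial submodule. It is generated by the monomials \(e_\sigma g_i\) with \(\sigma\notin\Delta_i\), and since each \(\Delta_i\) contains \(\emptyset\) and all singletons, no such \(\sigma\) has size \(\le1\). Its quotient has Hilbert function \(\ff(\vec\Delta)\), again by Proposition \ref{prop-splitting}. This proves \eqref{eq:sumset}.

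\textbf{Step 3 (the Minkowski sum).} Read \eqref{eq:ffvector} as a vector identity in \(\mathbb Z^{N+1}\):
\[
\ff(\vec\Delta)\;=\;\sum_{i=1}^r\shift^{d_i}\bigl(f(\Delta_i)\bigr),
\]
where \(f(\Delta_i)=(f_0(\Delta_i),\dots,f_n(\Delta_i))\in S_n\). This holds because \(\shift^{d}\) places \(f_{j-d}\) in coordinate \(j\) and zeros elsewhere, which matches the convention that \(f_{j-d_i}=0\) for \(j-d_i\) outside \([0,n]\); the condition \(d_i\le d_r\) keeps every entry inside \([0,N]\). Because the \(\Delta_i\) are chosen independently, and \(\Delta\mapsto f(\Delta)\) is onto \(S_n\) by definition, the set of \(\ff\)-vectors is exactly \(\{\sum_i\shift^{d_i}(s_i): s_i\in S_n\}\). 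That is the Minkowski sum \eqref{eq:sumset-2} over the multiset of shifts, with \(\shift^{0}\) equal to the identity since \(d_1=0\). Repeated \(d_i\) are harmless: each summand index carries its own independent choice.

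No step is a real obstacle. The only points needing care are the properness bookkeeping in Step 1, including that \(\emptyset\in\Delta_i\), and the index range in Step 3, both routine. Independence of the \(\Delta_i\) is exactly what turns the set of sums into a Minkowski sum rather than a proper subset of one.
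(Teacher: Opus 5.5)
Your proof is correct and follows the paper's argument. The first equality comes from the bijection \(\Delta\leftrightarrow I_\Delta\) of Proposition~\ref{prop-dictionary}(2) together with Proposition~\ref{prop-splitting}, and the second from the coordinatewise identity \(\ff(\vec\Delta)=\sum_i\shift^{d_i}(f(\Delta_i))\) plus the independence of the \(\Delta_i\). You also spell out the properness bookkeeping (that \(\emptyset\) and every singleton lie in \(\Delta_i\)), which the paper leaves implicit in ``a proper monomial submodule is precisely an \(M(\vec\Delta)\)''. One cosmetic point: \(\shift^{0}\) is the zero-padding inclusion \(\mathbb R^{n+1}\to\mathbb R^{N+1}\), not the identity, and that is how the unshifted summand \(S_n\) in \eqref{eq:sumset-2} has to be read anyway.
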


\begin{proof}
The first equality is Proposition \ref{prop-splitting} together with the bijection
\(\Delta\leftrightarrow I_\Delta\) of Proposition \ref{prop-dictionary}(2): a proper monomial submodule is
precisely an \(M(\vec\Delta)\), and its quotient's Hilbert function is \(\ff(\vec\Delta)\).

For the second, let \(s_i\in S_n\) be the \(f\)-vector of \(\Delta_i\). By construction
\(\shift^{d}(s)_j=f_{j-d}(\Delta)\) for \emph{every} \(j\in\lbrace0,\dots,N\rbrace\), since both sides are
already zero outside \(d\le j\le d+n\), by the convention \(f_j=0\) off \([0,n]\). Hence
\(\ff(\vec\Delta)_j=\sum_i\shift^{d_i}(s_i)_j\) for every \(j\), i.e.
\(\ff(\vec\Delta)=\sum_i\shift^{d_i}(s_i)\), one term drawn from each \(\shift^{d_i}(S_n)\). Since
the \(\Delta_i\) range independently and without constraint over \(\mathcal S_n\), the tuple
\((s_1,\dots,s_r)\) ranges over all of \(S_n\times\cdots\times S_n\), which is exactly the definition
of the Minkowski sum.
\end{proof}

The proof is immediate, deliberately. The content is not in any hard combinatorics but in choosing
the defining formula for \(\ff\) so that independence of the \(\Delta_i\) becomes, coordinatewise, a
sumset. Nothing here uses Kruskal--Katona or Kozlov, or any property of \(S_n\) beyond its being some
fixed finite subset of \(\mathbb Z^{n+1}\) all of whose elements begin \((1,n)\).
\section{The main theorem}
\label{sec-main}
Two definitions first. Both are used in the statement of the theorem, and both are about the
\textbf{shape} of the summands rather than about the algebra, so it is worth having them named before the
sum is formed.

\begin{definition}
Let \(\vec a=(a_1,\dots,a_n)\) be a vector of positive integers, called a \textbf{leg vector}. The
\textbf{right-angle simplex} \(\rasimp(\vec a)\) is the convex hull of the \(n\) points
\begin{equation}\label{eq:rasimp}
  v_j=(a_1,\dots,a_j,0,\dots,0),\qquad j=1,\dots,n .
\end{equation}
For \(0\le d_1\le\cdots\le d_r\) the \textbf{right-angle polytope} is the Minkowski sum of shifted copies
of it,
\begin{equation}\label{eq:rasimp-1}
  \RA(\vec a;d_1,\dots,d_r)\;=\;\sum_{i=1}^{r}\shift^{d_i}\bigl(\rasimp(\vec a)\bigr).
\end{equation}
\label{def-rasimp}
\end{definition}

The name records the shape: consecutive vertices differ in one coordinate, so the edges out of
\(v_1\) run along the coordinate axes and \(\rasimp(\vec a)\) is the simplex on a right-angled
corner with legs \(a_1,\dots,a_n\). The leg vector fixes the metric shape; the shift vector
\(\vec d\) fixes how the copies overlap.

\begin{definition}
The \textbf{Kozlov vector} is the leg vector of binomial coefficients
\begin{equation}\label{eq:kozvec}
  \kozvec(n)\;=\;\Bigl(\tbinom n1,\tbinom n2,\dots,\tbinom nn\Bigr),
\end{equation}
of length \(n\), not \(n+1\): a leg vector records the increments of a staircase, and
\(\rasimp\) of Definition \ref{def-rasimp} builds from it a simplex in \(\mathbb R^n\), so that
\(\proj(\Koz(n))=\rasimp(\kozvec(n))\). There is therefore no degree-\(0\) companion of
\(\kozvec\) to define: the projection is applied to the polytope, never to the leg vector.
\label{def:kozvec}
\label{def-kozvec}
\end{definition}

Since \(v_j=\tilde F_j\) for this leg vector, the Kozlov simplex of Section \ref{sec-kozlov} projects
to the right-angle simplex on it, and the Kozlov polytope to a translate of the corresponding
right-angle polytope:
\begin{equation}\label{eq:koz-as-ra}
  \proj\bigl(\Koz(n)\bigr)=\rasimp\bigl(\kozvec(n)\bigr),\qquad
  \proj\bigl(\Koz(n;d_1,\dots,d_r)\bigr)=\RA\bigl(\kozvec(n);d_1,\dots,d_r\bigr)
     +\sum_{i\,:\,d_i\ge1}e_{d_i} .
\end{equation}
The translation is unavoidable and is not a normalization one may drop: \(\proj\) forgets the
coordinate in degree \(0\) only, whereas \(\shift^{d_i}\) has already carried the \(i\)-th summand's
degree-\(0\) entry, which is \(1\), up to coordinate \(d_i\), where it survives the projection.
The sum therefore runs over the indices with \(d_i\ge1\), not over \(i\ge2\), a distinction that
matters exactly when two or more generators tie at degree \(0\); Remark \ref{rem-linearity} and
Section \ref{sec-vertices} say the same thing from the other side. At \(r=1\), \(d_1=0\) the term is
empty, which is the first equality. Taking instead the \textbf{all-ones leg vector}
\(\onesvec(n):=(1,\dots,1)\) of length \(n\) gives a
combinatorially simpler comparison object, used throughout Section \ref{sec-vertices}.

\begin{lemma}
(A) For \(U,V\subseteq\mathbb R^N\), \(\conv(U+V)=\conv(U)+\conv(V)\); by induction and
associativity of Minkowski addition the same holds for any finite number of summands.
(B) For linear \(L:\mathbb R^N\to\mathbb R^{N'}\) and \(U\subseteq\mathbb R^N\),
\(\conv(L(U))=L(\conv(U))\).
\label{lem-convex}
\end{lemma}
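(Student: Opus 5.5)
For (A) I would prove the two inclusions separately and then argue by induction. For \(\conv(U+V)\subseteq\conv(U)+\conv(V)\): since \(U\subseteq\conv(U)\) and \(V\subseteq\conv(V)\), we have \(U+V\subseteq\conv(U)+\conv(V)\). The right-hand side is convex, because a convex combination \(\lambda(u+v)+(1-\lambda)(u'+v')\) splits as \(\bigl(\lambda u+(1-\lambda)u'\bigr)+\bigl(\lambda v+(1-\lambda)v'\bigr)\). So the hull of \(U+V\) is contained in it.

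For the reverse inclusion, take \(x=\sum_p\alpha_pu_p\) and \(y=\sum_q\beta_qv_q\), both convex combinations. Using \(\sum_q\beta_q=1=\sum_p\alpha_p\), I would write
\[
x+y=\sum_{p,q}\alpha_p\beta_q\,(u_p+v_q).
\]
This is a convex combination of points of \(U+V\), since \(\alpha_p\beta_q\ge0\) and \(\sum_{p,q}\alpha_p\beta_q=1\). The extension to finitely many summands is then induction on the number of summands, using \(\conv(U_1+\cdots+U_k)=\conv(U_1+\cdots+U_{k-1})+\conv(U_k)\) and associativity of Minkowski addition. The only care needed is that finite sums of the form \(\sum\) are iterated binary sums.

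For (B), linearity gives \(L\bigl(\sum\lambda_pu_p\bigr)=\sum\lambda_pL(u_p)\). So the image of a convex combination of points of \(U\) is the same convex combination of points of \(L(U)\), and every convex combination of points of \(L(U)\) arises this way. Hence both sets consist of exactly the convex combinations of points \(L(u_p)\), and they are equal.

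No step is a genuine obstacle. The one point to watch is that (B) uses only that \(L\) preserves convex combinations, so affine maps would work as well. The paper stresses linearity only because \(\shift^d\) is linear, which lets the main theorem come out as \(\conv\sum_i\shift^{d_i}(S_n)=\sum_i\shift^{d_i}\conv(S_n)\) with no leftover translation term.
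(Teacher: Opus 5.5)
Your proof is correct and follows the paper's own argument: the first inclusion comes from convexity of \(\conv(U)+\conv(V)\) and minimality of the hull, the second from the product weights \(\alpha_p\beta_q\), and (B) from linearity preserving convex combinations. Your side remark that (B) also holds for affine maps is correct as well, though the paper only needs the linear case, for \(\shift^d\).
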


\begin{proof}
For the inclusion \(\subseteq\) in (A), observe first that \(\conv(U)+\conv(V)\) is convex: given
\(x_1+y_1\) and \(x_2+y_2\) in it and \(\lambda\in[0,1]\),
\begin{equation}\label{eq:convex}
  \lambda(x_1+y_1)+(1-\lambda)(x_2+y_2)
  =\bigl(\lambda x_1+(1-\lambda)x_2\bigr)+\bigl(\lambda y_1+(1-\lambda)y_2\bigr).
\end{equation}
It contains \(U+V\), and a convex hull is the smallest convex superset. (A, \(\supseteq\)) If \(x=\sum_p\lambda_pu_p\) and \(y=\sum_q\mu_qv_q\) are convex
combinations then \(x+y=\sum_{p,q}\lambda_p\mu_q(u_p+v_q)\) with \(\sum_{p,q}\lambda_p\mu_q=1\) and
all coefficients \(\ge0\), a convex combination of points of \(U+V\).
For (B), the identity \(L(\sum_p\lambda_pu_p)=\sum_p\lambda_pL(u_p)\) gives both inclusions at
once.
\end{proof}

\begin{theorem}
Let \(n\ge1\), \(r\ge1\), and \(0=d_1\le d_2\le\cdots\le d_r\), and let
\(F=\bigoplus_{i=1}^rEg_i\), \(\deg g_i=d_i\). Then, in the degree-\(0\)-inclusive coordinates of
Definition \ref{def-shift},
\begin{equation}\label{eq:main}
  \conv\bigl\lbrace H_{F/M}: M\ \text{proper monomial}\bigr\rbrace
  \;=\;\sum_{i=1}^{r}\shift^{d_i}\bigl(\Koz(n)\bigr)\;=\;\Koz(n;d_1,\dots,d_r),
\end{equation}
a Minkowski sum of \(r\) shifted copies of one and the same Kozlov simplex, one copy per generator
degree. Equivalently, by Proposition \ref{prop-sumset}, \(\Koz(n;d_1,\dots,d_r)\) is the convex hull of
the set of \emph{ff}-vectors of \(r\)-vectors of simplicial complexes on \([n]\).
\label{thm:main}
\label{thm-main}
\end{theorem}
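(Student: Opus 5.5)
The plan is to chain three results already in hand: the sumset identity (Proposition \ref{prop-sumset}), the commutation of convex hull with Minkowski sums and with linear maps (Lemma \ref{lem-convex}), and the definition \(\Koz(n)=\conv(S_n)\) (Definition \ref{def-kozsimplex}). No new combinatorics is needed. The only real care is in which coordinates the shift acts and in confirming that it is linear.

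First, by Proposition \ref{prop-sumset} the set whose hull we take equals the sumset \(S_n+\shift^{d_2}(S_n)+\cdots+\shift^{d_r}(S_n)\), with repeated degrees counted with multiplicity. Since \(d_1=0\), the first summand is \(\shift^{0}(S_n)\), where \(\shift^0\) is just the zero-padding inclusion \(\mathbb R^{n+1}\to\mathbb R^{N+1}\). All summands therefore have the uniform form \(\shift^{d_i}(S_n)\subseteq\mathbb R^{N+1}\).

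Second, apply Lemma \ref{lem-convex}(A), iterated over the \(r\) summands, to get
\[
\conv\Bigl(\sum_i\shift^{d_i}(S_n)\Bigr)=\sum_i\conv\bigl(\shift^{d_i}(S_n)\bigr).
\]
Then apply Lemma \ref{lem-convex}(B) to each summand with \(L=\shift^{d_i}\), giving \(\conv(\shift^{d_i}(S_n))=\shift^{d_i}(\conv S_n)=\shift^{d_i}(\Koz(n))\). This step needs \(\shift^{d_i}\) to be genuinely linear, not affine. That holds in the degree-\(0\)-inclusive coordinates, where it is the coordinate inclusion \(e_j\mapsto e_{j+d_i}\) (Definition \ref{def-shift}). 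This is exactly why the theorem is stated in those coordinates rather than after applying \(\proj\).

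Third, the right-hand side is by definition \(\Koz(n;d_1,\dots,d_r)\). The equivalent \emph{ff}-vector formulation is then immediate from the first equality of Proposition \ref{prop-sumset}.

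I expect no real obstacle. The one point to verify carefully is the bookkeeping that makes \(\shift^{d}(s)_j=f_{j-d}(\Delta)\) hold for all \(j\in\{0,\dots,N\}\). That is already done in the proof of Proposition \ref{prop-sumset}. Note also that Kozlov's theorem (Theorem \ref{thm-kozlov}) is not logically needed for the equality itself. It enters only to identify each summand concretely as the simplex described in Corollary \ref{cor-facets}.
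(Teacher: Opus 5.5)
Your proposal is correct and follows the paper's proof step for step. You use Proposition~\ref{prop-sumset} to turn the achievable set into a sumset, Lemma~\ref{lem-convex}(A) and (B) to pass the hull through the Minkowski sum and the linear maps \(\shift^{d_i}\), and \(\conv(S_n)=\Koz(n)\) to finish; your remark that this last identity holds by Definition~\ref{def-kozsimplex}, with Theorem~\ref{thm-kozlov} only making each summand concrete, is accurate and slightly sharper than the paper's citation of Kozlov's theorem at that step.
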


\begin{proof}
By Proposition \ref{prop-sumset} the achievable set is \(\sum_i\shift^{d_i}(S_n)\). By
Lemma \ref{lem-convex}(A) applied \(r-1\) times and then Lemma \ref{lem-convex}(B) applied to each linear map
\(\shift^{d_i}\),
\begin{equation}\label{eq:main-2}
  \conv\Bigl(\sum_{i=1}^r\shift^{d_i}(S_n)\Bigr)
  =\sum_{i=1}^r\conv\bigl(\shift^{d_i}(S_n)\bigr)
  =\sum_{i=1}^r\shift^{d_i}\bigl(\conv(S_n)\bigr),
\end{equation}
and \(\conv(S_n)=\Koz(n)\) by Theorem \ref{thm-kozlov} --- every element of \(S_n\) has degree-\(0\)
entry \(1\), so the hull lies in \(\lbrace x_0=1\rbrace\) and is the degree-\(0\)-inclusive copy of
the Kozlov simplex.
\end{proof}

The proof cites nothing beyond the definition of a convex hull and elementary linear algebra: step
one is Proposition \ref{prop-splitting} plus the definition of a monomial submodule, step two is
Proposition \ref{prop-sumset}, step three is Lemma \ref{lem-convex}, and step four is Theorem \ref{thm-kozlov},
itself proved above from Lemma \ref{lem-lym}. In particular the Gröbner-theoretic reduction of
Remark \ref{rem-ac-reduction} is \emph{not} used: it answers the different and broader question of which
Hilbert functions occur for arbitrary graded \(M\).

\begin{remark}
We call \(\Koz(n;d_1,\dots,d_r)\) the \textbf{Kozlov polytope} and reserve \textbf{Kozlov simplex} for
\(\Koz(n)=\Koz(n;0)\). The two are worth keeping apart by name because for \(r\ge2\) the sum is
essentially never a simplex: already \(\Koz(3;0,1)\) of Example \ref{ex-worked} is \(3\)-dimensional with
\(7\) vertices. A simplex is exactly what one loses by passing to rank \(r\), and it is the reason
Section \ref{sec-vertices} exists at all.
\label{rem-name}
\end{remark}

\begin{corollary}
Let \(c\ge0\) and \(d_1\ge0\). Writing \(\mathcal H(d_1,\dots,d_r)\) for the set of proper Hilbert
functions,
\begin{equation}\label{eq:normalization}
  \mathcal H(d_1+c,\dots,d_r+c)=\shift^c\bigl(\mathcal H(d_1,\dots,d_r)\bigr),
\end{equation}
\begin{equation}\label{eq:normalization-2}
  \Koz(n;d_1+c,\dots,d_r+c)=\shift^{c}\bigl(\Koz(n;d_1,\dots,d_r)\bigr),
\end{equation}
so Theorem \ref{thm-main} for \((d_1,\dots,d_r)\) is equivalent to Theorem \ref{thm-main} for
\((d_1+c,\dots,d_r+c)\), and one may normalize \(d_1=0\).
\label{cor-normalization}
\end{corollary}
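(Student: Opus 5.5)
The plan is to deduce both identities from the corresponding identity one level down, using only that \(\shift\) is linear and that shifts compose.

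The basic identity is \(\shift^{c}\circ\shift^{d}=\shift^{c+d}\). Here \(\shift^d\) and \(\shift^c\) are the coordinate inclusions \(e_j\mapsto e_{j+d}\) and \(e_j\mapsto e_{j+c}\), read with the appropriate ambient dimensions \(N=n+d_r\) and \(N+c\). Checking it on basis vectors is immediate.

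For \eqref{eq:normalization}, I go through the set of \emph{ff}-vectors. Proposition \ref{prop-sumset} is stated with \(d_1=0\), but its first equality and its proof never use that normalization. That equality comes from Proposition \ref{prop-splitting}, which holds for arbitrary \(d_i\), together with the termwise identity \(\shift^{d}(s)_j=f_{j-d}(\Delta)\). So for any \(d_1\ge0\) I get
\(\mathcal H(d_1,\dots,d_r)=\sum_i\shift^{d_i}(S_n)\).
Replacing each \(d_i\) by \(d_i+c\) gives
\[
\mathcal H(d_1+c,\dots,d_r+c)=\sum_i\shift^{c}\bigl(\shift^{d_i}(S_n)\bigr)=\shift^c\Bigl(\sum_i\shift^{d_i}(S_n)\Bigr).
\]
The second equality holds because a linear map carries a Minkowski sum of sets to the Minkowski sum of the images. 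Alternatively, I can argue directly: \(f_{j-(d_i+c)}(\Delta_i)=f_{(j-c)-d_i}(\Delta_i)\), so \(\ff\) with respect to the shifted vector is \(\shift^c\) of \(\ff\) with respect to the original.

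For \eqref{eq:normalization-2}, I apply the same computation with \(\Koz(n)\) in place of \(S_n\), directly from the definition \(\Koz(n;\vec d)=\sum_i\shift^{d_i}\Koz(n)\):
\[
\sum_i\shift^{d_i+c}\Koz(n)=\shift^c\Bigl(\sum_i\shift^{d_i}\Koz(n)\Bigr).
\]
Again this uses only linearity and the composition law. A cleaner route is to take convex hulls of \eqref{eq:normalization} using Lemma \ref{lem-convex}(B), which shows at once that the two statements of Theorem \ref{thm-main} correspond.

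The equivalence then follows. If \(\conv\mathcal H(\vec d)=\Koz(n;\vec d)\), I apply \(\shift^c\) to both sides and use Lemma \ref{lem-convex}(B) together with the two displayed identities to get the statement for \(\vec d+c\). For the converse I need \(\shift^c\) to be injective, which it is as a coordinate inclusion, and \(\shift^c\) of a convex hull equals the convex hull of \(\shift^c\), so I can pull the equality back.

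The only delicate point is bookkeeping rather than mathematics. Proposition \ref{prop-sumset} is formally stated under \(d_1=0\), so I must check that its proof never used that hypothesis, and I must keep the ambient dimensions consistent when composing shifts. I do not expect a genuine obstacle beyond this.
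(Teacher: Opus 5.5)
Your proof is correct. For the second identity and for the equivalence it is the paper's argument: linearity of \(\shift^c\), distributivity over Minkowski sums, \(\shift^c\circ\shift^{d}=\shift^{c+d}\), and Lemma~\ref{lem-convex}(B). Your explicit appeal to injectivity of \(\shift^c\) for the converse direction supplies a step the paper leaves tacit.

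The genuine difference is the first identity. The paper gets it by regrading: \(F(-c)\) is the same module with degrees relabelled \(j\mapsto j+c\). So submodules correspond, monomiality and properness are untouched (they concern only the ideals \(I_i\)), and Hilbert functions translate. You go instead through the combinatorics. You extend Proposition~\ref{prop-sumset} to \(d_1\ge0\) after checking that neither Proposition~\ref{prop-splitting} nor the identity \(\shift^{d}(s)_j=f_{j-d}(\Delta)\) used \(d_1=0\).

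The paper's route is independent of the splitting and of the dictionary. It also holds verbatim for any class of submodules defined without reference to the generator degrees: all graded ones, ids-modules, proper or not. Yours costs the re-audit you flag, but it yields more. Once \(\mathcal H(d_1,\dots,d_r)=\sum_i\shift^{d_i}(S_n)\) is known for all \(d_1\ge0\), the proof of Theorem~\ref{thm-main} runs unchanged for such shift vectors. The normalization then becomes unnecessary rather than merely permitted.

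One caution about your ``cleaner route''. Taking hulls of the first identity gives \(\conv\mathcal H(d_1+c,\dots,d_r+c)=\shift^c\bigl(\conv\mathcal H(d_1,\dots,d_r)\bigr)\). That is the second identity only if Theorem~\ref{thm-main} is already known at both shift vectors. It belongs in the equivalence step, where you use it correctly, and it cannot replace your direct Minkowski-sum computation.
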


\begin{proof}
The module \(F(-c)\) is the same module with its degrees relabelled by \(j\mapsto j+c\).
Submodules correspond and Hilbert functions translate, which gives the first identity. For the second, \(\shift^c\) is linear,
hence distributes over Minkowski sums, and \(\shift^c\circ\shift^{d}=\shift^{c+d}\); both facts are
independent of the number of summands, so nothing here is \(r\)-specific. Finally
\(\conv\circ\shift^c=\shift^c\circ\conv\) by Lemma \ref{lem-convex}(B).
\end{proof}

\begin{remark}
The linearity of \(\shift\) is exactly what makes this work for every \(r\). A \emph{translation} \(t\)
would satisfy \(t(A+B)=t(A)+B\), not \(t(A)+t(B)\), and the \(r\)-fold version would pick up a factor
\(r\). This is why the theorem is stated in degree-\(0\)-inclusive coordinates, in which the
operation is genuinely linear, and it is the reason the alternative coordinates of
Section \ref{sec-vertices}, which drop each summand's leading \(1\), need an explicit translation
term to compare with.
\label{rem-linearity}
\end{remark}

\begin{proposition}
(\emph{Edge cases.}) Theorem \ref{thm-main} behaves as follows at its boundary.
\begin{enumerate}
\item For \(r=1\): then \(d_1=0\), \(F=E\), \(M=I_1\), and the statement is
\(\conv(S_n)=\Koz(n)\), which is Kozlov's theorem, with no index shift and no leftover
translation.
\item For tied degrees \(d_i=d_{i'}\): the conclusion is unchanged. Nothing in the proof compares
\(d_i\) to \(d_{i'}\), and Proposition \ref{prop-sumset}'s sumset is over a multiset.
\item For \(n=1\): \(\mathcal S_1\) has a single element, so \(S_1=\lbrace(1,1)\rbrace\) and
\(\Koz(1)\) is a point; both sides reduce to the single point \(\sum_i\shift^{d_i}(1,1)\).
\item For \(n=0\) the statement is excluded, since the chain of identifications it rests on breaks
rather than degenerating: the vertex list \(\tilde F_1,\dots,\tilde F_n\) of
Theorem \ref{thm-kozlov} is empty at \(n=0\), so that theorem has no content there and
Lemma \ref{lem-mu}'s simplex is not defined. (Both sides of Theorem \ref{thm-main} are in fact a
point, \(\mathcal S_0=\lbrace\lbrace\emptyset\rbrace\rbrace\) giving \(S_0=\lbrace(1)\rbrace\); it
is the proof, not the statement, that has nothing to work with.)
\item Dropping properness replaces \(\Koz(n)\) by
\(\conv\lbrace0,\tilde F_1,\dots,\tilde F_n\rbrace\) in every summand.
\end{enumerate}
\label{prop:edge-cases}
\label{prop-edge-cases}
\end{proposition}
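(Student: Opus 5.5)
The plan is to check the five items one at a time. Each reduces to an earlier result, so no new combinatorics should be needed.

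Item (1): with \(r=1\) the normalization \(d_1=0\) forces \(F=E\) and \(\shift^{0}=\mathrm{id}\). Proposition \ref{prop-sumset} then gives the achievable set as \(S_n\), and the theorem reads \(\conv(S_n)=\Koz(n)\). This is Definition \ref{def-kozsimplex}, and after \(\proj\) it is Theorem \ref{thm-kozlov}. No translation term appears because \eqref{eq:koz-as-ra} sums only over the indices with \(d_i\ge1\), and there are none.

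Item (2): reread the proof of Theorem \ref{thm-main} and observe that it uses only linearity of each \(\shift^{d_i}\) and Lemma \ref{lem-convex}, applied summand by summand. Proposition \ref{prop-sumset} is already stated over a multiset, so repeated degrees give repeated summands and the argument is unchanged.

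Item (3): for \(n=1\), a complex on \([1]\) must contain \(\emptyset\) and \(\{1\}\), and these exhaust \(\mathcal P([1])\). So \(\mathcal S_1\) is a singleton, \(S_1=\{(1,1)\}\), and \(\Koz(1)\) is that point. The sumset and the Minkowski sum are then both the single point \(\sum_i\shift^{d_i}(1,1)\).

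Item (4): this is a remark rather than a theorem, and I would treat it as such. Compute \(\mathcal S_0=\{\{\emptyset\}\}\), so \(S_0=\{(1)\}\) and both sides of \eqref{eq:main} equal the point \(\sum_i\shift^{d_i}(1)\). Then point out that Lemma \ref{lem-mu} has no vertices when \(n=0\), which is why that case is excluded from the proof rather than from the statement.

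Item (5): drop properness, so each \(\Delta_i\) ranges over the non-empty order ideals. The first part of Proposition \ref{prop-sumset} goes through verbatim with \(S_n\) replaced by \(S_n^{+}\). Its second part used nothing about \(S_n\) beyond finiteness. Lemma \ref{lem-convex} then turns the hull into \(\sum_i\shift^{d_i}\conv(S_n^{+})\). By Proposition \ref{prop-permissive-hull}, \(\conv(S_n^{+})=\rasimp(\vec b)\), which projects to \(\conv\{0,\tilde F_1,\dots,\tilde F_n\}\).

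The only delicate point is the meaning of ``dropping properness''. If the empty order ideal is also admitted, then \(H=0\) occurs and the hull changes, as the end of Section \ref{sec-permissive} shows. So I would state explicitly that ``not proper'' means convention (ii), that is, each \(I_i\neq E\). I would also interpret ``replaces \(\Koz(n)\) by \(\conv\{0,\tilde F_1,\dots\}\)'' in the projected sense of Proposition \ref{prop-permissive-hull}, equivalently \(\rasimp(\vec b)\) before projection.
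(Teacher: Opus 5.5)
Your proposal is correct and follows the paper's proof. The paper likewise reads (1)--(4) off the statement, using Kozlov's theorem and the explicit description of \(\mathcal S_1\) and \(\mathcal S_0\), and gets (5) by applying Proposition \ref{prop-permissive-hull} summand by summand, which is what your rerun of the sumset argument with \(S_n^{+}\) spells out. Your stipulation that ``dropping properness'' means convention (ii), i.e.\ \(I_i\neq E\), is exactly the reading that appeal to Proposition \ref{prop-permissive-hull} tacitly assumes, and it is worth keeping: admitting \(I_i=E\) would change the unprojected summands.
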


\begin{proof}
(1)--(4) are read off the statement, given Kozlov's theorem for (1) and the description of
\(\mathcal S_n\) for (3) and (4); (5) is Proposition \ref{prop-permissive-hull} applied summand by summand.
\end{proof}

Ties at the \textbf{initial} degree are worth a word, because they are exactly where the exact
characterization of the companion paper has been misstated in the literature.

\begin{mexample}
Let \(n=3\), \(r=2\), \(d=(0,1)\). The simplex \(\proj(\Koz(3))\) has vertices \((3,0,0)\),
\((3,3,0)\), \((3,3,1)\).
Take \(\Delta_1\) the full simplex on \([3]\) (so \(I_{\Delta_1}=0\), \(f(\Delta_1)=(1,3,3,1)\)) and
\(\Delta_2\) the three isolated vertices (so \(I_{\Delta_2}=(e_1e_2,e_1e_3,e_2e_3)\),
\(f(\Delta_2)=(1,3,0,0)\)). Then \(\ff(\vec\Delta)=(1,4,6,1,0)\); for instance
\(\ff_2=f_2(\Delta_1)+f_1(\Delta_2)=3+3=6\). Building
\(M(\vec\Delta)=I_{\Delta_1}g_1\oplus I_{\Delta_2}g_2\) in Macaulay2 gives
\(H_{\IM(\vec\Delta)}=(1,4,6,1,0)\), as Proposition \ref{prop-splitting} requires.

The polytope
\begin{equation}\label{eq:worked}
  \Koz(3;0,1)\;=\;\Koz(3)+\shift^1\bigl(\Koz(3)\bigr)\;\subseteq\;\mathbb R^5 ,
\end{equation}
is \(3\)-dimensional, not \(4\),
with \(7\) vertices:

\begin{equation}\label{eq:worked-2}
  (1,4,3,0,0),\quad (1,4,3,3,0),\quad (1,4,3,3,1),\quad (1,4,6,0,0),
\end{equation}
\begin{equation}\label{eq:worked-3}
  (1,4,6,3,1),\quad (1,4,6,4,0),\quad (1,4,6,4,1).
\end{equation}
Note that \(\ff(\vec\Delta)=(1,4,6,1,0)\) is \emph{not} a vertex. It is not interior either: it lies on
the boundary, being tight on the two facet inequalities \(x_2\le6\) and \(x_4\ge0\): its
\(H(2)=6\) is the largest value the sum admits in that degree and its \(H(4)=0\) the smallest. What
it exhibits is an achievable lattice point that is not a vertex, which is all the distinction between
the two questions needs.

Figure \ref{fig:summands-and-sum} draws the same construction at \(n=3\) and \(\vec d=(0,1)\) for two
other leg vectors, the all-ones one and \(\vec a=(1,2,1)\), rather than for the Kozlov vector
\(\kozvec(3)=(3,3,1)\) of this example. Both have polytopal face-vector \((1,7,11,6,1)\): seven
vertices, eleven edges, six
facets. They are therefore combinatorially the \emph{same} polytope and differ only in shape --- as does
the Kozlov sum of this example, which is why the figure loses nothing by not drawing it. That is not
an accident of the particular leg vectors but forced at this size: \(m=n-d=2\) here, and
Section \ref{sec-vertices} shows that no positive leg vector can produce a different face lattice
until \(m=3\).
\label{ex-worked}
\end{mexample}

Each summand is a triangle, a two-dimensional right-angle simplex, and the two sit in different
coordinate planes, overlapping in the single coordinate \(x_3\); the Minkowski sum of two
triangles meeting in this way is the three-dimensional body in the right-hand column. Comparing
the rows of the figure, the leg vector stretches the second leg of each triangle, and that is the
whole of the difference between them.

\begin{figure}[htbp]
\centering
\setlength{\tabcolsep}{2pt}
\begin{tabular}{@{}c@{\ }c@{\ }c@{\ }c@{\ }c@{}}
{\footnotesize $\rasimp(\vec a)$} & &
{\footnotesize $\shift^1\rasimp(\vec a)$} & &
{\footnotesize $\RA(\vec a;0,1)=\rasimp(\vec a)+\shift^1\rasimp(\vec a)$}\\[-2pt]
$\vcenter{\hbox{\includegraphics[width=0.27\linewidth]{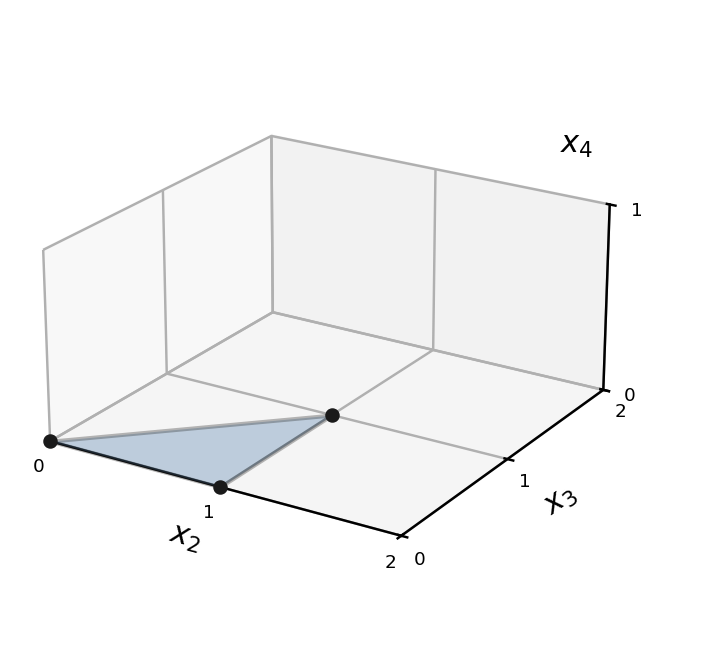}}}$ & $+$ &
$\vcenter{\hbox{\includegraphics[width=0.27\linewidth]{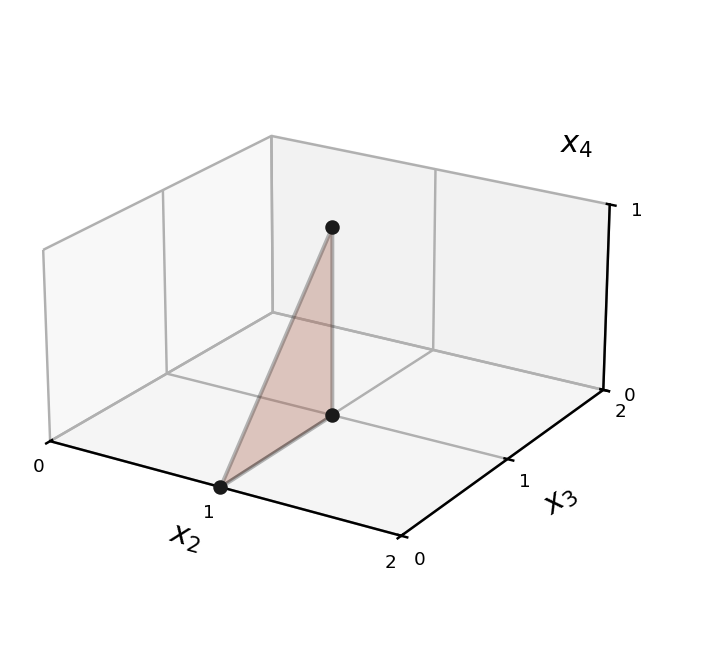}}}$ & $=$ &
$\vcenter{\hbox{\includegraphics[width=0.27\linewidth]{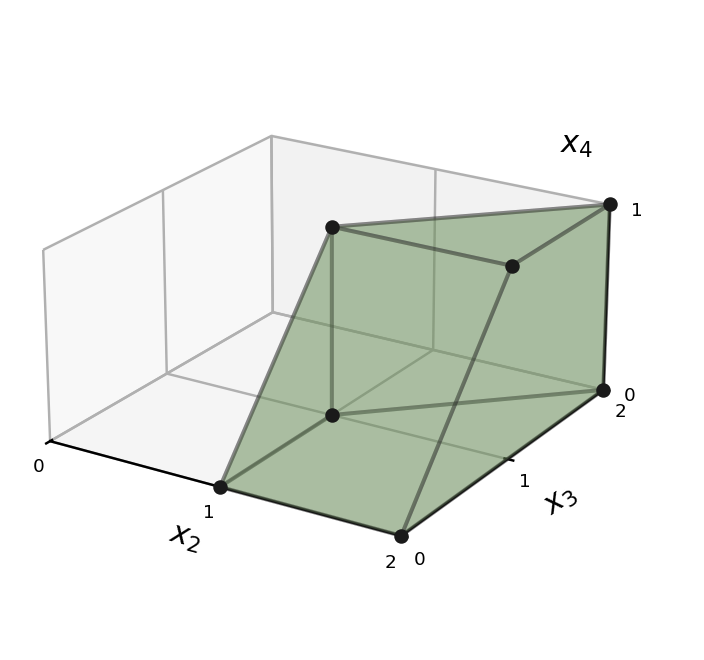}}}$\\[-8pt]
\multicolumn{5}{c}{\footnotesize $\vec a=(1,1,1)$, the all-ones vector}\\[6pt]
$\vcenter{\hbox{\includegraphics[width=0.27\linewidth]{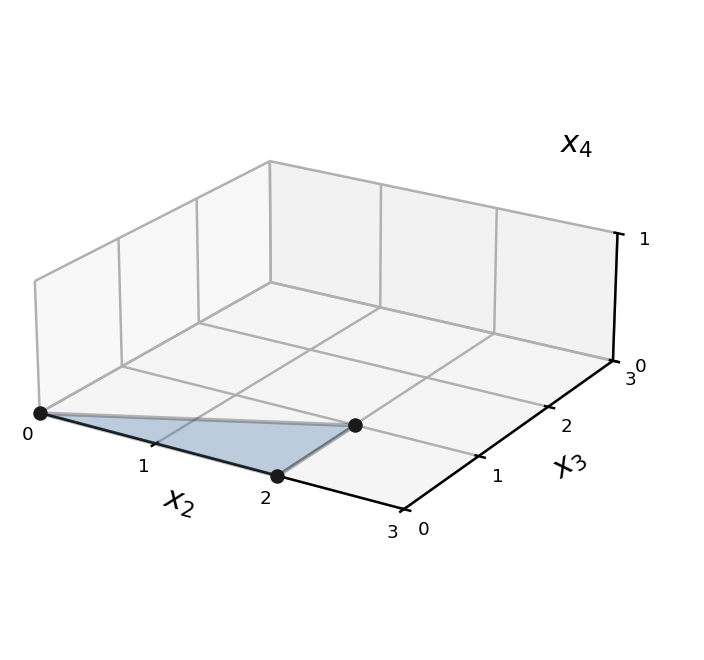}}}$ & $+$ &
$\vcenter{\hbox{\includegraphics[width=0.27\linewidth]{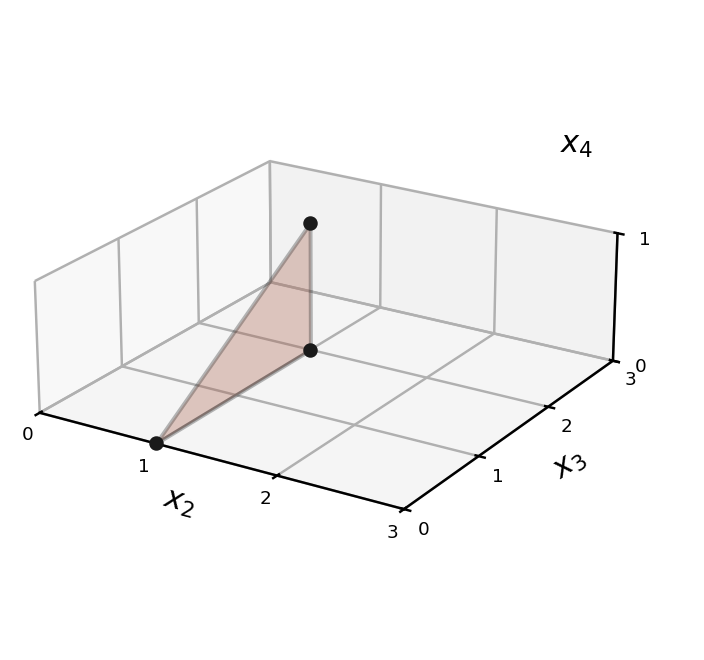}}}$ & $=$ &
$\vcenter{\hbox{\includegraphics[width=0.27\linewidth]{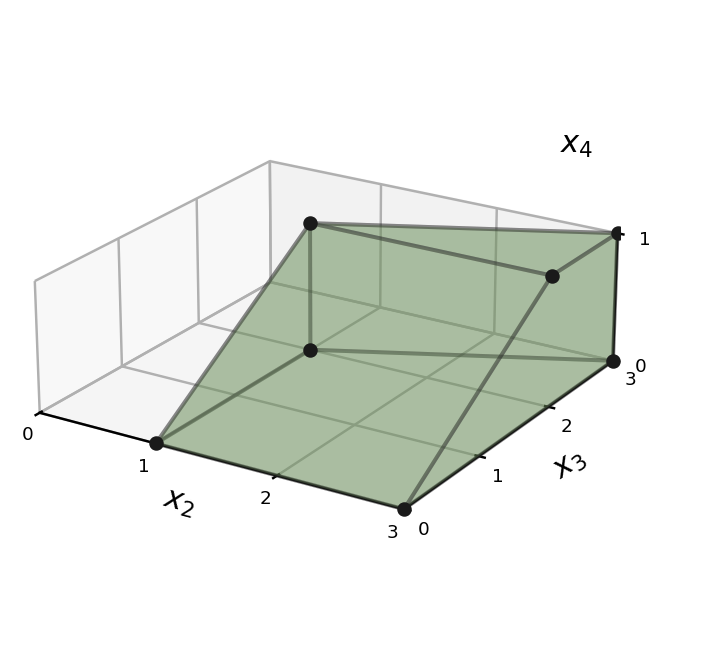}}}$\\[-8pt]
\multicolumn{5}{c}{\footnotesize $\vec a=(1,2,1)$, log-concave}
\end{tabular}
\caption{Summands and their Minkowski sum, $n=3$ and $\vec d=(0,1)$, in the coordinates
$(x_2,x_3,x_4)$ after dropping the constant $x_1$. Within each row the three panels share one
viewpoint and one scale.}
\label{fig:summands-and-sum}
\end{figure}

The drop in dimension is structural, and the general statement is easy.

\begin{proposition}
Let \(n\ge1\), \(r\ge1\) and \(0\le d_1\le\cdots\le d_r\). Then
\begin{equation}\label{eq:dimension}
  \dim\Koz(n;d_1,\dots,d_r)\;=\;\Bigl|\;\bigcup_{i=1}^{r}\lbrace d_i+2,\;d_i+3,\;\dots,\;d_i+n\rbrace\;\Bigr| .
\end{equation}
This equals \(r(n-1)\) exactly when the \(r\) index ranges are pairwise disjoint, i.e. when
\(d_{i+1}-d_i\ge n-1\) for all \(i\), and is strictly smaller otherwise.
\label{prop:dimension}
\label{prop-dimension}
\end{proposition}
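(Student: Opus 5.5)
The plan is to compute the dimension of a Minkowski sum through the linear spaces parallel to the affine hulls of its summands. For polytopes (indeed for any convex sets) \(P_1,\dots,P_r\), the affine hull of \(P_1+\cdots+P_r\) is \(\operatorname{aff}(P_1)+\cdots+\operatorname{aff}(P_r)\). Its parallel linear space is therefore \(L_1+\cdots+L_r\), where \(L_i\) is the space parallel to \(\operatorname{aff}(P_i)\). I would prove this in one line from Lemma \ref{lem-convex}(A): the sum is the hull of sums of vertices, and differences of such sums are sums of differences of vertices within each summand. So \(\dim\Koz(n;d_1,\dots,d_r)=\dim\bigl(\sum_iL_i\bigr)\), and everything reduces to identifying each \(L_i\).

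For the single simplex \(\Koz(n)\subseteq\mathbb R^{n+1}\), the vertices are \((1,\tilde F_j)\) for \(j=1,\dots,n\), by Theorem \ref{thm-kozlov} and Definition \ref{def-kozsimplex}. The consecutive differences are
\[
  (1,\tilde F_{j+1})-(1,\tilde F_j)=\tbinom n{j+1}e_{j+1}\qquad(1\le j\le n-1).
\]
These span the parallel space, which is therefore \(\operatorname{span}\{e_2,\dots,e_n\}\), of dimension \(n-1\) as Lemma \ref{lem-mu} already says. The map \(\shift^{d_i}\) is the linear coordinate inclusion \(e_j\mapsto e_{j+d_i}\) (Definition \ref{def-shift}), so it carries this space to \(L_i=\operatorname{span}\{e_{d_i+2},\dots,e_{d_i+n}\}\). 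A sum of coordinate subspaces is the coordinate subspace on the union of their index sets. Its dimension is the cardinality of that union, which is \eqref{eq:dimension}.

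For the second sentence, each index set \(\{d_i+2,\dots,d_i+n\}\) is an interval of \(n-1\) integers, so the union has size \(r(n-1)\) exactly when the intervals are pairwise disjoint, and is strictly smaller otherwise. Because \(d_1\le\cdots\le d_r\), pairwise disjointness of these equal-length intervals is equivalent to disjointness of consecutive ones. Consecutive intervals are disjoint exactly when \(d_{i+1}+2>d_i+n\), that is, when \(d_{i+1}-d_i\ge n-1\).

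I expect no real obstacle. The only points needing care are the Minkowski-sum affine-hull identity, which I would state and prove explicitly, and the degenerate case \(n=1\). At \(n=1\) all the intervals are empty, both sides are \(0\), and the gap condition \(d_{i+1}-d_i\ge0\) holds automatically, so the statement remains consistent there.
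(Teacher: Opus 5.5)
Your proposal is correct and follows the paper's proof. Both identify the direction space of \(\shift^{d_i}(\Koz(n))\) as \(\operatorname{span}\lbrace e_{d_i+2},\dots,e_{d_i+n}\rbrace\), and both use the fact that the direction space of a Minkowski sum is the sum of the summands' direction spaces. You also give an explicit argument for the disjointness criterion \(d_{i+1}-d_i\ge n-1\) and for the case \(n=1\), which the paper's proof does not spell out.
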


\begin{proof}
Every element of \(S_n\) has its first two coordinates constant, \((1,n)\), so the direction space
--- the linear span of the differences of its points --- of
\(\Koz(n)\) is \(\operatorname{span}\lbrace e_2,\dots,e_n\rbrace\), of dimension \(n-1\), and that of
\(\shift^{d}(\Koz(n))\) is \(\operatorname{span}\lbrace e_{d+2},\dots,e_{d+n}\rbrace\). The direction
space of a Minkowski sum is the sum of the summands' direction spaces.
\end{proof}

For \(n=3\), \(d=(0,1)\) this gives \(|\lbrace2,3\rbrace\cup\lbrace3,4\rbrace|=3\), as computed. So a Minkowski sum of shifted Kozlov simplices is rarely
full-dimensional in the space \(\mathbb R^N\) it is presented in.
\section{The vertex and facet structure of the Kozlov polytope at rank two}
\label{sec-vertices}
Theorem \ref{thm-main} describes the polytope as a Minkowski sum. It does not say which points are its
vertices, and that is a genuinely separate question: a vertex of a Minkowski sum is always a sum of
vertices of the summands, but most such sums are not vertices. This section records what is known.
\textbf{\textbf{Where everything in this section lives.}} The object under study is the right-angle polytope of
Definition \ref{def-rasimp},
\begin{equation}\label{eq:dimension-2}
  \RA(\vec a;d_1,\dots,d_r)\;\subseteq\;\mathbb R^N,
  \qquad N\;=\;n+d_r ,
\end{equation}
\(d_r\) being the largest shift, since \(\shift^{d_i}\) prepends \(d_i\) zeros to a vector of length
\(n\), so the \(i\)-th summand lies in \(\mathbb R^{n+d_i}\) and the sum in the largest of these.
At rank two, with \(\vec d=(0,d)\), this is \(N=n+d\).

These are the coordinates \textbf{without} the degree-\(0\) entry: every element of \(S_n\) has
its first coordinate equal to \(1\), and the projection that forgets that constant entry turns each
summand into a right-angle simplex. Theorem \ref{thm-main} is stated in the degree-\(0\)-inclusive
coordinates of Definition \ref{def-shift}; carrying it across that projection costs one translation,
by \(\sum_{i:\,d_i\ge1}e_{d_i}\), the sum running over the indices with \(d_i\ge1\) and not
simply over \(i\ge2\), a distinction that matters exactly when two or more generators tie at
degree \(0\).

Every sum of one vertex from each summand is a candidate vertex of \(\RA(\vec a;d_1,\dots,d_r)\);
which of them actually are vertices is recorded by the following tensor, and is the subject of the
section.

Throughout this section \textbf{vertex} means extreme point: a point of a polytope \(P\) that is not an
interior point of any segment with both endpoints in \(P\), equivalently a zero-dimensional face of
\(P\). Lemma \ref{lem-vertexdecomp}(1), in Appendix \ref{sec-app-criterion}, converts this into the
working criterion the proofs actually use, namely
unique maximization of a linear functional.

\begin{definition}
Let \(n\ge1\), \(r\ge1\), let \(\vec a\) be a positive leg vector of length \(n\) and let
\(0\le d_1\le\cdots\le d_r\). The \textbf{extremal tensor} \(\extten(\vec a;\vec d)\) of
\(\RA(\vec a;\vec d)\) is the \(r\)-dimensional \(0/1\) tensor, with each index running over
\(1,\dots,n\), defined by
\begin{equation}\label{eq:extten}
\begin{split}
  \extten(\vec a;\vec d)[j_1,\dots,j_r]=1 \iff{}
    &\shift^{d_1}(v_{j_1})+\cdots+\shift^{d_r}(v_{j_r})\\
    &\quad\text{is a vertex of }\RA(\vec a;d_1,\dots,d_r).
\end{split}
\end{equation}
For \(r=2\) and \(\vec d=(0,d)\) we call it the \textbf{extremal matrix} and write \(\extten(\vec a;d)\),
with \(i\) indexing rows (the unshifted summand) and \(j\) columns (the shifted one).
Both arguments are carried in the notation because both matter: the shift vector fixes which
coordinates the summands share, and the leg vector decides the answer inside that overlap ---
Section \ref{sec-vertices} is in large part the study of that dependence. Since \(\vec a\) has length
\(n\), writing it also records \(n\).
\label{def-extten}
\end{definition}

\textbf{Indices run from \(1\), on every axis and in every statement of this section}, matching the
labelling \(v_1,\dots,v_n\) of Definition \ref{def-rasimp}.

The indexing deserves a word, because it is a feature of this family rather than of Minkowski sums
in general. For an arbitrary sum \(P_1+\dots+P_r\) of polytopes there is no canonical way to label
the vertices of the summands, so the analogous \(0/1\) tensor is defined only up to independent
permutation of each axis, and to pin it down one has to supply an ordering of each \(P_i\)'s
vertices. Here no such choice is needed: \(\rasimp(\vec a)\) comes with a natural total order on its
vertices, \(v_1,\dots,v_n\) by increasing support, and every summand is a shifted copy of the same
simplex, so the same order serves for all of them. That is what makes \(\extten(\vec a;d)\) a well-defined tensor
with meaningful indices, and it is why statements such as "the entry at \(j=t+1\) vanishes" have
content.

The leg vector that matters is the Kozlov vector \(\kozvec(n)\) of Definition \ref{def-kozvec}, for
which \(\proj\bigl(\Koz(n;d_1,\dots,d_r)\bigr)\) is \(\RA(\vec a;d_1,\dots,d_r)\) up to the
translation of \eqref{eq:koz-as-ra}; it is the
case the rest of the paper is about. We also study the all-ones leg vector \(\onesvec(n)\), which has
a theorem of its own; and a good deal of what
follows holds for a \textbf{general} positive leg vector: of the six propositions of
Appendix \ref{sec-app-cases} only two ask anything of \(\vec a\) beyond positivity, and what they ask
is that the ratios \(a_{d+s}/a_s\) decrease.

\textbf{The proofs are in Appendix \ref{sec-app}.} Every statement in this section is a statement about the
extremal tensor, and every proof of one is a case analysis on the index pair, carried out by
exhibiting a single linear functional or by deriving a contradiction from one. The analysis is long
and its bookkeeping is of a quite different character from the statements it establishes, so it is
collected in Appendix \ref{sec-app} --- together with the auxiliary lemmas it needs and the notation
those lemmas use, none of which is needed to read what follows. What is left here is the
definitions, the statements, and the examples and comparisons that can be read without the case
analysis.
\subsection{The three regimes}
\label{sec:org55826e6}

Fix \(r=2\) and \(\vec d=(0,d)\), so that \(\RA(\vec a;0,d)\subseteq\mathbb R^N\) with
\(N=n+d\), and let \(m=n-d\) be the number of coordinates the two summands share, so that
\(m\le0\) means they share none. Every statement in this subsection is a rank-two
one, and \(\vec a\) is an arbitrary positive leg vector of length \(n\) unless said otherwise.

The three regimes are separated because they need genuinely different hypotheses, and because only
the third depends on \(\vec a\) at all. We take them one at a time.

\begin{proposition}
(\emph{Tied shifts.}) Let \(n\ge1\) and \(\vec a\) be a positive leg vector. Then
\(\RA(\vec a;0,0)=2\rasimp(\vec a)\), a dilation of the right-angle simplex, and
\(\extten(\vec a;0)\) is the \(n\times n\) identity matrix.
\label{prop:regimes-tie}
\label{prop-regimes-tie}
\end{proposition}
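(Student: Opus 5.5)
The plan has two parts: first the polytope identity, then the vertex criterion. With \(\vec d=(0,0)\) both summands are the same set \(\rasimp(\vec a)\), since \(\shift^0\) is the identity. For any convex set \(K\) we have \(K+K=2K\). The inclusion \(2K\subseteq K+K\) is immediate, because \(2x=x+x\). The reverse inclusion holds because \(x+y=2\cdot\tfrac12(x+y)\) and \(\tfrac12(x+y)\in K\) by convexity. Hence \(\RA(\vec a;0,0)=2\rasimp(\vec a)\).

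For the extremal matrix, a candidate point is \(v_i+v_j\). I would settle the two cases separately.

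When \(i=j\), the point \(v_i+v_i=2v_i\) is the image of a vertex under the dilation \(x\mapsto2x\). A dilation is an affine bijection, so it maps vertices to vertices. Here I use that \(v_1,\dots,v_n\) are the vertices of \(\rasimp(\vec a)\). This holds because they are affinely independent: consecutive \(v_j\) differ by \(a_{j}e_{j}\) with \(a_j>0\), which is exactly the argument of Lemma \ref{lem-mu} transported by the rescaling \(x_j\mapsto x_j/a_j\). So \(2v_i\) is a vertex of \(2\rasimp(\vec a)\), and \(\extten(\vec a;0)[i,i]=1\).

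When \(i\ne j\), the point \(v_i+v_j=2\cdot\tfrac12(v_i+v_j)\) is the midpoint of the segment from \(2v_i\) to \(2v_j\). Both endpoints lie in the polytope, and they are distinct because the \(v\)'s are distinct. So \(v_i+v_j\) is an interior point of a segment in \(P\), hence not an extreme point, and \(\extten(\vec a;0)[i,j]=0\).

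The only step needing care is that the \(v_j\) are genuinely distinct vertices, so that the off-diagonal segment is nondegenerate. This uses positivity of the leg vector, since the \(j\)-th coordinate separates \(v_j\) from \(v_{j-1}\). For \(n=1\) the matrix is \(1\times1\) and the statement is trivial.
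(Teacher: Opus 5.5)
Your proof is correct, but for the matrix it takes a different route from the paper's. The paper does not use the identity \(\RA(\vec a;0,0)=2\rasimp(\vec a)\) to compute the matrix. It applies the functional criterion of Lemma~\ref{lem-vertexdecomp}(1): a single \(\varphi\) cannot have two distinct unique maximizers \(v_i\ne v_j\) on the same simplex. Conversely, a functional exposing \(v_i\) on \(\rasimp(\vec a)\) exposes it on both summands at once.

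You instead work inside the dilated simplex, directly from the definition of an extreme point. The diagonal entries are \(1\) because an affine bijection carries vertices to vertices. The off-diagonal entries are \(0\) because \(v_i+v_j\) is the midpoint of \(2v_i\) and \(2v_j\). That segment is exactly the one constructed in the proof of Lemma~\ref{lem-vertexdecomp}(2), applied to the swapped decomposition \(v_j+v_i\) (there \(y=2v_j\) and \(z=2v_i\)). So in effect you have inlined the uniqueness-of-decomposition argument in place of the exposure criterion.

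Your version is self-contained. It needs no separation argument and no Minkowski-sum lemma. It also checks explicitly, via affine independence and positivity of \(\vec a\), that the \(v_i\) really are the vertices of \(\rasimp(\vec a)\). The paper takes that for granted when it speaks of a functional exposing \(v_i\).

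The paper's version buys uniformity with the rest of the section. There every entry of every extremal matrix is decided by exhibiting or excluding one functional, and the tied case is the instance where that criterion collapses to a one-line observation.
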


\begin{proposition}
(\emph{Disjoint windows.}) Let \(n\ge1\), \(d\ge n\) and \(\vec a\) be a positive leg vector. Then the
two summands are supported on disjoint sets of coordinates,
\(\RA(\vec a;0,d)=\rasimp(\vec a)\times\shift^d(\rasimp(\vec a))\) is their Cartesian (prism)
product, and \(\extten(\vec a;d)\) is the all-ones matrix: all \(n^2\) vertex sums are vertices.
\label{prop:regimes-disjoint}
\label{prop-regimes-disjoint}
\end{proposition}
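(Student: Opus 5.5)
The plan is to show first that the two summands live in complementary coordinate blocks, and then to use the general fact that a Minkowski sum of two sets in complementary coordinate subspaces is their Cartesian product. Since \(d\ge n\), the unshifted simplex \(\rasimp(\vec a)\subseteq\mathbb R^N\) is supported on coordinates \(1,\dots,n\). The shifted copy \(\shift^d(\rasimp(\vec a))\) is supported on coordinates \(d+1,\dots,d+n\), and these two index sets are disjoint exactly because \(d+1>n\). So \(\mathbb R^N=\mathbb R^{A}\oplus\mathbb R^{B}\oplus\mathbb R^{C}\), with \(A=[n]\), \(B=\lbrace d+1,\dots,d+n\rbrace\), and \(C\) the gap coordinates \(n+1,\dots,d\), which are empty when \(d=n\). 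The first summand lies in \(\mathbb R^A\), the second in \(\mathbb R^B\), and both vanish on \(C\).

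For the product statement, the map \((x,y)\mapsto x+y\) from \(\rasimp(\vec a)\times\shift^d(\rasimp(\vec a))\) to the sum is surjective by the definition of Minkowski sum. It is injective because \(x\) is recovered as the \(A\)-block of \(x+y\) and \(y\) as the \(B\)-block. The map is also linear, so it identifies \(\RA(\vec a;0,d)\) affinely with the Cartesian product, which is what the statement means by ``prism product''.

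For the extremal matrix, I will use the standard fact that the vertices of a product \(P\times Q\) are exactly the pairs \((p,q)\) with \(p\) a vertex of \(P\) and \(q\) a vertex of \(Q\), and that this correspondence is transported by the linear isomorphism above. The proof uses the unique-maximizer criterion of Lemma \ref{lem-vertexdecomp}(1) directly. Given indices \(i,j\), pick a functional \(\varphi\) on \(\mathbb R^A\) uniquely maximized on \(\rasimp(\vec a)\) at \(v_i\), and \(\psi\) on \(\mathbb R^B\) uniquely maximized on the shifted copy at \(\shift^d(v_j)\); such functionals exist because \(v_i\) and \(\shift^d(v_j)\) are vertices of simplices. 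Their sum \(\varphi\oplus\psi\) (zero on \(C\)) satisfies \(h(\varphi\oplus\psi,\text{sum})=h(\varphi,P)+h(\psi,Q)\). Since the blocks separate, a maximizer \(x+y\) of \(\varphi\oplus\psi\) must have \(x\) maximizing \(\varphi\) and \(y\) maximizing \(\psi\), so the maximizer is unique and equal to \(v_i+\shift^d(v_j)\). Hence every entry of \(\extten(\vec a;d)\) is \(1\).

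Positivity of \(\vec a\) is used only to guarantee that the \(v_j\) are \(n\) distinct, affinely independent points, so that each is genuinely a vertex and distinct index pairs give distinct sums. There is no real obstacle here. The only point needing care is the boundary case \(d=n\): there the windows \([1,n]\) and \([n+1,2n]\) are adjacent but still disjoint, so the argument is unchanged, in agreement with the off-by-one discussed in Remark \ref{rem-hierarchy-gaps}.
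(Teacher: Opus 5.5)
Your proposal is correct and takes the same approach as the paper. The windows $[1,n]$ and $[d+1,d+n]$ are disjoint, so a functional exposing $v_i$ and one exposing $\shift^d(v_j)$ can be glued without conflict, and Lemma~\ref{lem-vertexdecomp}(1) then makes every entry of $\extten(\vec a;d)$ equal to $1$. Your one addition is the explicit check that $(x,y)\mapsto x+y$ is a linear bijection onto the sum, with $x$ and $y$ recovered as the $A$- and $B$-blocks; this spells out the Cartesian-product identification that the paper leaves implicit.
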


\begin{proposition}
(\emph{One shared coordinate.}) Let \(n\ge2\), \(d=n-1\) and \(\vec a\) be a positive leg vector. Then
\(\extten(\vec a;d)\) is the all-ones matrix, \(\RA(\vec a;0,n-1)\) has \(n^2\) vertices, and it is
the image of \(\rasimp(\vec a)\times\rasimp(\vec a)\) under an affine isomorphism.
\label{prop:regimes-boundary}
\label{prop-regimes-boundary}
\end{proposition}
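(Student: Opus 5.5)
The approach is to write down the affine isomorphism explicitly and then transport the vertex structure along it. With \(d=n-1\) the ambient space is \(\mathbb R^N\), \(N=2n-1\). The unshifted summand \(\rasimp(\vec a)\) lives in coordinates \(1,\dots,n\). The shifted summand \(\shift^{n-1}(\rasimp(\vec a))\) lives in coordinates \(n,\dots,2n-1\). The only shared coordinate is \(x_n\).

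A point of the sum has the form \(x=y+\shift^{n-1}z\) with \(y,z\in\rasimp(\vec a)\). Every point of \(\rasimp(\vec a)\) has first coordinate \(a_1\), since all the \(v_j\) do. Hence the shifted summand contributes the constant \(a_1\) to \(x_n\), and the whole of its free information sits in coordinates \(n+1,\dots,2n-1\). The unshifted summand contributes \(y_n\) to \(x_n\) and \(y_1,\dots,y_{n-1}\) to the first coordinates.

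So I would define the linear map
\[
  \Phi:\mathbb R^n\times\mathbb R^n\to\mathbb R^{2n-1},\qquad
  (y,z)\mapsto y+\shift^{n-1}z .
\]
Restricted to the affine subspace \(\{y_1=a_1\}\times\{z_1=a_1\}\), which contains \(\rasimp(\vec a)\times\rasimp(\vec a)\), this map is injective. Indeed, \(y\) is recovered from \(x_1,\dots,x_{n-1}\) together with \(y_n=x_n-a_1\), and \(z\) is recovered from \(z_1=a_1\) and \(z_k=x_{n-1+k}\) for \(k\ge2\).

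Since \(\Phi\) is linear, \(\Phi(P\times Q)=P+\shift^{n-1}Q\). This is the Minkowski sum, by the same computation as Lemma \ref{lem-convex}. An injective affine map from the product's affine hull onto its image is an affine isomorphism onto \(\RA(\vec a;0,n-1)\). This gives the last claim.

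The vertex claims follow by transport. The vertices of a product of polytopes are exactly the pairs of vertices. Affine isomorphisms carry extreme points to extreme points. So the vertices of \(\RA(\vec a;0,n-1)\) are exactly the images \(\Phi(v_i,v_j)=v_i+\shift^{n-1}(v_j)\). Hence every entry of \(\extten(\vec a;n-1)\) equals \(1\).

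For the count, these \(n^2\) points are pairwise distinct because \(\Phi\) is injective on the product and the \(v_i\) are distinct, so there are \(n^2\) vertices. The hypothesis \(n\ge2\) is needed only so that the shared coordinate exists inside both windows. At \(n=1\) we would have \(d=0\), which is the tie case, Proposition \ref{prop-regimes-tie}.

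The only delicate step is the injectivity. It rests on the observation that the shared coordinate \(x_n\) is the first coordinate of the shifted copy, and that coordinate is constant on \(\rasimp(\vec a)\). Positivity of \(\vec a\) enters only in guaranteeing that the \(v_j\) are distinct. I would state that constancy as a one-line lemma before constructing \(\Phi\).
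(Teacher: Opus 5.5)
Your argument is correct, but it runs in the opposite direction from the paper's.

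\textbf{The paper's route.} It proves the all-ones matrix first, using the functional criterion of Lemma~\ref{lem-vertexdecomp}(1). Since \(\shift^{n-1}(\rasimp(\vec a))\subseteq\{x_n=a_1\}\), the coefficient \(\varphi_n\) cannot affect which vertex of the shifted summand wins. So \(\varphi_n\) may be chosen by the first summand alone, and the disjoint-support construction of Proposition~\ref{prop-regimes-disjoint} applies verbatim. The product structure is added only at the end, in one sentence: subtract \(a_1e_n\) from the second summand.

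\textbf{Your route.} You start from the same key observation, that the shifted summand is constant on the one shared coordinate. You turn it into the explicit linear map \(\Phi\), which is injective on \(\mathbb R^n\times\{z_1=a_1\}\), and read everything off it. The vertex set, the all-ones matrix and the count all follow at once. Distinctness of the \(n^2\) vertices then comes from injectivity of \(\Phi\), rather than from the uniqueness of decompositions in Lemma~\ref{lem-vertexdecomp}(2).

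\textbf{What each buys.} Yours is more structural, and it makes the affine isomorphism precise where the paper only sketches it. The price is two imported facts from convex geometry: the extreme points of a product are the pairs of extreme points, and injective affine maps preserve extreme points. The paper deliberately avoids these. Its whole section rests on the finite system of strict inequalities \eqref{eq:no-support-functions}, which keeps this regime on the same footing as the tied and disjoint cases and the later case analysis.

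\textbf{One step to make explicit.} Transport yields the vertices \(\Phi(v_i,v_j)\) only once you know that every \(v_j\) is a vertex of \(\rasimp(\vec a)\). Distinct generators need not all be extreme in general. Here it follows from the triangular form of the differences \(v_j-v_1=(0,a_2,\dots,a_j,0,\dots,0)\), i.e.\ from affine independence, and that is what positivity really supplies. For this staircase, distinctness and affine independence happen to be equivalent, so nothing breaks, but the step should be said.

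\textbf{A side remark to correct.} Your comment on \(n\ge2\) is inaccurate: your argument goes through unchanged at \(n=1\). The hypothesis only keeps \(d=n-1\ge1\), away from the tied case.
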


All three are proved in Appendix \ref{sec-app-regimes}. Each argument exhibits one functional; only
the third needs a word about which of the two summands is constant on the shared coordinate, and it
is the shifted one.

\begin{remark}
The remaining regime is \(1\le d\le n-2\), equivalently \(m\ge2\), and there the answer \emph{can} depend
on \(\vec a\): for \(m\ge3\), Theorem \ref{thm-ones} and Theorem \ref{thm-kozlov-matrix} give different
matrices. At \(m=2\) they still agree --- see the paragraph after Figure \ref{fig:summands-and-sum},
which locates the onset at \(m=3\) exactly.
This is an observation about the pair of theorems rather than a theorem itself, which is why it is
recorded here and not as a third part of a proposition.
\label{rem-regimes-dependence}
\end{remark}

Figure \ref{fig:summands-and-sum} is worth pausing on, because it shows less than one might hope. Rank
two with \(n=3\) and \(d=1\) is the \emph{only} case in which \(\RA(\vec a;0,d)\) is three-dimensional:
the dimension is
\(\lvert\lbrace2,\dots,n\rbrace\cup\lbrace d+2,\dots,d+n\rbrace\rvert\), and for it to equal \(3\)
one needs \(d+n=4\). That forces \(m=2\), and at \(m\le2\) the two leg vectors give the same face
lattice. So no picture in three dimensions can display the dependence on \(\vec a\); the two bodies in
its right-hand column differ metrically and not combinatorially. The dependence begins at \(m=3\)
(below), which needs \(\dim\RA(\vec a;0,d)\ge4\), so this is why the evidence below is numerical
rather than pictorial.

At \(d=n-2\) the \emph{argument} of Proposition \ref{prop-regimes-boundary} breaks: two coordinates are
shared rather than one, the second being \(a_2\) or \(0\) depending on which vertex of the summand
\(\rasimp(\vec a)\) is taken, so the shared
block starts discriminating on both sides at once. That is a statement about the proof and not about
the answer. At \(d=n-2\) one has \(m=2\), and the paragraph above records that the two leg vectors
still give the same face lattice there: \textbf{dependence on \(\vec a\) begins one step later, at
\(m=3\), that is at \(d=n-3\)}, where \(\binom{m-1}{2}\) first becomes positive. Once it begins
it is real, and not a subtler invariant. For \(n=6\) and \(d=1\) the two polytopes are both \(6\)-dimensional but have
different numbers of faces in \emph{every} dimension: \(16\) vertices, \(50\) edges, then \(75\), \(65\),
\(33\), and \(9\) facets for \(\vec a=\onesvec(n)\), against \(22\), \(76\), \(120\), \(104\), \(50\),
and \(12\) for the Kozlov vector. So the two are not the same polytope differently coordinatised;
they differ as combinatorial objects.

The numbers just listed are those of the \emph{polytope \(\RA(\vec a;0,d)\) itself}, its polytopal
face-vector in the sense of Section \ref{sec-notation}, and not \(f\)-vectors of complexes: an
unrelated invariant that happens to share the classical name.

The last entries are worth keeping in view: \(9\) facets against \(12=2n\). The Kozlov polytope
attains \(2n\) and the all-ones one falls short, which is exactly the redundancy phenomenon
Conjecture \ref{conj-kozlov-hrep2} and Conjecture \ref{conj-ones-hrep} describe in Section \ref{sec-hrep2}.

The smallest case in which the two differ at all is \(n=4\), \(d=1\), where \(m=3\) and the
discrepancy \(\binom{m-1}{2}\) first equals \(1\). There \(\RA(\vec a;0,d)\) is four-dimensional,
so it is seen
through a Schlegel diagram (Figure \ref{fig:schlegel}): projecting from a point just outside one
facet onto that facet's hyperplane, which draws the boundary complex faithfully in three dimensions.

\begin{figure}[htbp]
\centering
\begin{minipage}[t]{0.47\linewidth}\centering
  \includegraphics[width=\linewidth]{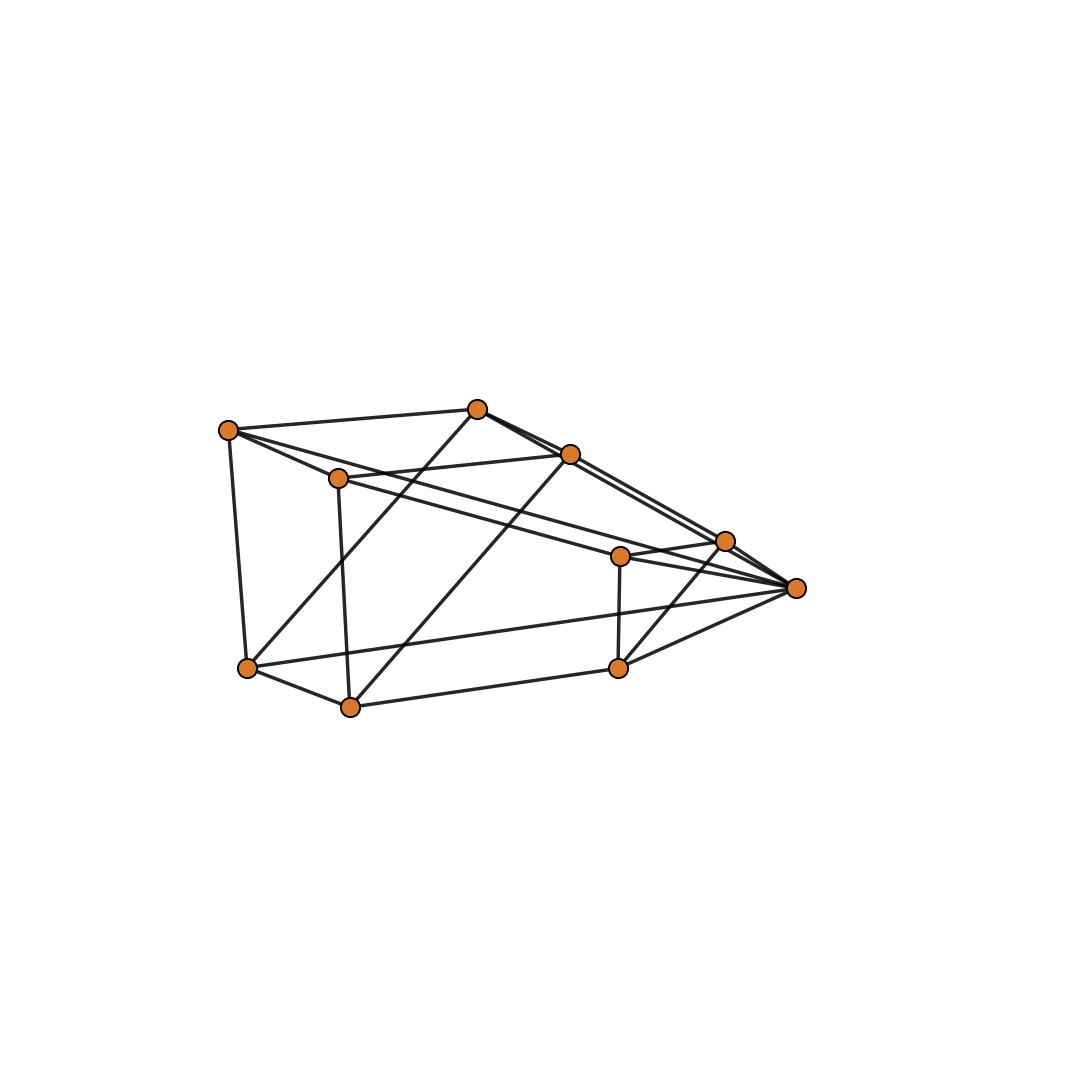}\\[-4pt]
  {\footnotesize $a=(1,1,1,1)$: $10$ vertices, $21$ edges}
\end{minipage}\hfill
\begin{minipage}[t]{0.47\linewidth}\centering
  \includegraphics[width=\linewidth]{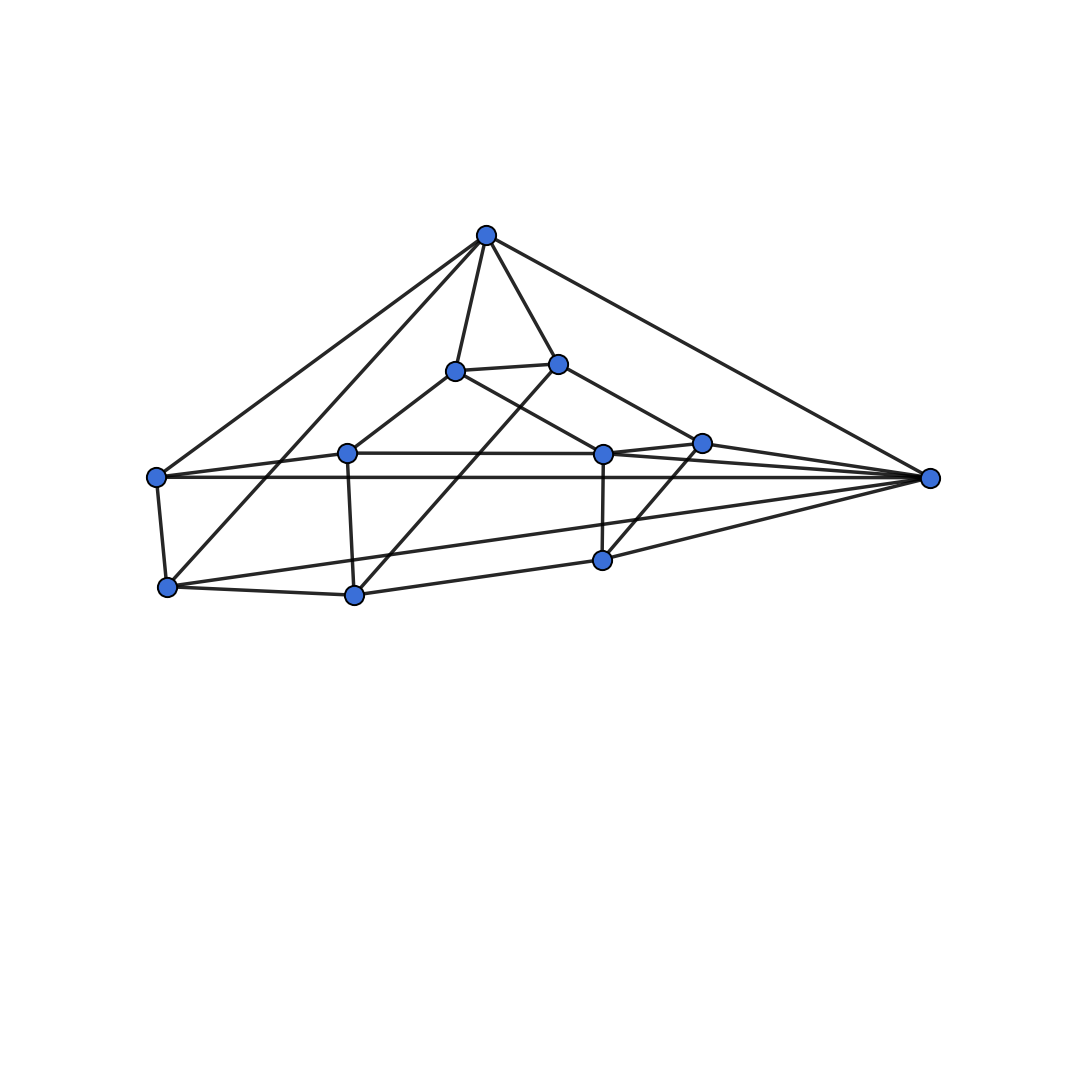}\\[-4pt]
  {\footnotesize $a=(1,3,3,1)$: $11$ vertices, $24$ edges}
\end{minipage}
\caption{Schlegel diagrams of $\RA(\vec a;0,1)=\rasimp(\vec a)+\shift^1(\rasimp(\vec a))$ for $n=4$, the smallest case where
the leg vector changes the combinatorics. The polytopal face-vectors are $(1,10,21,18,7,1)$ and
$(1,11,24,21,8,1)$, differing by $(1,3,3,1)$ in every dimension: one extra vertex, carrying three
extra edges, three extra $2$-faces and one extra facet. Cross-sections locate the change at the two
extreme coordinates --- a triangular prism acquiring a split vertex --- while the middle coordinates
give identical sections for both.}
\label{fig:schlegel}
\end{figure}

\clearpage
\subsection{The all-ones case at rank two}
\label{sec:org6162d1e}

\begin{theorem}
\regionbox{\regdiag \regones{(0,0.7) rectangle (1,1)} \regones{(0.7,0) rectangle (1,1)} \regones{(0,0.7)--(0.09,0.7)--(0.7,0.09)--(0.7,0)--(0.61,0)--(0,0.61)--cycle} \node[font=\tiny] at (0.35,0.35) {$I_m$};}Let \(n\ge2\), \(\vec a=\onesvec(n)\), \(r=2\) and \(\vec d=(0,d)\) with
\(1\le d\le n-1\); put \(m=n-d\), and for indices \(1\le i,j\le n\) write \(t=i-d\). Then
\begin{equation}\label{eq:ones}
  \extten(\vec a;d)[i][j]=0 \iff 1\le t\le m,\quad 1\le j\le m,\quad t\ne j .
\end{equation}
Inside the \(m\times m\) overlap block only the diagonal survives, so that block is the identity
matrix \(I_m\); outside it every entry is one.
Consequently \(\RA(\vec a;0,d)\) has exactly \(n^2-m(m-1)\) vertices.
\label{thm:ones}
\label{thm-ones}
\end{theorem}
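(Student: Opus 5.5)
The plan is to reduce everything to the standard description of faces of a Minkowski sum. For a linear functional \(\varphi\), the face of \(\RA(\vec a;0,d)\) maximizing \(\varphi\) is the sum of the faces of the two summands maximizing \(\varphi\). Hence \(v_i+u_j\) (with \(u_j=\shift^d(v_j)\)) is a vertex if and only if some single \(\varphi\) is uniquely maximized on \(\rasimp(\vec a)\) at \(v_i\) and on \(\shift^d\rasimp(\vec a)\) at \(u_j\). This is the working criterion the paper attributes to Lemma \ref{lem-vertexdecomp}(1). The same lemma gives that a vertex has a unique decomposition as a sum of summand vertices, so distinct surviving pairs \((i,j)\) give distinct vertices. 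The vertex count \(n^2-m(m-1)\) therefore follows from the zero pattern by counting the off-diagonal cells of the \(m\times m\) block.

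For \(\vec a=\onesvec(n)\) the criterion becomes a statement about partial sums. Write \(\Phi(k)=\varphi_1+\cdots+\varphi_k\) for \(0\le k\le N\), with \(\Phi(0)=0\). Then \(\langle\varphi,v_i\rangle=\Phi(i)\) and \(\langle\varphi,u_j\rangle=\Phi(d+j)-\Phi(d)\). So the condition is that \(i\) is the unique argmax of \(\Phi\) on the window \(W_1=\{1,\dots,n\}\) and \(d+j\) is the unique argmax of \(\Phi\) on \(W_2=\{d+1,\dots,d+n\}\). Since \(\Phi\) may be any real sequence with \(\Phi(0)=0\) (the \(\varphi_c\) are free), the question is purely combinatorial: can one prescribe strict maxima of a sequence at position \(i\) on \(W_1\) and at position \(d+j\) on \(W_2\)? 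The two windows overlap exactly in \(W_1\cap W_2=\{d+1,\dots,n\}\), which is the positions \(d+t\) with \(1\le t\le m\).

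Now the cases split as follows.

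\textbf{Zeros.} If \(1\le t\le m\) and \(1\le j\le m\), then both \(i\) and \(d+j\) lie in the overlap, which is contained in both windows. Unique maximality of \(i\) on \(W_1\) gives \(\Phi(i)>\Phi(d+j)\) unless \(i=d+j\), and unique maximality of \(d+j\) on \(W_2\) gives the reverse inequality. Hence \(i=d+j\), that is \(t=j\), and every off-diagonal entry of the block is zero.

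\textbf{Ones.} Otherwise one of the two chosen positions lies outside the other window. In that case I would set \(\Phi\) to a large value \(A\) at the position outside the overlap, a smaller value \(B\) at the other chosen position, and \(0\) elsewhere, with \(A>B>0\). Each prescribed position is then the strict maximum of its own window, because the larger peak is not in that window.
\begin{itemize}
\item If \(t\le 0\), so \(i\le d\), then \(i\notin W_2\) and \(i\) takes the value \(A\).
\item If \(j>m\), so \(d+j>n\), then \(d+j\notin W_1\) and \(d+j\) takes the value \(A\).
\end{itemize}
On the diagonal \(t=j\) the two positions coincide, and a single peak at \(i\) works for both windows. The side condition \(\Phi(0)=0\) is harmless, since position \(0\) lies in neither window.

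The only step needing care is the reduction to the criterion: the Minkowski-sum face identity and the uniqueness of the decomposition of a vertex. Both are standard and are supplied by Lemma \ref{lem-vertexdecomp}. After that, the case analysis on \((t,j)\) is short.
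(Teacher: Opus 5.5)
Your proof is correct and follows the paper's route. You use the same reduction via Lemma~\ref{lem-vertexdecomp} to prescribing strict maxima of the partial-sum sequence \(\Phi\) on the windows \(W_1=[1,n]\) and \(W_2=[d+1,d+n]\), followed by the same case split. Your two-peak construction with \(A>B>0\) is the direct three-case argument the paper gives in Remark~\ref{rem-digraph-collapses}, which replaces the digraph-acyclicity phrasing of its main proof; the paper keeps that phrasing only because it survives to higher rank.
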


In this schematic and the ones like it below, the frame is the whole \(n\times n\) extremal
matrix, rows downward and columns rightward; the dashed rules cut off the top \(d\) rows and the
right \(d\) columns, leaving the \(m\times m\) overlap block at the lower left, and the drawn
diagonal is \(j=t\). \textbf{\textbf{Blue marks entries the statement shows are \(1\), red entries it shows are
\(0\).}} The proportions are schematic, not to scale for any particular \(n\) and \(d\).

The hypothesis \(1\le d\le n-1\) is what makes the statement non-degenerate, and it forces
\(n\ge2\); the two excluded values of \(d\) are Proposition \ref{prop-regimes-tie} (\(d=0\), the
identity matrix) and Proposition \ref{prop-regimes-disjoint} (\(d\ge n\), the all-ones matrix), and at
\(d=n-1\) the theorem's zero set is empty, agreeing with Proposition \ref{prop-regimes-boundary}.

The proof is in Appendix \ref{sec-app-ones}. It turns the criterion into a question about a small
digraph on the coordinates, whose acyclicity decides the entry; the example that works that digraph
out by hand is there too, since it is about the mechanism rather than about the matrix.

The shape is easiest to see in an example. For \(n=5\) the matrices \(\extten(\vec 1;d)\), with \(i\)
indexing rows and \(j\) columns, are
\[
d=1:\;\begin{pmatrix}1&1&1&1&1\\1&0&0&0&1\\0&1&0&0&1\\0&0&1&0&1\\0&0&0&1&1\end{pmatrix} \qquad d=2:\;\begin{pmatrix}1&1&1&1&1\\1&1&1&1&1\\1&0&0&1&1\\0&1&0&1&1\\0&0&1&1&1\end{pmatrix}
\]
\[
d=3:\;\begin{pmatrix}1&1&1&1&1\\1&1&1&1&1\\1&1&1&1&1\\1&0&1&1&1\\0&1&1&1&1\end{pmatrix} \qquad d=4:\;\begin{pmatrix}1&1&1&1&1\\1&1&1&1&1\\1&1&1&1&1\\1&1&1&1&1\\1&1&1&1&1\end{pmatrix}
\]
The \(m\times m\) block of zeros-off-the-diagonal sits in the lower left and shrinks as \(d\) grows,
vanishing at \(d=n-1\) where \(\extten(\vec a;d)\) is all ones and \(\RA(\vec a;0,d)\) is the image of
a Cartesian product under an affine isomorphism (Proposition \ref{prop-regimes-boundary}; it is a
Cartesian product on the nose only from \(d\ge n\)).

The orientation of Definition \ref{def-extten}, with \(i\) indexing the unshifted summand, is forced
and not conventional: transposing it makes the statement false. See Section \ref{sec-methods}.

Contrast Theorem \ref{thm-kozlov-matrix}. Both theorems restrict their zeros to the same block
\(1\le t\le m\), \(1\le j\le m\), and inside it both put zeros exactly where \(j<t\); they differ
only above the diagonal, where the all-ones matrix is zero throughout \(j>t\) while the Kozlov
matrix is zero only at \(j=t+1\). So the Kozlov matrix is the one displayed above with the region
\(j>t+1\) filled back in, a difference of \(\binom{m-1}{2}\) entries. The two statements are
directly comparable in this way only because both use the index base fixed in
Definition \ref{def-extten}.

The corresponding facet description is not proved, but it has survived enough testing to be worth
recording as a conjecture.

\begin{conjecture}
Let \(n\ge2\), \(1\le d\le n-1\) and \(N=n+d\). For \(\vec a=\onesvec(n)\), the
polytope \(\RA(\vec a;0,d)\)
is cut out in \(\mathbb R^N\) by a single equation together with one capped, weakly decreasing chain
of inequalities:
\[
\begin{array}{ll}
 x_1=1 & \text{(equation)}\\[2pt]
 1=x_1\ge x_2\ge\cdots\ge x_d & \text{(first block)}\\
 x_{d+1}\le x_d+1,\quad x_{d+1}\ge1 & \text{(entry to the overlap: the \(+1\) bump)}\\
 x_{d+1}\ge x_{d+2}\ge\cdots\ge x_n & \text{(overlap block)}\\
 x_n\ge x_{n+1},\quad x_{n+1}\le1 & \text{(transition)}\\
 x_{n+1}\ge x_{n+2}\ge\cdots\ge x_N\ge0 & \text{(second block)}
\end{array}
\]
\label{conj:ones-hrep}
\label{conj-ones-hrep}
\end{conjecture}

The chain runs weakly downward from \(x_2\) to \(x_N\), floored at \(0\) and capped at \(1\) at both
ends, except that the first overlap coordinate is allowed one unit higher. The facet count is
\(n+d+2\) when \(m\ge2\), and \(n+d+1\) when \(m=1\).

The evidence is exact polyhedral equality against the Minkowski sum itself (Section
\ref{sec-methods}). What is missing is a facet-by-facet proof
from the vertex coordinates, which is why the statement is labelled a conjecture and not folded into
Theorem \ref{thm-ones}.

\begin{mexample}
Take \(n=5\), \(d=2\), so \(N=7\) and \(m=3\). The sum \(\RA(\vec a;0,d)\) is \(6\)-dimensional
with \(19\)
vertices, and it has \(9\) facets --- one fewer than the \(2n=10\) that the Kozlov vector attains,
which is the redundancy at issue. This example is not conjectural: the facets below were computed
by exact rational polyhedral arithmetic from the vertex list, so for these particular \(n\) and
\(d\) the list is complete and proved, whatever the status of the general description. Written out,
and with the equation \(x_1=5\) understood, they are
\[
\begin{array}{lll}
 x_2\le 1, & x_2-x_3\ge-1, & x_3\ge 1,\\
 x_3-x_4\ge 0, & x_4-x_5\ge 0, & x_5-x_6\ge 0,\\
 x_6-x_7\ge 0, & x_6\le 1, & x_7\ge 0 .
\end{array}
\]
The chain \(x_3\ge x_4\ge x_5\ge x_6\ge x_7\ge0\) runs unbroken through the overlap, which is
exactly what makes one of the two windows' chain inequalities redundant here.
\label{ex-ones-facets}
\end{mexample}
\subsection{The Kozlov-vector case at rank two}
\label{sec:orge0f18c1}

\textbf{Standing hypotheses and notation for this subsection.} Recall that the leg vector is
\(\vec a=(a_1,\dots,a_n)\), of length \(n\). Throughout, \(n\ge2\), \(r=2\),
\(\vec d=(0,d)\) with \(1\le d\le n-1\), and \(m=n-d\ge1\). The leg vector \(\vec a\) is positive of
length \(n\), and is the Kozlov vector \(\kozvec(n)\) except where stated; the propositions of
Appendix \ref{sec-app-cases} record in each case how much of that is actually used. The
ambient space is \(\mathbb R^N\) with \(N=n+d\), and its coordinates split into three blocks that
partition \([1,N]\):
\begin{equation}\label{eq:ones-facets}
  \underbrace{[1,d]}_{\text{first summand only}}\quad
  \underbrace{[d+1,n]}_{\text{shared, }m\text{ coordinates}}\quad
  \underbrace{[n+1,N]}_{\text{second summand only}} .
\end{equation}
We write \(v_i\) for the \(i\)-th vertex of the first (unshifted) summand and
\(u_j=\shift^d(v_j)\) for the \(j\)-th vertex of the second, and \(t=i-d\), so that
\(1\le t\le m\) says exactly that \(v_i\) reaches into the shared block without leaving it.

\begin{theorem}
\regionbox{\regdiag \regones{(0,0.7) rectangle (1,1)} \regones{(0.7,0) rectangle (1,1)} \regones{(0,0.7)--(0.09,0.7)--(0.7,0.09)--(0.7,0)--(0.61,0)--(0,0.61)--cycle} \regones{(0.21,0.7)--(0.7,0.7)--(0.7,0.21)--cycle} \regzeros{(0,0)--(0.61,0)--(0,0.61)--cycle} \regzeros{(0.09,0.7)--(0.21,0.7)--(0.7,0.21)--(0.7,0.09)--cycle}}Let \(n\ge2\), let \(\vec a=\kozvec(n)=\bigl(\binom n1,\dots,\binom nn\bigr)\) be the Kozlov vector,
let \(r=2\) and \(\vec d=(0,d)\) with \(1\le d\le n-1\), and put \(m=n-d\). Then for indices
\(1\le i,j\le n\), writing \(t=i-d\),
\begin{equation}\label{eq:kozlov-matrix}
  \extten(\vec a;d)[i][j]=0 \iff 1\le t\le m,\quad 1\le j\le m,\quad\text{and}\quad
   (j<t\ \text{or}\ j=t+1),
\end{equation}
and \(\extten(\vec a;d)[i][j]=1\) otherwise. Consequently the vertices of
\(\RA(\vec a;0,d)=\rasimp(\vec a)+\shift^d(\rasimp(\vec a))\) are exactly the points
\(v_i+u_j\) with \(\extten(\vec a;d)[i][j]=1\); these are pairwise distinct, and
\begin{equation}\label{eq:vertex-count-kozlov}
  \#\operatorname{vert}\bigl(\RA(\vec a;0,d)\bigr)\;=\;n^2-\frac{(m-1)(m+2)}{2}.
\end{equation}
\label{thm:kozlov-matrix}
\label{thm-kozlov-matrix}
\end{theorem}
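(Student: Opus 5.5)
The plan is to decide each entry of \(\extten(\vec a;d)\) with the unique-maximization criterion (Lemma \ref{lem-vertexdecomp}(1)). The point \(v_i+u_j\) is a vertex of \(\RA(\vec a;0,d)\) if and only if some linear functional \(\varphi\) on \(\mathbb R^N\) is uniquely maximized over \(\rasimp(\vec a)\) at \(v_i\) and over \(\shift^d(\rasimp(\vec a))\) at \(u_j\). The same uniqueness also shows that distinct surviving pairs give distinct points, because the face of a Minkowski sum exposed by \(\varphi\) is the sum of the faces of the summands exposed by \(\varphi\). This settles the pairwise-distinctness clause.

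Write \(\varphi_c\) for the \(c\)-th coordinate of \(\varphi\). Then
\[
  \varphi(v_i)=\sum_{c\le i}a_c\varphi_c, \qquad \varphi(u_j)=\sum_{c\le j}a_c\varphi_{d+c}.
\]
So the condition says that \(i\) is the unique argmax of one sequence of prefix sums and \(j\) is the unique argmax of another. The two sequences are coupled only through the shared block, where I put \(e_s=\varphi_{d+s}\) for \(1\le s\le m\). Coordinates in \([1,d]\) enter only the first summand and coordinates in \([n+1,N]\) enter only the second, so they are free.

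First I dispose of everything outside the block \(1\le t\le m\), \(1\le j\le m\). If \(t\le0\), i.e.\ \(i\le d\), choose \(\varphi_1,\dots,\varphi_i\) large positive and \(\varphi_{i+1},\dots,\varphi_d\) very negative. This makes \(v_i\) the unique maximizer whatever the shared block does, leaving the \(e_s\) free to make \(u_j\) the maximizer; similarly if \(j>m\). Inside the block I reduce the criterion to sign conditions on weighted interval sums:
\begin{itemize}
\item for the first summand, \(\sum_{s=t'+1}^{t}a_{d+s}e_s>0\) for \(1\le t'<t\), and \(\sum_{s=t+1}^{t''}a_{d+s}e_s<0\) for \(t<t''\le m\);
\item for the second summand, the same inequalities with weights \(a_s\) around \(j\), for comparison indices \(0\le j'<j\) and \(j<j''\le m\).
\end{itemize}
Comparisons reaching into the free blocks are absorbed by the free coordinates.

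\emph{Zero cases.} If \(j=t+1\), the first summand needs \(a_{d+t+1}e_{t+1}<0\) (take \(t''=t+1\)). The second needs \(a_{t+1}e_{t+1}>0\) (take \(j'=t\)). These are incompatible by positivity alone.

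If \(j<t\), put \(B_k=\sum_{s=j+1}^{k}a_se_s\). The second summand forces \(B_k<0\) for every \(j<k\le t\). With \(\rho_s=a_{d+s}/a_s\), Abel summation gives
\[
  \sum_{s=j+1}^{t}a_{d+s}e_s=\sum_{s=j+1}^{t-1}B_s(\rho_s-\rho_{s+1})+\rho_tB_t<0,
\]
because \(\rho\) is nonincreasing and positive. This contradicts the first summand's requirement at \(t'=j\). The only property of \(\vec a\) used is the monotonicity of \(\rho_s\), and for the Kozlov vector \(\rho_s=\binom n{d+s}/\binom ns\) is decreasing in \(s\) for \(d\ge1\). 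That is a short binomial check.

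\emph{Surviving cases in the block.} For \(j=t\), take \(e_s>0\) for \(s\le t\) and \(e_s<0\) for \(s>t\); every sign condition holds trivially. For \(j\ge t+2\), take \(e_s\) positive for \(s\le t\) and \(e_{t+1}=-1\). On \(s=t+2,\dots,j\), take positive values tuned so that every partial sum \(\sum_{s=t+1}^{k}a_{d+s}e_s\) stays negative while \(\sum_{s=j'+1}^{j}a_se_s\) is positive for every \(j'<j\). Finally take \(e_s\) very negative for \(s>j\). The tuning is possible precisely because \(\rho\) is decreasing: later shared coordinates are weighted relatively less by the first summand than by the second.

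Counting the zeros then gives \(\binom m2\) entries with \(j<t\) plus \(m-1\) entries with \(j=t+1\), a total of \((m-1)(m+2)/2\), which yields \eqref{eq:vertex-count-kozlov}.

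I expect the main obstacle to be the explicit construction for \(j\ge t+2\). The interval sums at all intermediate \(k\) and \(j'\) must be controlled simultaneously. I would first try geometric scaling, \(e_{t+1+q}=\varepsilon\lambda^{q}\) with \(\lambda\) chosen between consecutive values of \(\rho\), and check the strict inequalities by the same Abel-summation bookkeeping used in the \(j<t\) case.
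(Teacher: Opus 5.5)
Your case split is the paper's own: the sign clash on the adjacent diagonal, Abel summation below the diagonal, a threshold functional on the diagonal, and the same count. Two steps, however, do not go through as written.

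\textbf{The row $i=d$.} Your outside-block argument pins $v_i$ with a large negative penalty on $\varphi_{i+1},\dots,\varphi_d$, but at $i=d$ that range is empty. Then
$\langle\varphi,v_{d+t'}\rangle-\langle\varphi,v_d\rangle=\sum_{s\le t'}a_{d+s}e_s$ for $1\le t'\le m$, which involves none of $\varphi_1,\dots,\varphi_d$. So $v_d$ is exposed only if all of these shared-block sums are negative. The claim ``whatever the shared block does'' is false there, and the $e_s$ are not free. For $j>m$ your column argument still covers the pair, but the $m$ entries $(d,j)$ with $1\le j\le m$ are left unproved.

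Your first-summand list inside the block has the same blind spot. It starts at $t'=1$, but the comparison at $t'=0$, $v_{d+t}$ against $v_d$, is a pure shared-block condition that no free coordinate absorbs. Your constructions satisfy it only because $e_s>0$ for $s\le t$.

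The repair, which is the paper's Proposition~\ref{prop-row-below}, is to spend the first shared coordinate: set $e_1=-M$.
\begin{itemize}
\item Every $u_{j'}$ carries $a_1$ at coordinate $d+1$, so this lowers all second-summand values by the same $Ma_1$ and cannot move their argmax.
\item Expose $u_j$ using $e_2,\dots,e_m$ and $\varphi_{n+1},\dots,\varphi_N$.
\item Then $\sum_{s\le t'}a_{d+s}e_s=-Ma_{d+1}+(\text{terms independent of }M)$ is negative for every $t'\ge1$ once $M$ is large.
\end{itemize}

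\textbf{The gap-$\ge2$ case.} This is the substantive case, and you leave it as a plan. Your sign conditions are right, but the existence of the tuning is only asserted. It closes in a few lines, with no geometric scaling needed. Take $e_{t+1}=-1$ and $e_s>0$ on $[t+2,j]$.
\begin{itemize}
\item On the first summand the partial sums $\sum_{s=t+1}^{k}a_{d+s}e_s$ increase in $k$, so only $k=j$ binds: $\sum_{s=t+2}^{j}a_{d+s}e_s<a_{d+t+1}$.
\item On the second summand only $j'=t$ binds: $\sum_{s=t+2}^{j}a_se_s>a_{t+1}$. For $j'<t$ you only add positive terms, and for $t+1\le j'<j$ every term is positive.
\end{itemize}
Now put $e_s=\varepsilon y_s$ for an arbitrary positive profile $y$. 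A suitable $\varepsilon$ exists if and only if $\sum_s\rho_sa_sy_s<\rho_{t+1}\sum_sa_sy_s$. That says a weighted mean of $\rho_{t+2},\dots,\rho_j$ lies below $\rho_{t+1}$, which strict decrease of $\rho$ gives.

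The paper's Proposition~\ref{prop-gapK} is the degenerate choice with $y$ concentrated at $s=j$: zeros in between, and $e_j=(1+\delta)/a_j$ with $(1+\delta)\rho_j<\rho_{t+1}$. It shows that only $\rho_{t+1}>\rho_j$ is used. Strictness is essential here, unlike below the diagonal where non-increasing $\rho$ suffices. It is exactly what fails for $\onesvec(n)$.
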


The proof is in Appendix \ref{sec-app-kozlov}. The index pairs are classified by six propositions,
stated and proved in Appendix \ref{sec-app-cases}: each covers one region of the matrix, and together
they leave no pair unaccounted for, and only two of them use anything about \(\vec a\) beyond
positivity, as noted at the head of this section.

The Kozlov matrices \(\extten(\kozvec(5);d)\), set beside the all-ones matrices
displayed after Theorem \ref{thm-ones}, are
\[
d=1:\;\begin{pmatrix}1&1&1&1&1\\1&0&1&1&1\\0&1&0&1&1\\0&0&1&0&1\\0&0&0&1&1\end{pmatrix} \qquad d=2:\;\begin{pmatrix}1&1&1&1&1\\1&1&1&1&1\\1&0&1&1&1\\0&1&0&1&1\\0&0&1&1&1\end{pmatrix}
\]
\[
d=3:\;\begin{pmatrix}1&1&1&1&1\\1&1&1&1&1\\1&1&1&1&1\\1&0&1&1&1\\0&1&1&1&1\end{pmatrix} \qquad d=4:\;\begin{pmatrix}1&1&1&1&1\\1&1&1&1&1\\1&1&1&1&1\\1&1&1&1&1\\1&1&1&1&1\end{pmatrix}
\]
Comparing with the all-ones matrices makes the difference visible entry by entry. Outside the block
\(1\le t\le m\), \(1\le j\le m\) both matrices are all ones, and inside it they agree except in the
region \(j>t+1\), which is zero for the all-ones vector and one here. At \(d=1\) they
differ in three entries, namely \((i,j)=(2,3)\), \((2,4)\) and \((3,4)\); at \(d=2\) in the single
entry \((3,3)\); and at \(d=3\) not at all. These counts are \(\binom{m-1}{2}=3\), \(1\) and \(0\),
as they must be.

The vertex count settles the vertex set of the rank-two Kozlov polytope outright. Ranks beyond the
second are not settled here; see the corresponding remark in Part III.

Two checks on \eqref{eq:vertex-count-kozlov}. It agrees with direct polyhedral computation
(Section \ref{sec-methods}); and at \(d=n-1\) it gives \(m=1\) and \(n^2-0=n^2\), the
one-shared-coordinate regime of Proposition \ref{prop-regimes-boundary}, as it must. Contrast the
all-ones count \(n^2-(n-d)(n-d-1)\) of Theorem \ref{thm-ones}: the two differ, which is the
\(\vec a\)-dependence of Section \ref{sec-vertices} in its sharpest form.
\subsection{The facet description at rank two}
\label{sec-hrep2}
Section \ref{sec-vertices} settles the vertices at rank two for both leg vectors. The facets are known
for the Kozlov simplex itself (Corollary \ref{cor-facets}) and conjectured for the all-ones sum
(Conjecture \ref{conj-ones-hrep}); the remaining case is the Kozlov sum, and it has the same shape.

Two features make the statement short. The offsets cost nothing: the support function of a Minkowski
sum is the sum of the support functions, so the supporting value of \(\RA(\vec a;0,d)\) in a
direction \(u\) is
\(h(u,\rasimp(\vec a))+h(u,\shift^d(\rasimp(\vec a)))\), a sum of maxima over \(n\) known vertices. The
candidate directions are few: each summand contributes the \(n\) inequalities of Corollary
\ref{cor-facets}, to which exactly two more must be added.

\begin{conjecture}
Let \(\vec a=\kozvec(n)\) be the Kozlov vector, \(r=2\), \(\vec d=(0,d)\) with \(1\le d\le n-1\),
\(N=n+d\) and \(m=n-d\), and put \(t=j-d\). Then \(\RA(\vec a;0,d)\) is cut out in \(\mathbb R^N\) by
the equation \(x_1=n\), the first chain with its offsets,
\begin{equation}\label{eq:kozlov-hrep2}
  \frac{x_j}{\binom nj}-\frac{x_{j+1}}{\binom n{j+1}}\ \ge\ -c_j,\qquad
  c_j=\begin{cases}
    0, & 1\le j\le d-1,\\[2pt]
    \dfrac{\binom n1}{\binom n{d+1}}, & j=d,\\[6pt]
    \dfrac{\binom n{t+1}}{\binom n{j+1}}-\dfrac{\binom nt}{\binom nj}, & d+1\le j\le n-1,
  \end{cases}
\end{equation}
the second chain, which needs no offset,
\begin{equation}\label{eq:kozlov-hrep2-second}
  \frac{x_{d+j}}{\binom nj}\ \ge\ \frac{x_{d+j+1}}{\binom n{j+1}}\qquad(1\le j\le n-1),
\end{equation}
and the three remaining inequalities
\begin{equation}\label{eq:kozlov-hrep2-extras}
  x_{d+1}\ \ge\ \tbinom n1=n,\qquad x_{n+1}\ \le\ \tbinom n{m+1},\qquad x_N\ \ge\ 0 .
\end{equation}
The three cases of \(c_j\) are one formula, the last, read with \(\binom n0\) replaced by \(0\) and
\(\binom ns=0\) for \(s<0\): the leg vector has no entry in position \(0\) or below. Exactly one of
these \(2n+1\) inequalities is redundant, the second chain at \(j=1\), so
\(\RA(\vec a;0,d)\) has
\(2n\) facets.
\label{conj:kozlov-hrep2}
\label{conj-kozlov-hrep2}
\end{conjecture}

\begin{remark}
(\emph{Where the offsets come from, and why only one chain has them.}) Each of the \(2n+1\) inequalities is
\(\langle u,x\rangle\le h(u,\rasimp(\vec a))+h(u,\shift^d(\rasimp(\vec a)))\) for its own direction
\(u\), and in every case \emph{the summand the inequality belongs to contributes \(0\)}: a chain link of
Corollary \ref{cor-facets} is supported at value \(0\) on the simplex it cuts. So the whole offset is
the \emph{other} summand's supporting value, and the asymmetry between the two chains is an asymmetry
between the two summands' positions.

For the first chain, the shifted copy occupies coordinates \(d+1,\dots,N\), so it contributes nothing
while \(j+1\le d\), whence \(c_j=0\) for \(j\le d-1\); and from \(j=d\) on it contributes the
staircase entry that has entered the window, beginning with its leading \(\binom n1=n\). In terms of
the ratios \(\rho_j=\binom n{d+j}\big/\binom nj\) of Lemma \ref{lem-binomial-ratio}, the offset is
\(c_j=1/\rho_{t+1}-1/\rho_t\) with \(1/\rho_0\) read as \(0\), which is positive because \(\rho\) is
strictly decreasing.

For the second chain the unshifted copy is the intruder, and it contributes nothing at all: the
supporting value is \(\max\lbrace0,\ \binom n{s+1}/\binom n{j+1}-\binom ns/\binom nj\rbrace\) with
\(s=d+j>j\), and that maximum is \(0\) exactly because \(\rho_{j+1}\le\rho_j\), again
Lemma \ref{lem-binomial-ratio}(2). The two chains therefore differ not by accident of presentation but
because the same monotonicity that makes one offset positive makes the other vanish.
\label{rem-hrep2-offsets}
\end{remark}

\begin{mexample}
Continuing Example \ref{ex-ones-facets} with the same \(n=5\), \(d=2\) but \(\vec a\) the Kozlov vector
\((5,10,10,5,1)\): now \(\RA(\vec a;0,d)\) is again \(6\)-dimensional but has \(20\) vertices
and \(10=2n\)
facets, one more than the all-ones sum. With \(x_1=5\) understood they are
\[
\begin{array}{lll}
 x_2\le 10, & x_2-x_3\ge-5, & x_3\ge 5,\\
 x_3-2x_4\ge-15, & x_4-5x_5\ge-40, & x_4-x_5\ge 0,\\
 x_5-2x_6\ge 0, & x_6-5x_7\ge 0, & x_6\le 5,\\
 x_7\ge 0 . & &
\end{array}
\]
The two \(\mu\)-chains are visible in the coefficients \(x_3-2x_4\), \(x_5-2x_6\),
\(x_4-5x_5\), \(x_6-5x_7\) are the ratios \(\binom5j/\binom5{j+1}\) of Corollary \ref{cor-facets}, one
chain from each window --- together with the two extra inequalities \(x_3\ge5\) and \(x_2\le10\)
that belong to neither summand. The extra facet relative to the all-ones case is \(x_4-x_5\ge0\).
\label{ex-kozlov-facets}
\end{mexample}

The description was checked against the Minkowski sum itself, with the redundant member the same one
each time (Section \ref{sec-methods}). What is missing is completeness: that every point satisfying these inequalities really
decomposes as a point of \(\rasimp(\vec a)\) plus a point of \(\shift^d(\rasimp(\vec a))\). Validity needs no
argument: the supporting value of a Minkowski sum in a direction is the sum of the summands'
supporting values in that direction, which is the additivity recorded at the head of this
subsection.

Two remarks on how not to prove it. The two extra inequalities are not inherited from either summand
and cannot be, since they are exactly where the naive expectation that a Minkowski sum's facets come
from its summands' facets breaks down --- and it breaks down already at \(n=3\). Nor can a
decomposition be produced by truncating \(x\) against the second summand's maximum profile, because
\(\rasimp(\vec a)\) lies in the hyperplane \(\lbrace x_1=n\rbrace\) and a truncation has no reason to
respect it.
\section{Methods: what was checked by computer, and how}
\label{sec-methods}
Several statements below are labelled \emph{verified} rather than \emph{proved}, and several proved ones were
additionally checked by machine before being trusted. This section says which, so that the claims
are auditable rather than atmospheric. Two systems were used: \textbf{\textbf{SageMath}} 10.9 for exact rational
polyhedral computation --- vertex and facet enumeration, Minkowski sums, and equality of polytopes
--- and \textbf{\textbf{Macaulay2}} for the module-theoretic statements, there using Amata and Crupi's own
\texttt{ExteriorIdeals} and \texttt{ExteriorModules} packages
\autocite{AmataCrupi2018ExteriorIdeals,AmataCrupi2021,Macaulay2,SageMath}.

Throughout, \emph{checked} means checked along a route independent of the formula being tested: a vertex
description against a facet description, a Minkowski sum against direct enumeration of the
achievable set, a closed form against brute-force construction. A check that merely re-evaluates
the same expression is not recorded here.

\begingroup\small
\begin{longtable}{@{}p{0.27\linewidth}p{0.38\linewidth}>{\raggedright\arraybackslash}p{0.27\linewidth}@{}}
  \toprule
  \emph{Statement} & \emph{What was checked, and over what range} & \emph{Script} \\
  \midrule\endfirsthead
  \toprule
  \emph{Statement} & \emph{What was checked, and over what range} & \emph{Script} \\
  \midrule\endhead
  Corollary~\ref{cor:facets}, the facet description of $\Koz(n)$
    & the polytope built from the $n$ vertices and the polytope cut out by the equation and the
      $n$ inequalities, checked equal as polyhedra, $n = 1,\dots,9$, each with $n$ vertices and
      $n$ facets in dimension $n-1$; the degenerate $n = 1$ is the point $\lbrace x_1=1\rbrace$,
      with no facets
    & \texttt{\footnotesize kozlov\_\allowbreak facets\_\allowbreak verification.sage} \\
  \addlinespace
  Theorem~\ref{thm:ones}, the all-ones extremal matrix
    & every entry against direct polyhedral computation of the vertex set, $n = 3,\dots,9$ and
      $d = 0,\dots,n$; $2296$ entries, no discrepancy. The transposed reading disagrees on $15$ of
      the $(n,d)$ pairs, which is what forces the orientation
    & \texttt{\footnotesize allones\_\allowbreak rank2\_\allowbreak closed\_\allowbreak form\_\allowbreak proof\_\allowbreak check.sage} \\
  \addlinespace
  Theorem~\ref{thm:ones} and Theorem~\ref{thm:kozlov-matrix}, both extremal matrices as restated
    & every entry against direct polyhedral computation of the vertex set, for both leg vectors,
      $n = 2,\dots,7$ and $1 \le d \le n-1$; no discrepancy
    & \texttt{\footnotesize verify\_\allowbreak section8\_\allowbreak restated.sage} \\
  \addlinespace
  The \emph{proofs} of Propositions~\ref{prop:regimes-tie}--\ref{prop:gapK}
    & a different and stronger check: the functional each proof \emph{specifies} is built and
      tested to expose the intended pair, rather than the statement being tested. $2604$ checks,
      all passing
    & \texttt{\footnotesize verify\_\allowbreak section8\_\allowbreak proofs.py} \\
  \addlinespace
  Theorem~\ref{thm:kozlov-matrix}, the vertex count \eqref{eq:vertex-count-kozlov}
    & against direct polyhedral computation, every $n = 3,\dots,8$ and $1 \le d \le n-1$:
      $27$ cases, no discrepancy
    & \texttt{\footnotesize verify\_\allowbreak section8\_\allowbreak restated.sage} \\
  \addlinespace
  Conjecture~\ref{conj:ones-hrep}
    & exact polyhedral equality against the Minkowski sum, every $n = 2,\dots,10$ and
      $1 \le d \le n-1$: $45$ cases, no exceptions
    & \texttt{\footnotesize rank2\_\allowbreak hrep\_\allowbreak direct\_\allowbreak candidate.sage} \\
  \addlinespace
  Conjecture~\ref{conj:kozlov-hrep2}, including the closed-form offsets $c_j$
    & exact polyhedral equality against the Minkowski sum, every $n = 2,\dots,9$ and
      $1 \le d \le n-1$: $36$ cases, no exceptions, exactly one member redundant in each; each
      $c_j$ also checked against the support value measured directly over the vertex sums. The
      redundant member is the second chain at $j = 1$ in every case, checked separately for
      $n = 3,\dots,7$
    & \texttt{\footnotesize kozlov\_\allowbreak hrep\_\allowbreak offsets\_\allowbreak closed\_\allowbreak form.sage},
      \texttt{\footnotesize rank2\_\allowbreak hrep\_\allowbreak direct\_\allowbreak candidate.sage} \\
  \addlinespace
  Proposition~\ref{prop:dimension}, the dimension formula
    & against exact polyhedral computation, $40$ configurations, $r = 2,3,4$ and $n = 2,\dots,5$
    & \texttt{\footnotesize kozlov\_\allowbreak facets\_\allowbreak verification.sage} \\
  \addlinespace
  Proposition~\ref{prop:splitting}, both parts
    & componentwise identity and least-generator-degree agreement, in Macaulay2, at $n = 4$,
      $r = 2$, $\vec d = (0,1)$, with non-monomial components in both the proper and the
      non-proper case
    & \texttt{\footnotesize ids\_\allowbreak initial\_\allowbreak submodule\_\allowbreak check.m2} \\
  \addlinespace
  Theorem~\ref{thm:main}, and the scope of Definition~\ref{def:ids}
    & random monomial submodules sampled and their Hilbert functions located inside the predicted
      polytope; and a negative control, deliberately non-monomial submodules landing outside it
    & \texttt{\footnotesize rankr\_\allowbreak minkowski\_\allowbreak random\_\allowbreak sampling\_\allowbreak m2.py},
      \texttt{\footnotesize rankr\_\allowbreak nonmonomial\_\allowbreak negative\_\allowbreak control\_\allowbreak m2.py} \\
  \bottomrule
\end{longtable}
\endgroup
\subsection{Formal verification}
\label{sec-methods-lean}
The computations above \emph{check} statements against independent computation; they do not prove
anything. Separately, \textbf{\textbf{a substantial part of this paper has been formalised and machine-checked in
Lean 4 with Mathlib}}, in two layers:

\begin{itemize}
\item the \textbf{combinatorial spine}, Sections \ref{sec-rank1}--\ref{sec-main}: local LYM, the chain description
of the Kozlov simplex, Kozlov's theorem itself, the sumset identity in its combinatorial form, the
main theorem at every rank, and Section \ref{sec-permissive}'s permissive variant in full ---
including Proposition \ref{prop-permissive-fibres}, whose proof is short but was not written down at
all before being formalised;
\item the \textbf{vertex structure}, Section \ref{sec-vertices} and Appendix \ref{sec-app}: the vertex criterion,
the extremal matrix for both leg vectors, and the vertex count.
\end{itemize}

The printed proofs and the formal ones are deliberately the same proofs rather than two proofs of the
same statements: where the formalisation found a shorter route, the text was rewritten to take it.

\textbf{\textbf{What is \emph{not} formalised is worth stating precisely, since the table below is otherwise easy to
over-read.}} The three degenerate-regime propositions
\ref{prop-regimes-tie}--\ref{prop-regimes-boundary} are checked computationally, as the previous table
records, but are not formalised. Neither are the two conjectures, which are halfspace descriptions
and would need Minkowski--Weyl, absent from Mathlib. Within what \emph{is} formalised, three clauses are
deliberately omitted: the affine-independence half of Lemma \ref{lem-mu} and of
Lemma \ref{lem-mu-permissive}, hence the "these are exactly the facets" half of
Corollary \ref{cor-facets} and the codimension half of Proposition \ref{prop-properness-facet} --- in
each case the halfspace description is proved and its \emph{facet status} is not. Finally, the formal
development works throughout in the degree-\(0\)-inclusive coordinates, so the \(\proj\) forms of
these statements, and with them the translation of \eqref{eq:koz-as-ra}, are not formalised; and
Proposition \ref{prop-sumset}'s first equality, the algebraic dictionary between Hilbert functions and
\emph{ff}-vectors, is outside the formalised part, which takes the combinatorial side as its definition.
So Theorem \ref{thm-main}'s formal statement is the combinatorial identity, not the module-theoretic
one.

The development compiles with no \texttt{sorry}, and depends only on the three standard Mathlib axioms
(\texttt{propext}, \texttt{Classical.choice}, \texttt{Quot.sound}), on Lean \texttt{v4.34.0-rc2}. It ships in the \texttt{anc}
directory with the scripts.

\begingroup\small
\begin{longtable}{@{}p{0.34\linewidth}>{\raggedright\arraybackslash}p{0.60\linewidth}@{}}
  \toprule
  \emph{Statement} & \emph{Lean declaration} \\
  \midrule\endfirsthead
  \toprule
  \emph{Statement} & \emph{Lean declaration} \\
  \midrule\endhead
  \multicolumn{2}{@{}l}{\emph{The combinatorial spine: rank one through the main theorem}}\\
  Lemma~\ref{lem:lym} & \texttt{\footnotesize lym} \\
  Lemma~\ref{lem:mu} & \texttt{\footnotesize hull\_\allowbreak skel\_\allowbreak eq}, \texttt{\footnotesize convex\_\allowbreak lymChain} \\
  Theorem~\ref{thm:kozlov} & \texttt{\footnotesize kozlov} \\
  Corollary~\ref{cor:facets} & \texttt{\footnotesize hull\_\allowbreak Sset\_\allowbreak eq} \\
  Definition~\ref{def:ffvector} & \texttt{\footnotesize ffvec}, \texttt{\footnotesize ffSet} \\
  Proposition~\ref{prop:sumset} & \texttt{\footnotesize ff\_\allowbreak eq\_\allowbreak sumset} \\
  Theorem~\ref{thm:main} & \texttt{\footnotesize main}, \texttt{\footnotesize main\_\allowbreak chain} \\
  Proposition~\ref{prop:edge-cases}(1), (3) & \texttt{\footnotesize edge\_\allowbreak case\_\allowbreak r\_\allowbreak one}, \texttt{\footnotesize edge\_\allowbreak case\_\allowbreak n\_\allowbreak one} \\
  \addlinespace
  \multicolumn{2}{@{}l}{\emph{The permissive convention}}\\
  Lemma~\ref{lem:mu-permissive} & \texttt{\footnotesize hull\_\allowbreak skel\_\allowbreak perm\_\allowbreak eq} \\
  Proposition~\ref{prop:permissive-hull} & \texttt{\footnotesize permissive\_\allowbreak hull}, \texttt{\footnotesize hull\_\allowbreak SsetPerm\_\allowbreak eq} \\
  Proposition~\ref{prop:properness-facet} & \texttt{\footnotesize rasimpB\_\allowbreak eq\_\allowbreak insert\_\allowbreak hull}, \texttt{\footnotesize properness\_\allowbreak tight\_\allowbreak face} \\
  Proposition~\ref{prop:permissive-fibres} & \texttt{\footnotesize permissive\_\allowbreak fibres}, \texttt{\footnotesize fibre\_\allowbreak eq}, \texttt{\footnotesize fibres\_\allowbreak disjoint}, \texttt{\footnotesize exists\_\allowbreak relabel} \\
  \addlinespace
  \multicolumn{2}{@{}l}{\emph{The vertex structure at rank two, with Appendix~\ref{sec:app}}}\\
  Lemma~\ref{lem:exposed-vertex} & \texttt{\footnotesize mem\_\allowbreak extremePoints\_\allowbreak convexHull\_\allowbreak iff} \\
  Lemma~\ref{lem:vertexdecomp}(1) & \texttt{\footnotesize mem\_\allowbreak extremePoints\_\allowbreak add\_\allowbreak iff} \\
  Lemma~\ref{lem:vertexdecomp}(2) & \texttt{\footnotesize eq\_\allowbreak of\_\allowbreak add\_\allowbreak eq\_\allowbreak of\_\allowbreak mem\_\allowbreak extremePoints} \\
  Theorem~\ref{thm:ones} & \texttt{\footnotesize thm\_\allowbreak ones}, \texttt{\footnotesize thm\_\allowbreak ones\_\allowbreak windows} \\
  Remark~\ref{rem:digraph-collapses} & \texttt{\footnotesize exists\_\allowbreak strictMax\_\allowbreak two\_\allowbreak iff} \\
  Lemma~\ref{lem:binomial-ratio} & \texttt{\footnotesize choose\_\allowbreak ratio\_\allowbreak lt}, \texttt{\footnotesize choose\_\allowbreak ratio\_\allowbreak pos\_\allowbreak iff}, \texttt{\footnotesize choose\_\allowbreak ratio\_\allowbreak eq\_\allowbreak zero} \\
  Proposition~\ref{prop:row-below} & \texttt{\footnotesize prop\_\allowbreak row\_\allowbreak below\_\allowbreak geom} \\
  Proposition~\ref{prop:col-beyond} & \texttt{\footnotesize prop\_\allowbreak col\_\allowbreak beyond\_\allowbreak geom} \\
  Proposition~\ref{prop:diagonal} & \texttt{\footnotesize prop\_\allowbreak diagonal\_\allowbreak geom}, \texttt{\footnotesize wDiag\_\allowbreak unique\_\allowbreak max} \\
  Proposition~\ref{prop:adjacent} & \texttt{\footnotesize prop\_\allowbreak adjacent\_\allowbreak upper}, \texttt{\footnotesize prop\_\allowbreak adjacent\_\allowbreak lower} \\
  Proposition~\ref{prop:below-diagonal} & \texttt{\footnotesize prop\_\allowbreak below\_\allowbreak diagonal} \\
  Proposition~\ref{prop:gapK} & \texttt{\footnotesize prop\_\allowbreak gapK\_\allowbreak geom} \\
  Theorem~\ref{thm:kozlov-matrix}, the classification & \texttt{\footnotesize extremal\_\allowbreak matrix\_\allowbreak iff}, \texttt{\footnotesize extremal\_\allowbreak matrix\_\allowbreak kozlov} \\
  Theorem~\ref{thm:kozlov-matrix}, the count \eqref{eq:vertex-count-kozlov} & \texttt{\footnotesize card\_\allowbreak vertices}, \texttt{\footnotesize card\_\allowbreak vertices\_\allowbreak kozlov} \\
  \bottomrule
\end{longtable}
\endgroup

The development is about \(3\,300\) lines of Lean in twelve files, and its own \texttt{README} carries the
declaration-by-declaration correspondence, including every omission listed above; that file rather
than this table is the authoritative record.
\subsection{Availability}
\label{sec-methods-anc}
Every script named above, together with the recorded output of the runs behind the counts quoted
and the Lean development, is included in the \texttt{anc} (ancillary files) directory accompanying this
article. The two harnesses
for Section \ref{sec-vertices} are worth distinguishing from each other, because they answer
different questions: one tests the \emph{statements} against an independent computation of the vertex
set, the other tests the \emph{proofs}, by constructing the functional each proof specifies and checking
that it does what the proof says it does. The second caught a proof that was wrong while its
statement was right, which is the case the first cannot see.

\appendix
\section{Proofs for the rank-two vertex structure}
\label{sec-app}
\label{sec:app}
This appendix carries the proofs of everything stated in Section \ref{sec-vertices}, together with
the auxiliary lemmas and the notation those proofs need and nothing else does. It is ordered to
match that section: the criterion first, then the three regimes, then the all-ones case, then the
notation and the case analysis for the Kozlov vector, and finally the two results those cases
assemble into. Nothing here is used anywhere else in the paper.
\subsection{The vertex criterion}
\label{sec-app-criterion}
Everything in Section \ref{sec-vertices} rests on the following two lemmas, which replace the word "vertex" by a
condition that can be checked by exhibiting one linear functional. They are stated separately
because they do different work: the first is about a single polytope presented by its generators,
the second about how such presentations add.

\begin{lemma}
Let \(S\subseteq\mathbb R^N\) be a finite non-empty set and \(P=\conv(S)\). For \(x\in\mathbb R^N\)
the following are equivalent:
\begin{enumerate}
\item \(x\) is a vertex of \(P\);
\item \(x\in S\) and \(x\notin\conv(S\setminus\lbrace x\rbrace)\);
\item there is a linear functional \(\varphi\) with \(\langle\varphi,x\rangle>\langle\varphi,y\rangle\)
for every \(y\in S\setminus\lbrace x\rbrace\).
\end{enumerate}
\label{lem:exposed-vertex}
\label{lem-exposed-vertex}
\end{lemma}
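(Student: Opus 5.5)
I will prove the cycle (1)\(\Rightarrow\)(2)\(\Rightarrow\)(3)\(\Rightarrow\)(1), with the vertex meaning extreme point as fixed at the head of Section \ref{sec-vertices}.

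For (1)\(\Rightarrow\)(2), I first show that an extreme point \(x\) of \(P\) lies in \(S\). Write \(x=\sum_p\lambda_ps_p\) as a convex combination of points of \(S\) with every \(\lambda_p>0\). If two of the \(s_p\) were distinct, then \(x\) would be the interior point of a segment in \(P\). To see this, split off one term: \(x=\lambda_1s_1+(1-\lambda_1)y\), where \(y\) is the renormalised rest, which lies in \(P\). Extremality forces \(s_1=y=x\), and by induction every \(s_p=x\), so \(x\in S\). The same argument applied inside \(\conv(S\setminus\{x\})\subseteq P\) shows that \(x\notin\conv(S\setminus\{x\})\). Indeed, a convex representation of \(x\) by points of \(S\setminus\{x\}\) would force each of those points to equal \(x\), which is absurd.

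For (2)\(\Rightarrow\)(3), I use strict separation. The set \(C=\conv(S\setminus\{x\})\) is compact, convex, and does not contain \(x\). If \(C\) is empty, meaning \(S=\{x\}\), any \(\varphi\) works vacuously. Otherwise let \(c\in C\) be the nearest point to \(x\) and take \(\varphi=x-c\). The standard obtuse-angle inequality \(\langle x-c,y-c\rangle\le0\) for \(y\in C\) then gives
\[
\langle\varphi,y\rangle\le\langle\varphi,c\rangle<\langle\varphi,x\rangle \quad\text{for every } y\in C,
\]
since \(\langle\varphi,x-c\rangle=\|x-c\|^2>0\). This covers every \(y\in S\setminus\{x\}\). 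I expect this step to be the only one with real content. Its main obstacle is merely quoting or sketching the projection argument, including existence of the nearest point by compactness and the variational inequality from convexity.

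For (3)\(\Rightarrow\)(1), I first note that \(\varphi\) is maximised on \(P\) at \(x\), and uniquely, because any point of \(P\) is a convex combination \(z=\sum\lambda_ys_y\). Then \(\langle\varphi,z\rangle\le\langle\varphi,x\rangle\), with equality only if all weight sits on \(x\), that is, \(z=x\). This also shows \(x\in S\subseteq P\), which is part of hypothesis (3) implicitly: strictness against all other points of \(S\) still requires \(x\in P\), so I will check that the statement intends \(x\in S\), or else derive it. On reflection, if \(x\notin S\) then (3) can hold vacuously for points outside \(P\), so I read (3) with \(x\in S\), as (2) does, and state that reading.

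Granting \(x\in P\), suppose \(x=\lambda y+(1-\lambda)z\) with \(y,z\in P\) and \(0<\lambda<1\). Linearity gives \(\langle\varphi,x\rangle=\lambda\langle\varphi,y\rangle+(1-\lambda)\langle\varphi,z\rangle\). Since both terms are at most the maximum \(\langle\varphi,x\rangle\), both must equal it, and uniqueness of the maximiser forces \(y=z=x\). So \(x\) is extreme.
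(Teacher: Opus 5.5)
Your proof is correct and follows the paper's route in all three steps: the same splitting argument for (1)\(\Rightarrow\)(2), strict separation of \(x\) from \(\conv(S\setminus\lbrace x\rbrace)\) for (2)\(\Rightarrow\)(3) (which you only make explicit via the nearest-point projection, where the paper cites it), and unique maximization of \(\varphi\) on \(P\) for (3)\(\Rightarrow\)(1). You are also right that (3)\(\Rightarrow\)(1) needs \(x\in S\), or just \(x\in P\), which together with (3) forces \(x\in S\). Any point outside \(P\) satisfies (3) for a separating \(\varphi\) without being a vertex, and the paper's proof tacitly assumes \(x\in S\) when it says equality forces all the weight onto \(x\). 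So state that hypothesis outright, and drop your circular sentence claiming the argument ``shows'' \(x\in S\).
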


\begin{proof}
(3) \(\Rightarrow\) (1). Every \(y\in P\) is a convex combination \(\sum_p\lambda_ps_p\) of elements
of \(S\), so \(\langle\varphi,y\rangle\le\langle\varphi,x\rangle\) with equality only if
\(\lambda_p=0\) for every \(s_p\ne x\), that is only if \(y=x\). So \(\lbrace x\rbrace\) is the face
of \(P\) exposed by \(\varphi\), and an exposed point is a vertex.

(1) \(\Rightarrow\) (2). A vertex of \(\conv(S)\) lies in \(S\): writing it as a convex combination
of elements of \(S\), extremality forces all the weight onto one of them; and if
\(x\in\conv(S\setminus\lbrace x\rbrace)\) then \(x\) is a convex combination of points of \(P\) other
than \(x\), hence interior to a segment in \(P\), contradicting extremality.

(2) \(\Rightarrow\) (3). If \(S=\lbrace x\rbrace\) take \(\varphi=0\), the condition being vacuous.
Otherwise \(C=\conv(S\setminus\lbrace x\rbrace)\) is a non-empty compact convex set not containing
\(x\), so a hyperplane separates them strictly: there are \(\varphi\) and \(u\) with
\(\langle\varphi,y\rangle<u<\langle\varphi,x\rangle\) for all \(y\in C\), and
\(S\setminus\lbrace x\rbrace\subseteq C\).
\end{proof}

\begin{lemma}
Let \(P_1,\dots,P_r\subseteq\mathbb R^N\) be polytopes, \(P_p=\conv(V_p)\) with each \(V_p\)
finite and non-empty, let \(x_p\in P_p\) for each \(p\), and put \(x=\sum_px_p\).
\begin{enumerate}
\item \(x\) is a vertex of \(\sum_pP_p\) if and only if there is a \textbf{single} linear functional
\(\varphi\) for which each \(x_p\) is the \emph{unique} maximizer of \(\varphi\) on \(P_p\).
\item \emph{(Uniqueness of the decomposition.)} If \(x\) is a vertex of \(\sum_pP_p\) and also
\(x=\sum_px_p'\) with \(x_p'\in P_p\), then \(x_p'=x_p\) for every \(p\).
\end{enumerate}
\label{lem:vertexdecomp}
\label{lem-vertexdecomp}
\end{lemma}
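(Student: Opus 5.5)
The plan is to reduce both parts to Lemma \ref{lem-exposed-vertex}, using the fact that a linear functional is additive over a Minkowski sum. For any linear \(\varphi\) one has
\[
  \max_{\sum_pP_p}\varphi\;=\;\sum_p\max_{P_p}\varphi ,
\]
and a point \(y=\sum_py_p\) with \(y_p\in P_p\) attains the left-hand maximum exactly when every \(y_p\) attains the maximum on its own \(P_p\). I will prove this first. Each term satisfies \(\langle\varphi,y_p\rangle\le\max_{P_p}\varphi\), so the sum reaches the total bound only if every term reaches its own bound. This is the only computation in the argument.

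For (1), direction \(\Leftarrow\): suppose \(\varphi\) has unique maximizer \(x_p\) on each \(P_p\). Let \(y\in\sum_pP_p\) with \(\langle\varphi,y\rangle=\langle\varphi,x\rangle\), and decompose \(y=\sum_py_p\). By the observation above, each \(y_p\) maximizes \(\varphi\) on \(P_p\), so \(y_p=x_p\) and therefore \(y=x\). Thus \(\varphi\) exposes \(\{x\}\) in \(\sum_pP_p\), and an exposed point is a vertex.

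Direction \(\Rightarrow\): \(\sum_pP_p=\conv(V_1+\dots+V_r)\) by Lemma \ref{lem-convex}(A), and \(V_1+\dots+V_r\) is finite. So Lemma \ref{lem-exposed-vertex}, (1)\(\Rightarrow\)(3), gives a \(\varphi\) that takes a strictly larger value at \(x\) than at every other point of \(V_1+\dots+V_r\). By the proof of (3)\(\Rightarrow\)(1) there, \(\varphi\) then exposes \(\{x\}\) in the whole sum. Now suppose some \(x_q\) were not the unique maximizer of \(\varphi\) on \(P_q\). Then there is \(x_q'\in P_q\), \(x_q'\ne x_q\), with \(\langle\varphi,x_q'\rangle\ge\langle\varphi,x_q\rangle\). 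Replacing \(x_q\) by \(x_q'\) gives a point \(x'\ne x\) of the sum with \(\langle\varphi,x'\rangle\ge\langle\varphi,x\rangle\), which contradicts exposure. The points are distinct because only the \(q\)-th summand changed.

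For (2), I take the \(\varphi\) from (1). Since \(x=\sum_px_p'\) attains \(\max\varphi\) on the sum, the additivity observation says each \(x_p'\) maximizes \(\varphi\) on \(P_p\). Uniqueness of the maximizer then gives \(x_p'=x_p\).

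The main obstacle is small: in the \(\Rightarrow\) direction, the vertex must be upgraded to an exposed point of the whole sum, not just of its generating set. The finite generating set \(V_1+\dots+V_r\) together with Lemma \ref{lem-exposed-vertex} handles this.
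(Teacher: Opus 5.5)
Your proof is correct, and it runs in the opposite order from the paper's. The paper proves (2) first, by a functional-free midpoint argument. With \(y=x_q'+\sum_{p\ne q}x_p\) and \(z=x_q+\sum_{p\ne q}x_p'\) one has \(x=\tfrac12(y+z)\), and extremality forces \(y=z=x\). It then needs (2) inside the \(\Rightarrow\) direction of (1). The functional supplied by Lemma~\ref{lem-exposed-vertex} is only known to be strict on the finite set \(\sum_pV_p\), so the paper does the exchange there. That requires each \(x_p\) to lie in \(V_p\), which the paper gets by comparing \((x_p)\) with a decomposition \(x=\sum_pw_p\), \(w_p\in V_p\), via (2); only at the end does it pass strictness from \(V_q\) to \(P_q\).

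You instead push strictness from \(\sum_pV_p\) up to the whole hull \(\sum_pP_p\) at the start, using the argument inside (3)\(\Rightarrow\)(1). After that the exchange works with an arbitrary competitor \(x_q'\in P_q\), and the membership \(x_p\in V_p\) is never needed; it follows afterwards, since a unique maximizer on \(P_q\) is a vertex of \(P_q\). Part (2) then drops out of (1) by the termwise bound \(\langle\varphi,x_p'\rangle\le\langle\varphi,x_p\rangle\).

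Your route is the more economical one for (1). The paper's route buys a stronger (2): the midpoint argument uses nothing but extremality, so uniqueness of the decomposition holds for Minkowski sums of arbitrary convex sets. Your derivation of (2) goes through an exposing functional, so it is tied to the polytope setting, where every vertex is exposed. Your additivity of maxima is the support-function identity that Remark~\ref{rem-no-support-functions} says the paper avoids. Since you prove it in one line, nothing is actually imported, and for (2) you need only the termwise bound, not attainment of the maxima.
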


\begin{proof}
Note first that \(\sum_pP_p=\conv\bigl(\sum_pV_p\bigr)\) by Lemma \ref{lem-convex}(A), and that
\(\sum_pV_p\) is finite, so Lemma \ref{lem-exposed-vertex} applies to it. We prove (2) first, because
(1) uses it.

(2) Fix \(q\) and set
\begin{equation}\label{eq:vertexdecomp}
  y=x_q'+\sum_{p\ne q}x_p,\qquad z=x_q+\sum_{p\ne q}x_p' ,
\end{equation}
both of which lie in \(\sum_pP_p\). Their sum is \(\sum_p(x_p+x_p')=2x\), so
\(x=\tfrac12y+\tfrac12z\) exhibits \(x\) as the midpoint of a segment with endpoints in
\(\sum_pP_p\). As \(x\) is a vertex of \(\sum_pP_p\) it is not interior to a segment with
\emph{distinct} endpoints there, so \(y=z\); and \(y+z=2x\) then gives \(y=x\), that is
\(x_q'+\sum_{p\ne q}x_p=\sum_px_p\), whence \(x_q'=x_q\). Since \(q\) was arbitrary, the
decomposition is unique.

(1) (\(\Leftarrow\)) Suppose each \(x_p\) is the unique maximizer of \(\varphi\) on \(P_p\). Any point
of \(\sum_pP_p\) is \(\sum_py_p\) with \(y_p\in P_p\), and
\(\langle\varphi,\sum_py_p\rangle=\sum_p\langle\varphi,y_p\rangle\le\sum_p\langle\varphi,x_p\rangle\),
with equality only if \(y_p=x_p\) for every \(p\). So \(\varphi\) is uniquely maximized on the sum
at \(x\), which is therefore exposed, and hence a vertex of \(\sum_pP_p\).

(\(\Rightarrow\)) Let \(x\) be a vertex of \(\sum_pP_p\). By Lemma \ref{lem-exposed-vertex} there is a \(\varphi\) with
\(\langle\varphi,x\rangle>\langle\varphi,y\rangle\) for every \(y\in\sum_pV_p\) other than \(x\), and
moreover \(x\in\sum_pV_p\), say \(x=\sum_pw_p\) with \(w_p\in V_p\). By part (2) applied to the two
decompositions \((x_p)\) and \((w_p)\) we get \(x_p=w_p\), so \textbf{each \(x_p\) is itself a member of
\(V_p\)} --- which is what makes the following exchange available.

Fix \(q\) and suppose some \(v\in V_q\) with \(v\ne x_q\) had
\(\langle\varphi,v\rangle\ge\langle\varphi,x_q\rangle\). Exchanging it into the decomposition gives
\(y=v+\sum_{p\ne q}x_p\in\sum_pV_p\) with \(y\ne x\) and
\(\langle\varphi,y\rangle\ge\langle\varphi,x\rangle\), a contradiction. So \(x_q\) is the strict
maximizer of \(\varphi\) over the finite set \(V_q\), and hence over \(P_q=\conv(V_q)\), a maximum
over a hull being attained only at convex combinations of maximizers.
\end{proof}

\begin{remark}
Neither part uses support functions, and neither imports the fact that a vertex of a polytope is
exposed --- Lemma \ref{lem-exposed-vertex} supplies what is needed of it for hulls of finite sets, and
is proved here rather than cited. Both are usually how this lemma is proved --- the face of \(\sum P_p\) exposed by
\(\varphi\) is \(\sum_pF(\varphi,P_p)\), which follows from Ziegler~\cite[Proposition 7.12]{Ziegler1995},
the common-refinement description of the normal fan of a Minkowski sum --- and both
are avoidable here because every polytope in this paper comes to us as the convex hull of an
explicit finite set, which is exactly the hypothesis Lemma \ref{lem-exposed-vertex} wants. The
exchange argument in (1) and the midpoint argument in (2) are then elementary. We spell this out
because it removes the only citation Section \ref{sec-vertices} would otherwise need.
\label{rem-no-support-functions}
\end{remark}

Combining the two: each summand \(\shift^{d_p}(\rasimp(\vec a))\) is the convex hull of its \(n\)
explicit vertices, so Lemma \ref{lem-exposed-vertex} turns "unique maximizer on the summand" into a
comparison against those \(n\) points only, and Lemma \ref{lem-vertexdecomp}(1) assembles the summands.
Together they give: \(\extten(\vec a;\vec d)[j_1,\dots,j_r]=1\) if and only if there is a
functional \(\varphi\) with
\begin{equation}\label{eq:no-support-functions}
  \bigl\langle\varphi,\shift^{d_p}(v_{j_p})\bigr\rangle
  \;>\;\bigl\langle\varphi,\shift^{d_p}(v_{k})\bigr\rangle
  \qquad\text{for every }p\text{ and every }k\ne j_p .
\end{equation}
\textbf{This is a finite system of strict linear inequalities in \(\varphi\), and nothing else in
Section \ref{sec-vertices} or in this appendix uses convexity.} Every proof below either exhibits
such a \(\varphi\) or derives a
contradiction from one.
\subsection{Proofs for the three regimes}
\label{sec-app-regimes}
\begin{proof}[Proof of Proposition~\ref{prop:regimes-tie}]
Both summands are \(\rasimp(\vec a)\), and \(P+P=2P\) for a convex \(P\). For the extremal matrix,
a linear functional attains its maximum on \(\rasimp(\vec a)\) at a unique vertex \(v_i\) of
\(\rasimp(\vec a)\) or at
several; by Lemma \ref{lem-vertexdecomp}(1) the sum \(v_i+v_j\) is a vertex of \(\RA(\vec a;0,0)\)
exactly when some \(\varphi\)
has \(v_i\) as its unique maximizer \emph{and} \(v_j\) as its unique maximizer, which forces \(i=j\).
Conversely each \(2v_i\) is a vertex of \(\RA(\vec a;0,0)\), since a functional exposing \(v_i\) on
\(\rasimp(\vec a)\) exposes it on both summands at once.
\end{proof}

\begin{proof}[Proof of Proposition~\ref{prop:regimes-disjoint}]
The first summand is supported in coordinates \([1,n]\) and the second in \([d+1,d+n]\), which are
disjoint precisely when \(d\ge n\). Given \(i\) and \(j\), choose \(\varphi'\) exposing \(v_i\) on
\(\rasimp(\vec a)\) and \(\varphi''\) exposing \(v_j\) on \(\shift^d(\rasimp(\vec a))\), and let
\(\varphi\) agree with \(\varphi'\) on \([1,n]\) and with \(\varphi''\) on \([d+1,d+n]\); the two
prescriptions do not conflict. Then \(\varphi\) uniquely maximizes on each summand, and
Lemma \ref{lem-vertexdecomp}(1) applies.
\end{proof}

\begin{proof}[Proof of Proposition~\ref{prop:regimes-boundary}]
Exactly one coordinate is shared, namely \(c=n\). Every vertex \(v_k\) of \(\rasimp(\vec a)\) has
first coordinate \(a_1\), so \(\rasimp(\vec a)\subseteq\lbrace x_1=a_1\rbrace\); applying
\(\shift^{n-1}\), which carries coordinate \(1\) to coordinate \(n\), gives
\(\shift^{n-1}(\rasimp(\vec a))\subseteq\lbrace x_n=a_1\rbrace\). \textbf{It is the shifted summand, not
the unshifted one, that is constant on the shared coordinate.} A contribution that is the same at
every vertex of a summand cannot affect which of them maximizes a functional, so \(\varphi_n\) may be
prescribed freely by the first summand and the disjoint-support argument of
Proposition \ref{prop-regimes-disjoint} goes through verbatim. Subtracting the constant \(a_1e_n\) from
the second factor identifies the sum with the Cartesian product.
\end{proof}
\subsection{The all-ones extremal matrix}
\label{sec-app-ones}
\begin{proof}[Proof of Theorem~\ref{thm:ones}]
There are exactly two summands, and everything below is written out for each of them separately
rather than carried through an index.

One piece of notation. A \emph{functional} \(\varphi\) is an element of the dual of \(\mathbb R^N\);
\(\varphi_c\) denotes its \(c\)-th coordinate, so \(\langle\varphi,x\rangle=\sum_c\varphi_cx_c\)
is the ordinary pairing. Write \(\Phi(c)=\varphi_1+\dots+\varphi_c\) for its partial sums, and
\begin{equation}\label{eq:no-support-functions-2}
  W_1=[1,n],\qquad W_2=[d+1,d+n],\qquad m_1=i,\qquad m_2=d+j ,
\end{equation}
so \(W_1\) and \(m_1\) belong to the unshifted summand and \(W_2\), \(m_2\) to the shifted one.

For \(\vec a=\onesvec(n)\) and any \(\varphi\), the two summands give
\begin{equation}\label{eq:no-support-functions-3}
  \bigl\langle\varphi,v_k\bigr\rangle=\sum_{c=1}^{k}\varphi_c=\Phi(k)-\Phi(0),
  \qquad
  \bigl\langle\varphi,\shift^{d}(v_k)\bigr\rangle=\sum_{c=d+1}^{d+k}\varphi_c=\Phi(d+k)-\Phi(d),
\end{equation}
with \(\Phi(0)=0\); in each case the offset (\(\Phi(0)\) for the first summand, \(\Phi(d)\) for
the second) does not depend on \(k\). Since \(\varphi\mapsto\Phi\) is a bijection onto real
sequences, Lemma \ref{lem-vertexdecomp}(1) says \(\extten(\vec a;d)[i][j]=1\) exactly when some
sequence \(\Phi\) satisfies both
\begin{equation}\label{eq:no-support-functions-4}
  \Phi(m_1)>\Phi(x)\quad\text{for every }x\in W_1\setminus\lbrace m_1\rbrace ,
\end{equation}
\begin{equation}\label{eq:no-support-functions-5}
  \Phi(m_2)>\Phi(x)\quad\text{for every }x\in W_2\setminus\lbrace m_2\rbrace .
\end{equation}

We call the elements of a digraph its \textbf{nodes} throughout, keeping the word \emph{vertex} for polytopes
alone. A system of strict inequalities \(X(u)>X(v)\), one per edge of a finite digraph, is solvable if
and only if that digraph is acyclic: a cycle gives \(X(u)>\dots>X(u)\), and conversely a topological
order provides a solution. So \(\extten(\vec a;d)[i][j]=1\) if and only if the digraph \(D\) is acyclic, where
\(D\) carries an edge \(m_1\to x\) for every \(x\in W_1\setminus\lbrace m_1\rbrace\) and an edge
\(m_2\to x\) for every \(x\in W_2\setminus\lbrace m_2\rbrace\).

That makes \(D\) very restricted: only \(m_1\) and \(m_2\) have outgoing
edges. Every directed cycle contains a simple one, each of whose nodes needs an outgoing edge, so
a simple cycle can visit only \(m_1\) and \(m_2\); and \(D\) has no loops. Hence \(D\) has a cycle
precisely when
\begin{equation}\label{eq:no-support-functions-6}
  m_1\ne m_2,\qquad m_2\in W_1,\qquad m_1\in W_2 .
\end{equation}
Unwinding, with \(1\le i\le n\) and \(1\le j\le n\): the second condition is \(d+j\le n\), that is
\(j\le m\); the third is \(i\ge d+1\), that is \(t\ge1\), the upper bound \(i\le d+n\) being
automatic; and the first is \(i\ne d+j\), that is \(t\ne j\). Since \(t\le m\) and \(j\ge1\) always
hold, the set of pairs with a cycle is exactly \(\lbrace 1\le t\le m,\ 1\le j\le m,\ t\ne j\rbrace\).
The vertex count follows, the zero set being an \(m\times m\) block minus its diagonal, distinct
surviving pairs giving distinct vertices of \(\RA(\vec a;0,d)\) by Lemma \ref{lem-vertexdecomp}(2).
\end{proof}

The proof's own machinery reproduces the two excluded values, which is a check on the set-up rather
than a separate argument: \(d=0\) gives \(W_1=W_2\), so a cycle exists exactly when \(m_1\ne m_2\),
and the matrix is the identity; \(d\ge n\) gives disjoint windows, no cycle is possible, and the
matrix is all ones. These agree with Propositions \ref{prop-regimes-tie} and
\ref{prop-regimes-disjoint}, which prove them without the all-ones hypothesis.

\begin{remark}
At rank two the digraph is a convenience rather than a necessity. Since only \(m_1\) and \(m_2\)
carry outgoing edges, the whole of it can be replaced by a direct construction: if \(m_1=m_2\) take
\(\Phi\) to be the indicator of that common point; if \(m_1\ne m_2\) and \(m_2\notin W_1\) take
\(\Phi(m_2)=2\), \(\Phi(m_1)=1\) and \(\Phi=0\) elsewhere, and symmetrically if \(m_1\notin W_2\);
while in the remaining case the two inequalities \(\Phi(m_1)>\Phi(m_2)\) and \(\Phi(m_2)>\Phi(m_1)\)
are directly contradictory. That version of the argument uses nothing about the windows at all ---
not that they are intervals, not that they have the same length --- which the digraph formulation
obscures.

We keep the digraph because it is the formulation that survives to higher rank: at rank \(r\) the
cycles can be long, acyclicity is the right criterion, and the rank-two collapse above is exactly
the statement that every cycle is a \(2\)-cycle. The reader who only wants Theorem \ref{thm-ones}
may take the three cases instead.
\label{rem:digraph-collapses}
\label{rem-digraph-collapses}
\end{remark}

\begin{mexample}
Take \(n=4\), \(d=1\), so \(N=5\), \(W_1=\lbrace1,2,3,4\rbrace\), \(W_2=\lbrace2,3,4,5\rbrace\) and
\(m=3\). Fix \(j=1\) and vary \(i\).

For \(i=2\) we get \(t=1=j\), so \(m_1=2\) and \(m_2=d+j=2\) coincide. Only the node \(2\) has
outgoing edges, and it has no incoming edge, so \(D\) is acyclic and the vertex sum \(v_i+u_j\) is a
vertex of \(\RA(\vec a;0,d)\).
For \(i=3\) we get \(t=2\ne j\), so \(m_1=3\) and \(m_2=2\) are distinct, each lies in the other's
window, and the two edges \(3\to2\) and \(2\to3\) close a cycle: the vertex sum is not a vertex of
\(\RA(\vec a;0,d)\).

The two digraphs are shown in Figure \ref{fig:digraph}.
\label{ex-digraph}
\end{mexample}

\begin{figure}[H]
\centering
\begin{tikzpicture}[shorten >=1pt,node distance=13mm,
  every node/.style={circle,draw,minimum size=6mm,inner sep=0pt,font=\small}]
\begin{scope}
  \node (a1) at (0,0) {1}; \node (a2) at (1.3,0) {2}; \node (a3) at (2.6,0) {3};
  \node (a4) at (3.9,0) {4}; \node (a5) at (5.2,0) {5};
  \draw[->] (a2) to[bend right=25] (a1);
  \draw[->] (a2) to[bend left=25] (a3);
  \draw[->] (a2) to[bend left=32] (a4);
  \draw[->] (a2) to[bend left=38] (a5);
  \node[draw=none,shape=rectangle,inner sep=1pt,font=\footnotesize] at (2.6,-1.15) {$i=2$, $t=j=1$: $m_1=m_2=2$, acyclic \textit{(the sum is a vertex)}};
\end{scope}
\begin{scope}[yshift=-30mm]
  \node (b1) at (0,0) {1}; \node (b2) at (1.3,0) {2}; \node (b3) at (2.6,0) {3};
  \node (b4) at (3.9,0) {4}; \node (b5) at (5.2,0) {5};
  \draw[->] (b3) to[bend right=25] (b1);
  \draw[->,very thick] (b3) to[bend right=25] (b2);
  \draw[->,very thick] (b2) to[bend right=25] (b3);
  \draw[->] (b3) to[bend left=25] (b4);
  \draw[->] (b2) to[bend left=32] (b4);
  \draw[->] (b2) to[bend left=38] (b5);
  \node[draw=none,shape=rectangle,inner sep=1pt,font=\footnotesize] at (2.6,-1.15) {$i=3$, $t=2\ne j$: $m_1=3$, $m_2=2$, the $2$-cycle in bold \textit{(the sum is not a vertex)}};
\end{scope}
\end{tikzpicture}
\caption{Digraphs associated to inequality chains, their elements called \emph{nodes} to keep \emph{vertex} for polytopes. $D$ is acyclic exactly when the corresponding vertex sum is a vertex of $\RA(\vec a;0,1)$.}
\label{fig:digraph}
\end{figure}
\subsection{Notation for the Kozlov-vector case}
\label{sec-app-koznot}
The hypotheses and the three coordinate blocks are those fixed in Section \ref{sec-vertices}; what
follows is the additional notation used only in the proofs below.

For a functional \(\varphi\) we abbreviate its restriction to the shared block by
\(e_s=\varphi_{d+s}\) for \(1\le s\le m\), and set
\begin{equation}\label{eq:PQ}
  Q_{t'}=\sum_{s\le t'}e_sa_{d+s}\quad(0\le t'\le m),\qquad
  P_{j'}=\sum_{s\le j'}e_sa_{s}\quad(0\le j'\le m).
\end{equation}
\(Q\) is the shared-block contribution to the \textbf{first} summand, whose coordinate \(d+s\) carries the
value \(a_{d+s}\); \(P\) is the shared-block contribution to the \textbf{second}, whose coordinate \(d+s\)
carries \(a_s\). Finally
\begin{equation}\label{eq:digraph}
  b_s=a_s,\qquad c_s=a_{d+s},\qquad \rho_s=c_s/b_s=a_{d+s}/a_s\qquad(1\le s\le m),
\end{equation}
so that \(\rho_s\) is the ratio by which the two summands weight the same shared coordinate. All of
\(P\), \(Q\), \(b\), \(c\) and \(\rho\) are used in the proofs below and are defined here once, so
that no proof depends on notation introduced after it.

\begin{lemma}
Let \(n\ge1\), \(d\ge1\), \(m=n-d\), and let \(\vec a=\kozvec(n)\) be the Kozlov vector, so that
\(\rho_j=\binom n{d+j}\big/\binom nj\) for \(1\le j\le n\), with the convention
\(\binom ns=0\) for \(s>n\). Then
\begin{enumerate}
\item \(\rho_j>0\) if and only if \(j\le m\), and \(\rho_j=0\) for \(m<j\le n\);
\item \(\rho_j>\rho_{j+1}\) for every \(1\le j\le m\).
\end{enumerate}
In particular \(\rho_1>\rho_2>\cdots>\rho_m>\rho_{m+1}=0\), so \(\rho\) is strictly decreasing
exactly on \(1,\dots,m+1\) and constant thereafter.
\label{lem:binomial-ratio}
\label{lem-binomial-ratio}
\end{lemma}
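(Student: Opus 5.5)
The plan is to work directly with the closed form \(\rho_j=\binom n{d+j}\big/\binom nj\) and to reduce both parts to elementary facts about binomial coefficients, using the convention \(\binom ns=0\) for \(s>n\) throughout.

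For part (1), the denominator \(\binom nj\) is positive for every \(1\le j\le n\), so the sign of \(\rho_j\) is that of \(\binom n{d+j}\). This numerator is positive exactly when \(d+j\le n\), that is, when \(j\le m\). For \(m<j\le n\) we have \(d+j>n\), so the numerator vanishes by the convention and \(\rho_j=0\). This is immediate, and it also gives \(\rho_{m+1}=0\) whenever \(m+1\le n\), which holds because \(d\ge1\).

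For part (2) I would split into two cases.

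\textbf{Case \(j=m\).} Here \(\rho_m=\binom nn/\binom nm=1/\binom nm>0=\rho_{m+1}\) by part (1), so the inequality is strict.

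\textbf{Case \(1\le j\le m-1\).} Both ratios are positive, so I would compute the quotient \(\rho_{j+1}/\rho_j\) using \(\binom n{s+1}=\binom ns\frac{n-s}{s+1}\):
\[
\frac{\rho_{j+1}}{\rho_j}=\frac{\binom n{d+j+1}}{\binom n{d+j}}\cdot\frac{\binom nj}{\binom n{j+1}}
=\frac{n-d-j}{d+j+1}\cdot\frac{j+1}{n-j}.
\]
It then remains to show that this quotient is strictly less than \(1\). Since \(d\ge1\), we have \(n-d-j<n-j\) and \(j+1<d+j+1\), and all four factors are positive because \(j\le m-1\) gives \(n-d-j\ge1\). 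Each factor of the product is therefore strictly less than \(1\), and so is the product.

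The "in particular" statement follows by chaining the two parts. Part (2) gives the strict decrease \(\rho_1>\cdots>\rho_m>\rho_{m+1}\), part (1) identifies \(\rho_{m+1}\) as \(0\), and part (1) also gives \(\rho_j=0\) for all \(m<j\le n\), which is the "constant thereafter" clause.

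No step is a real obstacle. The one point needing care is the boundary index \(j=m\), where the ratio formula would divide by a vanishing \(\binom n{d+j}\); this is why I treat it separately via part (1) rather than through the quotient. I would also note the degenerate case \(m\le0\), where part (2) is vacuous and part (1) says every \(\rho_j\) vanishes.
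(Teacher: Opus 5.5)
Your proof is correct and follows essentially the paper's route. Both use $\binom{n}{s+1}=\binom{n}{s}\frac{n-s}{s+1}$ to reduce part (2) to $(n-d-j)(j+1)<(d+j+1)(n-j)$; the paper settles this with the identity $(d+j+1)(n-j)-(n-d-j)(j+1)=d(n+1)>0$, and you settle it by comparing positive factors.

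Two small corrections. First, the factors as you display them need not each be below $1$: for $n=10$ and $d=j=1$ one has $\frac{n-d-j}{d+j+1}=\frac{8}{3}$. Pair them instead as $\frac{n-d-j}{n-j}\cdot\frac{j+1}{d+j+1}$, which is exactly what your two inequalities show. Second, at $j=m$ no denominator vanishes, since $\binom{n}{d+m}=\binom{n}{n}=1$, and the quotient formula simply returns $0$ there. Your separate case for $j=m$ is therefore harmless but not needed, as the paper's uniform treatment of $1\le j\le m$ shows.
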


\begin{proof}
(1) is immediate: \(\binom n{d+j}>0\) exactly when \(d+j\le n\), that is \(j\le m\), and
\(\binom nj>0\) throughout \(1\le j\le n\) since \(j\le n\).

(2) Fix \(1\le j\le m\), so \(d+j\le n\) and hence \(\rho_j>0\); also \(j+1\le m+1\le n\) because
\(d\ge1\), so \(\rho_{j+1}\) is defined. Using \(\binom n{s+1}=\binom ns(n-s)/(s+1)\) twice,
\(\rho_{j+1}<\rho_j\) is equivalent to \((n-d-j)(j+1)<(d+j+1)(n-j)\), and
\begin{equation}\label{eq:binomial-ratio}
  (d+j+1)(n-j)-(n-d-j)(j+1)\;=\;d(n+1)\;>\;0 .
\end{equation}
Both cross-multiplications are by the positive quantities \(\binom nj\) and \(\binom n{j+1}\).
\end{proof}

The restriction \(j\le m\) in part (2) is not cosmetic: for \(j>m\) both \(\rho_j\) and
\(\rho_{j+1}\) vanish and the decrease is no longer strict. Every use below stays in the range
\(1\le j\le m\).
\subsection{The six cases}
\label{sec-app-cases}
\textbf{The short cases.} Five of the six regions are settled by short arguments, given here as separate
propositions so that each carries its own hypotheses and each says what it needs of \(\vec a\).

Two remarks before the first of them, both about which coordinates each summand can see. The
vertices \(v_{i'}\) of the first summand are supported in \([1,n]\) and never meet \([n+1,N]\); the
vertices \(u_{j'}\) of the second are supported in \([d+1,N]\) and never meet \([1,d]\). So
\(\varphi\) restricted to \([1,d]\) is invisible to the second summand and \(\varphi\) restricted to
\([n+1,N]\) is invisible to the first, and it is only on the shared block that a choice has to serve
both. Second, it is convenient to record the value of a vertex directly, for \(1\le i',j'\le n\):
\begin{align}
  \langle\varphi,v_{i'}\rangle &= \sum_{k\le i'}\varphi_ka_k, \label{eq:vertval-first}\\
  \langle\varphi,u_{j'}\rangle &= \sum_{s\le j'}\varphi_{d+s}a_s, \label{eq:vertval-second}
\end{align}
\eqref{eq:vertval-second} because \(u_{j'}\) carries the value \(a_s\) at coordinate \(d+s\). For
\(i'=d+t'\), \eqref{eq:vertval-first} reads \(\sum_{k\le d}\varphi_ka_k+Q_{t'}\), and for
\(j'\le m\), \eqref{eq:vertval-second} is \(P_{j'}\).

\begin{lemma}
(\emph{Free shaping of one summand.}) Let \(w_1,\dots,w_n\) be positive reals, let \(1\le p\le n\), and
let \(\psi_1\in\mathbb R\) be prescribed arbitrarily. Put \(\psi_s=+1\) for \(2\le s\le p\) and
\(\psi_s=-1\) for \(p<s\le n\), and \(\Sigma_{p'}=\sum_{s\le p'}\psi_sw_s\). Then \(\Sigma_p>\Sigma_{p'}\) for
every \(p'\in[1,n]\) with \(p'\ne p\).
\label{lem-shape}
\end{lemma}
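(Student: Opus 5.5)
The partial sums \(\Sigma_{p'}\) can be computed directly as a function of \(p'\). Once written out, they are strictly increasing up to \(p\) and strictly decreasing after it. The plan is to use the signs of the \(\psi_s\) together with positivity of the \(w_s\), and to compare \(\Sigma_p\) with \(\Sigma_{p'}\) in two cases according to whether \(p'<p\) or \(p'>p\). The prescribed value \(\psi_1\) only enters through the term \(\psi_1w_1\), which is common to every \(\Sigma_{p'}\) with \(p'\ge1\). Since the statement only compares indices \(p'\in[1,n]\), that term cancels from every difference. This is exactly why \(\psi_1\) may be left free for the other summand to use.

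First, for \(p'<p\), I write
\begin{equation}
  \Sigma_p-\Sigma_{p'}\;=\;\sum_{s=p'+1}^{p}\psi_sw_s\;=\;\sum_{s=p'+1}^{p}w_s ,
\end{equation}
since every index \(s\) in this range satisfies \(2\le s\le p\) (because \(p'\ge1\)), so \(\psi_s=+1\). The sum is non-empty and every term is positive, so the difference is strictly positive.

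Second, for \(p'>p\), I write
\begin{equation}
  \Sigma_p-\Sigma_{p'}\;=\;-\sum_{s=p+1}^{p'}\psi_sw_s\;=\;\sum_{s=p+1}^{p'}w_s\;>\;0 ,
\end{equation}
since every \(s\) in this range has \(p<s\le n\), so \(\psi_s=-1\).

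There is no real obstacle. The only points needing care are the edge cases \(p=1\) and \(p=n\). For \(p=1\) the first case is vacuous and the \(+1\) block is empty. For \(p=n\) the second case is vacuous. The other point is to check that the lower bound \(p'\ge1\) is what keeps \(\psi_1\) out of every difference.
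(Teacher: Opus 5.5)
Your proof is correct and is essentially the paper's argument. The paper notes that \(\psi_1w_1\) is common to every \(\Sigma_{p'}\) with \(p'\ge1\), and that the increments \(\psi_sw_s\) are positive for \(2\le s\le p\) and negative for \(s>p\), so the partial sums strictly increase on \([1,p]\) and strictly decrease on \([p,n]\); your telescoped differences in the cases \(p'<p\) and \(p'>p\) make exactly this explicit.
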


\begin{proof}
The term \(\psi_1w_1\) occurs in every \(\Sigma_{p'}\) with \(p'\ge1\) and so cannot affect which of
them is largest; that is the only role of the hypothesis that \(\psi_1\) is prescribed. Above index
\(1\) the increments \(\psi_sw_s\) are \(+w_s>0\) for \(s\le p\) and \(-w_s<0\) for \(s>p\), so
\(\Sigma_{p'}\) is strictly increasing on \([1,p]\) and strictly decreasing on \([p,n]\).
\end{proof}

The entries outside the overlap block split into two propositions rather than one, because the two
cases are proved differently: for \(i\le d\) it is the \textbf{first} summand that must be pinned using a
coordinate the second summand also sees, and for \(j>m\) it is the second summand that has private
room to spare.

\begin{proposition}[]
\regionbox{\regones{(0,0.7) rectangle (1,1)}}(\emph{Rows that stop inside the first private block.}) Let \(\vec a\) be any positive leg vector and let
\(1\le i\le d\), so that \(t\le0\). Then \(\extten(\vec a;d)[i][j]=1\) for every \(1\le j\le n\).
\label{prop:row-below}
\label{prop-row-below}
\end{proposition}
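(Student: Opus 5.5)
We will use the criterion \eqref{eq:no-support-functions}: it suffices to build one functional \(\varphi\) on \(\mathbb R^N\) for which \(v_i\) is the strict maximizer over \(v_1,\dots,v_n\) and \(u_j\) is the strict maximizer over \(u_1,\dots,u_n\). The plan is to let the second summand dictate \(\varphi\) on the shared and second private blocks, and then to fix the first summand using only coordinates \(1,\dots,d\). The first summand is the one that must be pinned, because its maximizer index \(i\le d\) lies in the private block \([1,d]\). However, the competitors \(v_{i'}\) with \(i'>d\) also pick up shared coordinates, whose weights the second summand has already chosen.

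\textbf{Step 1 (second summand).} By \eqref{eq:vertval-second}, \(\langle\varphi,u_{j'}\rangle=\sum_{s\le j'}\varphi_{d+s}a_s\). We apply Lemma \ref{lem-shape} with weights \(w_s=a_s\), peak \(p=j\), and \(\psi_s=\varphi_{d+s}\). This fixes \(\varphi_{d+s}=+1\) for \(2\le s\le j\) and \(\varphi_{d+s}=-1\) for \(j<s\le n\), and leaves \(\varphi_{d+1}\) free. It then makes \(u_j\) the strict maximizer, whatever values \(\varphi_1,\dots,\varphi_d\) take, since those coordinates are invisible to the second summand. To keep the first summand's tail under control, we will use the freedom in \(\varphi_{d+1}\) and set it to a large negative number \(-L\).

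\textbf{Step 2 (first summand).} We put \(\varphi_k=+1\) for \(2\le k\le i\) (so \(\varphi_1\) is arbitrary, say \(0\)), and \(\varphi_k=-1\) for \(i<k\le d\). For \(i'\le d\), Lemma \ref{lem-shape} applied to the first \(d\) coordinates gives \(\langle\varphi,v_i\rangle>\langle\varphi,v_{i'}\rangle\) when \(i'\ne i\). For \(i'=d+t'\) with \(t'\ge1\), we have \(\langle\varphi,v_{i'}\rangle=\langle\varphi,v_d\rangle+Q_{t'}\). Here \(\langle\varphi,v_d\rangle\le\langle\varphi,v_i\rangle\), and
\[Q_{t'}=-La_{d+1}+\sum_{2\le s\le t'}\varphi_{d+s}a_{d+s}\le -La_{d+1}+\sum_{s\le n}a_s.\]
Choosing \(L>\sum_sa_s/a_{d+1}\) makes \(Q_{t'}<0\) for every \(t'\ge1\). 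Here we need \(a_{d+1}>0\), which holds because \(d+1\le n\) in this regime (\(d\le n-1\)). This handles every competitor.

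\textbf{Main obstacle.} The delicate point is the case \(j=1\). There the peak in Lemma \ref{lem-shape} sits at \(p=1\), so the first coordinate \(\psi_1=\varphi_{d+1}\) really is free, and our choice \(-L\) costs nothing. In general the only genuine concern is that the value at \(d+1\) must serve both summands at once. Lemma \ref{lem-shape} is designed precisely so that the second summand never cares about \(\psi_1\), so it suffices to check that step. Once it holds, Lemma \ref{lem-vertexdecomp}(1) gives \(\extten(\vec a;d)[i][j]=1\). No property of \(\vec a\) beyond positivity is used.
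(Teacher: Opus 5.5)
Your proof is correct and is essentially the paper's. It uses the same functional: the second summand is shaped by Lemma \ref{lem-shape} around a prescribed large negative value $\varphi_{d+1}=-L$, and that value pushes every $Q_{t'}$ with $t'\ge1$ below zero; the only differences are the harmless choice $\varphi_1=0$ and your explicit bound $L>\sum_s a_s/a_{d+1}$. One correction to your commentary: the case that actually forces this device is $i=d$, where $\langle\varphi,v_d\rangle=\langle\varphi,v_i\rangle$ and only $Q_{t'}<0$ secures strictness, not $j=1$, since Lemma \ref{lem-shape} makes $\psi_1$ irrelevant for every $j$ alike.
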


\begin{proof}
Let \(M>0\) and define \(\varphi\) by
\begin{equation}\label{eq:row-below}
  \varphi_k=+1\ (k\le i),\qquad \varphi_k=-1\ (i<k\le d),\qquad \varphi_{d+1}=-M,
\end{equation}
with \(\varphi_{d+2},\dots,\varphi_{d+n}\) given by Lemma \ref{lem-shape} applied with \(w_s=a_s\),
\(p=j\) and the prescribed value \(\psi_1=-M\). By that lemma \(u_j\) is the unique maximizer on the
second summand, \textbf{whatever \(M\) is} --- this is the point of prescribing \(\psi_1\) rather than
choosing it.

For the first summand, \(\langle\varphi,v_{i'}\rangle=\sum_{k\le i'}\varphi_ka_k\) increases strictly
up to \(i'=i\) and decreases strictly on \(i\le i'\le d\), so \(v_i\) beats every \(v_{i'}\) with
\(i'\le d\), and \(\langle\varphi,v_d\rangle\le\langle\varphi,v_i\rangle\). For \(i'>d\),
\(\langle\varphi,v_{i'}\rangle=\langle\varphi,v_d\rangle+Q_{i'-d}\) with
\begin{equation}\label{eq:row-below-2}
  Q_{t'}\;=\;-Ma_{d+1}+\sum_{s=2}^{t'}\varphi_{d+s}a_{d+s}\qquad(t'\ge1),
\end{equation}
whose second term does not involve \(M\) and takes finitely many values. Choosing \(M\) larger than
\(\max_{1\le t'\le m}\bigl(\sum_{s=2}^{t'}\varphi_{d+s}a_{d+s}\bigr)\big/a_{d+1}\) makes \(Q_{t'}<0\)
for every \(t'\ge1\), whence
\(\langle\varphi,v_{i'}\rangle<\langle\varphi,v_d\rangle\le\langle\varphi,v_i\rangle\). Apply
Lemma \ref{lem-vertexdecomp}(1).
\end{proof}

\begin{remark}
The case \(i=d\) is the one that forces this argument. For \(i<d\) the coordinates \(i+1,\dots,d\)
give the first summand a private penalty which can be made as large as one likes, and any choice at
all on the shared block is then harmless; at \(i=d\) that penalty is an empty sum, and the first
summand has to be pinned using a coordinate the second summand also sees. The device is to spend the
\textbf{first} shared coordinate on it: \(-Ma_{d+1}\) appears in every \(Q_{t'}\) with \(t'\ge1\) and so
pushes all of them below zero, while \(-Ma_1\) appears in every \(\langle\varphi,u_{j'}\rangle\) with
\(j'\ge1\) and so disturbs nothing on the second summand.
\label{rem-row-below}
\end{remark}

\begin{proposition}[]
\regionbox{\regones{(0.7,0) rectangle (1,1)}}(\emph{Columns that reach the second private block.}) Let \(\vec a\) be any positive leg vector and let
\(m<j\le n\). Then \(\extten(\vec a;d)[i][j]=1\) for every \(1\le i\le n\).
\label{prop:col-beyond}
\label{prop-col-beyond}
\end{proposition}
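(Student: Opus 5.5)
We construct a single functional \(\varphi\) and apply Lemma \ref{lem-vertexdecomp}(1), in the form \eqref{eq:no-support-functions}. This mirrors the proof of Proposition \ref{prop-row-below}, with the roles of the two summands exchanged. The hypothesis \(j>m\) says that \(u_j\) reaches into the second private block \([n+1,N]\), since its support \([d+1,d+j]\) contains \(d+j>n\). Coordinates in that block are invisible to the first summand.

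The plan is as follows.

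\textbf{Shared block and first summand.} On \([1,n]\) we let \(\varphi\) be the functional of Lemma \ref{lem-shape} for the first summand. Concretely, take \(w_k=a_k\) and \(p=i\), set \(\varphi_k=+1\) for \(2\le k\le i\) and \(\varphi_k=-1\) for \(i<k\le n\), with \(\varphi_1\) arbitrary (say \(+1\)). By that lemma, together with \eqref{eq:vertval-first}, \(v_i\) is the unique maximizer on the first summand. Coordinates beyond \(n\) do not affect this, since the \(v_{i'}\) are supported in \([1,n]\).

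\textbf{Private block and second summand.} The values \(\varphi_{d+1},\dots,\varphi_n\), which are the shared block \(e_1,\dots,e_m\), are now fixed. By \eqref{eq:vertval-second}, \(\langle\varphi,u_{j'}\rangle=P_{j'}\) for \(j'\le m\), and these are fixed numbers. The remaining freedom lies in \(\varphi_{n+1},\dots,\varphi_N\), which is the second summand's coordinates \(d+s\) with \(s=m+1,\dots,n\). For the second summand we want increments \(\varphi_{d+s}a_s\) that are large positive for \(m<s\le j\) and negative for \(s>j\). So we set \(\varphi_{d+s}=+M\) for \(m+1\le s\le j\) and \(\varphi_{d+s}=-1\) for \(s>j\). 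Then the partial sums \(\langle\varphi,u_{j'}\rangle\) strictly increase on \([m,j]\) and strictly decrease on \([j,n]\). Hence \(u_j\) beats every \(u_{j'}\) with \(j'\ge m\), and the same holds with \(j'\) ranging over \(j'>j\) and \(m\le j'<j\).

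The remaining comparison is against \(u_{j'}\) with \(j'<m\), whose values \(P_{j'}\) do not depend on \(M\). Since \(j\ge m+1\), we have \(\langle\varphi,u_j\rangle\ge P_m+Ma_{m+1}\), which grows without bound in \(M\). Choosing \(M>\max_{j'\le m}(P_{j'}-P_m)/a_{m+1}\) settles these cases.

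This step is the main obstacle, though only a mild one. It amounts to checking that one large weight on the first private coordinate \(n+1\) dominates every finite, already-fixed shared-block value. It is the mirror image of the \(-M\) device of Remark \ref{rem-row-below}, and it works precisely because \(j>m\) guarantees that coordinate \(n+1\) lies in \(u_j\)'s support.

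The case \(m=0\) is excluded by the standing hypothesis \(d\le n-1\), so \(a_{m+1}\) is defined and positive. No property of \(\vec a\) beyond positivity is used. Lemma \ref{lem-vertexdecomp}(1) then gives \(\extten(\vec a;d)[i][j]=1\).
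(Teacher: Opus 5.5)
Your proof is correct and follows the paper's own argument. You pin the first summand on \([1,n]\) by Lemma~\ref{lem-shape} with \(p=i\), then put a large positive weight on the private coordinates \(n+1,\dots,d+j\), which makes \(u_j\) the unique maximizer on the second summand. The only difference is cosmetic: the paper puts \(-\Gamma\) rather than \(-1\) on the tail and lets one largeness condition handle every competitor, whereas you use signs alone on \(m\le j'\le n\) and need largeness only against \(j'<m\).
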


\begin{proof}
Here the second summand has room of its own: \(u_j\) meets \([n+1,N]\) precisely because \(j>m\).
Choose \(\varphi\) on \([1,n]\) by Lemma \ref{lem-shape} with \(w_k=a_k\), \(p=i\) and \(\psi_1=+1\),
which makes \(v_i\) the unique maximizer on the first summand; note this uses only coordinates the
first summand sees, and fixes \(\varphi\) on the shared block as a side effect. On the remaining
coordinates put \(\varphi_{d+s}=+\Gamma\) for \(m<s\le j\) and \(\varphi_{d+s}=-\Gamma\) for
\(j<s\le n\), with \(\Gamma>0\); these are the coordinates \(n+1,\dots,N\), invisible to the first
summand, so the first summand is unaffected. Then
\begin{equation}\label{eq:col-beyond}
  \langle\varphi,u_{j'}\rangle=P_{\min(j',m)}+\Gamma\Bigl(\sum_{s=m+1}^{\min(j',j)}a_s
    -\sum_{s=j+1}^{j'}a_s\Bigr),
\end{equation}
where the last sum is empty unless \(j'>j\). The quantities \(P_{j'}\) do not involve \(\Gamma\),
while the coefficient of \(\Gamma\) is strictly largest at \(j'=j\): it is
\(\sum_{s=m+1}^{j}a_s>0\) there, strictly smaller for \(j'<j\) and strictly smaller again for
\(j'>j\), by positivity of \(\vec a\). So for \(\Gamma\) large enough \(u_j\) is the unique
maximizer, and Lemma \ref{lem-vertexdecomp}(1) applies.
\end{proof}

\begin{proposition}[]
\regionbox{\regdiag \regones{(0,0.7)--(0.09,0.7)--(0.7,0.09)--(0.7,0)--(0.61,0)--(0,0.61)--cycle}}(\emph{The diagonal.}) Let \(\vec a\) be any positive leg vector and let \(1\le t\le m\). Then
\(\extten(\vec a;d)[d+t][t]=1\); that is, \(\extten(\vec a;d)[i][j]=1\) for \(i=d+t\) and
\(j=t\).
\label{prop:diagonal}
\label{prop-diagonal}
\end{proposition}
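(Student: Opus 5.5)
The plan is to exhibit a single functional directly and apply Lemma \ref{lem-vertexdecomp}(1), in the finite form \eqref{eq:no-support-functions}. The diagonal is the one pair in which both summands' vertices stop at the \emph{same} coordinate. The vertex \(v_{d+t}\) of the first summand has its last non-zero entry at coordinate \(d+t\). The vertex \(u_t=\shift^d(v_t)\) of the second summand also has its last non-zero entry at coordinate \(d+t\), since \(t\le m\) keeps this inside the shared block. So the shared block does not have to serve two masters: one sign pattern, switching at \(d+t\), should suit both summands at once.

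Concretely, I will define \(\varphi\) on all of \([1,N]\) by
\begin{equation*}
  \varphi_k=+1\quad(k\le d+t),\qquad \varphi_k=-1\quad(d+t<k\le N).
\end{equation*}
For the first summand, \eqref{eq:vertval-first} gives \(\langle\varphi,v_{i'}\rangle=\sum_{k\le i'}\varphi_ka_k\). Its increments are \(+a_k>0\) for \(k\le d+t\) and \(-a_k<0\) for \(d+t<k\le n\). Hence the sequence is strictly increasing on \([1,d+t]\) and strictly decreasing on \([d+t,n]\), so \(v_{d+t}\) is its unique maximizer. This is exactly the unimodality argument of Lemma \ref{lem-shape}, with \(p=d+t\) and \(\psi_1=+1\).

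For the second summand, \eqref{eq:vertval-second} gives \(\langle\varphi,u_{j'}\rangle=\sum_{s\le j'}\varphi_{d+s}a_s\). By the choice of \(\varphi\), we have \(\varphi_{d+s}=+1\) for \(s\le t\) and \(\varphi_{d+s}=-1\) for \(t<s\le n\). The same unimodality therefore makes \(u_t\) the unique maximizer. This is Lemma \ref{lem-shape} again, now with \(w_s=a_s\), \(p=t\) and \(\psi_1=+1\).

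Both strictness claims use only positivity of \(\vec a\), which matches the hypothesis ``any positive leg vector''. I should check the edge cases of the index ranges. At \(t=m\) the switch point is \(d+t=n\), so the first summand's decreasing range is empty and \(v_n\) is simply the endpoint maximum; at \(t=1\) the second summand's increasing range is trivial. Neither causes trouble. I do not expect a real obstacle here. The only point needing care is confirming that the first summand never sees coordinates beyond \(n\) and the second never sees coordinates below \(d+1\), so that one global sign change at \(d+t\) restricts correctly to each window. That is exactly the support remark made before Lemma \ref{lem-shape}.
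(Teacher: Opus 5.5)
Your proposal is correct and is the paper's own proof: the paper uses the same threshold functional $\varphi_c=\pm1$ with the sign change after $c=d+t$. It shows by the same unimodality argument that this functional exposes $v_{d+t}$ on the first summand and $u_t$ on the second, and then applies Lemma~\ref{lem-vertexdecomp}(1).
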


\begin{proof}
Take the single \textbf{threshold functional} with cut at \(p=d+t\),
\begin{equation}\label{eq:diagonal}
  \varphi_c=+1\ (c\le d+t),\qquad \varphi_c=-1\ (c>d+t)\qquad(1\le c\le N),
\end{equation}
which is \(+1\) on \([1,d]\), \(+1\) on the shared block up to \(s=t\), \(-1\) after that, and
\(-1\) on \([n+1,N]\), the three blocks of the standing notation written as one formula.

One computation now serves both summands. The value of \(\varphi\) at consecutive vertices of the
summand shifted by \(d'\) differs by \(a_{k+1}\varphi_{d'+k+1}\), a positive multiple of a single
coefficient; so that value strictly increases while \(d'+k+1\le d+t\) and strictly decreases after,
and is therefore uniquely maximized at the vertex index \(q\) with \(d'+q=d+t\). Applying this
with \(d'=0\), \(q=d+t\) and with \(d'=d\), \(q=t\), the same threshold \(p=d+t\) both times,
exposes \(v_{d+t}\) and \(u_t\) simultaneously. Apply Lemma \ref{lem-vertexdecomp}(1).
\end{proof}

\begin{proposition}[]
\regionbox{\regdiag \regzeros{(0.09,0.7)--(0.21,0.7)--(0.7,0.21)--(0.7,0.09)--cycle} \regzeros{(0,0.61)--(0,0.49)--(0.49,0)--(0.61,0)--cycle}}(\emph{The two adjacent diagonals.}) Let \(\vec a\) be any positive leg vector, and let \(1\le t\le m\)
and \(1\le j\le m\) with \(j=t+1\) or \(j=t-1\). Then \(\extten(\vec a;d)[i][j]=0\) for
\(i=d+t\).
\label{prop:adjacent}
\label{prop-adjacent}
\end{proposition}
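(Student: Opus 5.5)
The plan is to argue by contradiction from the criterion \eqref{eq:no-support-functions}, that is, from Lemma \ref{lem-vertexdecomp}(1) together with Lemma \ref{lem-exposed-vertex}. Suppose some functional \(\varphi\) makes \(v_{d+t}\) the unique maximizer on the first summand and \(u_j\) the unique maximizer on the second. Strict maximality of a vertex only has to be tested against its two neighbours in the staircase order. The values \eqref{eq:vertval-first} and \eqref{eq:vertval-second} change between consecutive vertices by a single term, \(\varphi_{k}a_{k}\) on the first summand and \(\varphi_{d+s}a_s\) on the second. Since \(\vec a\) is positive, each neighbour comparison therefore pins down the sign of a single coordinate of \(\varphi\). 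Both adjacent diagonals should fall to this sign bookkeeping, with no use of \(\vec a\) beyond positivity.

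For \(j=t+1\) I will compare each maximizer with one neighbour. Here \(j\le m\) forces \(t\le m-1\), so \(d+t+1\le n\) and \(v_{d+t+1}\) exists. Comparing \(v_{d+t}\) with \(v_{d+t+1}\) gives \(\varphi_{d+t+1}a_{d+t+1}<0\), hence \(\varphi_{d+t+1}<0\). Since \(j=t+1\ge2\), the vertex \(u_{j-1}\) exists. Comparing \(u_j\) with \(u_{j-1}\) gives \(\varphi_{d+j}a_j>0\), that is \(\varphi_{d+t+1}>0\). The two signs contradict each other.

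For \(j=t-1\) the comparisons go the other way. Here \(j\ge1\) forces \(t\ge2\), and \(d+t-1\ge1\), so \(v_{d+t-1}\) exists. Comparing \(v_{d+t}\) with \(v_{d+t-1}\) gives \(\varphi_{d+t}a_{d+t}>0\), hence \(\varphi_{d+t}>0\). Since \(j+1=t\le m\le n\), the vertex \(u_{j+1}\) exists. Comparing \(u_j\) with \(u_{j+1}\) gives \(\varphi_{d+j+1}a_{j+1}<0\), that is \(\varphi_{d+t}<0\). Again the signs contradict each other.

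So in both cases no \(\varphi\) exists, and \(\extten(\vec a;d)[d+t][j]=0\).

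The only step needing care is the index bookkeeping. I must check that each neighbour invoked really is a vertex of its summand, and the hypotheses \(1\le t,j\le m\) are exactly what guarantee this. I must also check that the shared coordinate \(d+t+1\) (respectively \(d+t\)) is seen by both summands, which holds because it lies in the shared block \([d+1,n]\).
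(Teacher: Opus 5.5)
Your proof is correct and is essentially the paper's argument: at the relevant step both summands grow through the same shared coordinate (\(d+t+1\) when \(j=t+1\), and \(d+t\) when \(j=t-1\)), and exposing \(v_{d+t}\) and \(u_j\) simultaneously forces opposite signs on that one coefficient of \(\varphi\). One small slip is harmless: your claim that strict maximality \emph{only has to be tested against its two neighbours} is false as a sufficiency statement, since the partial sums along the staircase need not be unimodal, but your argument uses only the necessary direction, so nothing is affected.
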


\begin{proof}
The point is that \textbf{both summands cross the same coordinate}, and need its coefficient to have
opposite signs; no partial sums are involved.

Suppose \(j=t+1\le m\) and the entry were one, witnessed by \(\varphi\). Passing from \(u_t\) to
\(u_{t+1}\) on the second summand adds the single term \(a_{t+1}\varphi_{d+t+1}\), and exposing
\(u_{t+1}\) requires that to be positive, so \(\varphi_{d+t+1}>0\). Passing from \(v_{d+t}\) to
\(v_{d+t+1}\) on the first summand adds \(a_{d+t+1}\varphi_{d+t+1}\), \textbf{the same coefficient},
because coordinate \(d+t+1\) is where both summands grow at this step, and exposing \(v_{d+t}\)
requires that to be negative, so \(\varphi_{d+t+1}<0\). This is a contradiction. Both competitor
vertices exist: \(u_t\) because \(t\ge1\), and \(v_{d+t+1}\) because \(j=t+1\le m\) gives
\(d+t+1\le n\), which is precisely what the hypothesis \(j\le m\) buys.

The case \(j=t-1\ge1\) is the mirror image, across the coordinate \(d+t\) instead.
\end{proof}

\textbf{The hypothesis \(1\le j\le m\) is what makes this true}. If \(j=t+1\) exceeds \(m\) then the coordinate \(d+j\) lies outside the
shared block, \(e_{t+1}\) plays no role at all, and Proposition \ref{prop-col-beyond} gives the entry
value \(1\), not \(0\). This happens exactly once in each matrix, at \(t=m\), that is at
\((i,j)=(n,m+1)\); for \(n=5\), \(d=1\) it is the entry \((5,5)\), which the Kozlov matrices
displayed in Section \ref{sec-vertices}
duly show as a one. Without the hypothesis the two propositions would contradict each other there.

\begin{proposition}[]
\regionbox{\regdiag \regzeros{(0,0)--(0.61,0)--(0,0.61)--cycle}}(\emph{Below the diagonal.}) Let \(1\le t\le m\) and \(1\le j<t\), and suppose \(\vec a\) is positive with
\(\rho_1>\rho_2>\cdots>\rho_t\), which by Lemma \ref{lem-binomial-ratio} holds for the Kozlov vector.
Then \(\extten(\vec a;d)[i][j]=0\) for \(i=d+t\); note \(i>d+j\), so this is the region strictly
below the diagonal of the overlap block.
\label{prop:below-diagonal}
\label{prop-below-diagonal}
\end{proposition}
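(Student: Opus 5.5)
The plan is to argue by contradiction, exactly as in Proposition \ref{prop-adjacent}, but now using partial sums over the stretch of the shared block between \(j\) and \(t\) instead of a single coordinate. Suppose the entry at \(i=d+t\), \(j<t\) were one. Then by the criterion \eqref{eq:no-support-functions} there is a functional \(\varphi\) exposing \(v_{d+t}\) on the first summand and \(u_j\) on the second. Write \(e_s=\varphi_{d+s}\) as in \eqref{eq:PQ}. By \eqref{eq:vertval-first} and \eqref{eq:vertval-second}, for \(j\le t'\le t\le m\) the relevant vertex values are
\[
\langle\varphi,v_{d+t'}\rangle=\textstyle\sum_{k\le d}\varphi_ka_k+Q_{t'},\qquad
\langle\varphi,u_{t'}\rangle=P_{t'} .
\]
All the competitors used below exist: \(v_{d+j}\) exists because \(j\ge1\), and \(u_{t'}\) with \(t'\le t\le m\) lies in the shared range.

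From the two exposures I extract the following, using only competitors with index between \(j\) and \(t\). Put \(x_s=e_sb_s=e_sa_s\) for \(j<s\le t\), so that \(e_sc_s=\rho_sx_s\).
\begin{itemize}
\item Exposing \(u_j\) gives \(P_j>P_k\) for every \(j<k\le t\). Equivalently, every prefix sum is negative:
\[
A_k:=\sum_{s=j+1}^{k}x_s<0\qquad(j<k\le t).
\]
\item Exposing \(v_{d+t}\) against \(v_{d+j}\) gives \(Q_t>Q_j\), that is,
\[
\sum_{s=j+1}^{t}\rho_sx_s>0 .
\]
\end{itemize}
The point is the same as in the adjacent case: both summands cross the same shared coordinates and need opposite signs. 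The difference is that they weight those coordinates differently, by \(b_s\) and \(c_s=\rho_sb_s\) respectively.

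The contradiction then comes from Abel summation:
\[
\sum_{s=j+1}^{t}\rho_sx_s=\sum_{k=j+1}^{t-1}A_k(\rho_k-\rho_{k+1})+A_t\rho_t .
\]
The hypothesis \(\rho_{j+1}>\cdots>\rho_t\) makes every difference \(\rho_k-\rho_{k+1}\) positive. Lemma \ref{lem-binomial-ratio}(1), or just positivity of \(\vec a\) with \(t\le m\), gives \(\rho_t>0\). Since every \(A_k\) is negative, the right-hand side is strictly negative, which contradicts the second bullet. So no such \(\varphi\) exists, and the entry is zero by Lemma \ref{lem-vertexdecomp}(1).

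The main obstacle is choosing which competitors to use. Comparing only the two endpoints \(v_{d+j}\) and \(u_t\) gives \(\sum x_s<0\) and \(\sum\rho_sx_s>0\), and that pair is consistent. One must use all the intermediate competitors \(u_k\) on the second summand to get negativity of every prefix sum, and only then does monotonicity of \(\rho\) close the argument. This is also exactly where the hypothesis on \(\vec a\) enters, consistent with the all-ones case, where \(\rho\) is constant and only the inequality \(\sum x_s<0\le\cdots\) survives in weakened form.
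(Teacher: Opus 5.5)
Your proof is correct and is essentially the paper's argument. The competitors \(u_{j+1},\dots,u_t\) make every prefix sum \(A_k\) negative (the paper writes \(B_{k-j}\)), \(v_{d+j}\) gives \(\sum_{s=j+1}^{t}\rho_sx_s>0\), and the same Abel summation, with \(\rho_t>0\) and positive differences, gives the contradiction. Only your closing aside is off: for constant \(\rho\) the sum is just \(A_t\rho_t<0\), so the argument still works in the all-ones case (consistent with the zeros below the diagonal in Theorem \ref{thm-ones}), and weak monotonicity of \(\rho\) on \(j+1,\dots,t\) already suffices.
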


\begin{proof}
Suppose the entry were one, witnessed by \(\varphi\), and write \(K=t-j\ge1\) and
\(B_s=\sum_{l=j+1}^{j+s}e_lb_l\) for \(1\le s\le K\). Exposing \(u_j\) on the second summand forces
\(P_{j+s}<P_j\) for every \(s\), that is \(B_s<0\); exposing \(v_i\) on the first forces
\(Q_t>Q_j\), that is \(\sum_{l=j+1}^{t}e_lc_l>0\). Summation by parts turns the second sum into
\begin{equation}\label{eq:below-diagonal}
  \sum_{l=j+1}^{t}e_lc_l\;=\;\sum_{s=1}^{K}e_{j+s}b_{j+s}\rho_{j+s}
  \;=\;B_K\rho_{t}+\sum_{s<K}B_s\bigl(\rho_{j+s}-\rho_{j+s+1}\bigr),
\end{equation}
in which every \(B_s\) is negative, \(\rho_t>0\), and every difference
\(\rho_{j+s}-\rho_{j+s+1}\) is positive by hypothesis. So the whole sum is negative, contradicting
the first summand's requirement.
\end{proof}

The remaining entries are those with \(1\le t\le m\), \(1\le j\le m\) and gap \(K:=j-t\ge2\). This is
the substantive case, and we give it in full.

\begin{proposition}[]
\regionbox{\regdiag \regones{(0.21,0.7)--(0.7,0.7)--(0.7,0.21)--cycle}}(\emph{Gap at least two.}) Let \(n\ge2\), \(\vec d=(0,d)\) with \(1\le d\le n-1\), \(m=n-d\), and let
\(\vec a\) be a positive leg vector. Let \(1\le t\le m\) and \(1\le j\le m\) with \(K:=j-t\ge2\),
and suppose
\begin{equation}\label{eq:gapK}
  \rho_{t+1}\;>\;\rho_{t+K}\;=\;\rho_j ,
\end{equation}
which by Lemma \ref{lem-binomial-ratio} holds for the Kozlov vector. Then
\(\extten(\vec a;d)[i][j]=1\) for \(i=d+t\); that is, \(v_{d+t}+u_j\) is a vertex of
\(\RA(\vec a;0,d)\).
\label{prop:gapK}
\label{prop-gapK}
\end{proposition}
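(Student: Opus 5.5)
The plan is to write down one explicit functional \(\varphi\) and apply the criterion \eqref{eq:no-support-functions}, that is, Lemma \ref{lem-vertexdecomp}(1). The three coordinate blocks are handled separately, and all the real work is on the shared block.

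First, the private blocks. On \([1,d]\) I would put \(\varphi_k=+1\). This block is invisible to the second summand, and on the first summand it makes \(\langle\varphi,v_{i'}\rangle\) strictly increasing for \(i'\le d\). So \(v_{d+t}\) beats every \(v_{i'}\) with \(i'\le d\) as soon as \(Q_t>Q_0=0\). On \([n+1,N]\) I would put \(\varphi_c=-\Gamma\) with \(\Gamma>0\) large. This block is invisible to the first summand. Since \(j\le m\), it leaves \(\langle\varphi,u_j\rangle=P_j\) unchanged and pushes every \(u_{j'}\) with \(j'>m\) below it, exactly as in the proof of Proposition \ref{prop-col-beyond}.

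What remains is to choose \(e_1,\dots,e_m\) so that \(Q_t>Q_{t'}\) for all \(t'\in[0,m]\setminus\lbrace t\rbrace\) and \(P_j>P_{j'}\) for all \(j'\in[1,m]\setminus\lbrace j\rbrace\). The increments of \(Q\) and \(P\) are \(e_sc_s\) and \(e_sb_s\), and these carry the same sign. I would therefore take the following signs.
\begin{itemize}
\item \(e_s=+1\) for \(s\le t\), so both \(Q\) and \(P\) rise strictly up to \(t\), and \(Q_t>0\).
\item \(e_s=-1\) for \(s>j\), so both fall strictly after \(j\).
\item \(e_{t+1}=-x<0\), \(e_j=y>0\), and \(e_s=\varepsilon>0\) for \(t+2\le s\le j-1\), this middle range being possibly empty.
\end{itemize}

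Given these signs, \(Q\) is maximized uniquely at \(t\) provided every sum \(\sum_{s=t+1}^{t'}e_sc_s\) with \(t'>t\) is negative. Since the steps after \(j\) are negative, the binding cases are \(t'\in[t+1,j]\). Likewise, \(P\) is maximized uniquely at \(j\) provided every sum \(\sum_{s=t'+1}^{j}e_sb_s\) with \(t'<j\) is positive, and the binding cases are \(t'\in[t,j-1]\). Letting \(\varepsilon\to0\), both conditions reduce to the pair of endpoint inequalities
\[
yc_j<xc_{t+1},\qquad yb_j>xb_{t+1},
\]
that is, \(b_{t+1}/b_j<y/x<c_{t+1}/c_j\). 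This interval is non-empty exactly when \(c_{t+1}/b_{t+1}>c_j/b_j\), which is \(\rho_{t+1}>\rho_j\), the hypothesis \eqref{eq:gapK}. Here \(c_j=a_{d+j}>0\) because \(j\le m\).

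I would then fix \(y/x\) strictly inside the interval and take \(\varepsilon\) small enough that the finitely many strict inequalities survive. The intermediate cases need checking: for \(t'\) strictly between \(t+1\) and \(j\), the \(Q\)-sums are \(-xc_{t+1}\) plus at most \(O(\varepsilon)\), and the \(P\)-tails are \(yb_j\) plus positive \(\varepsilon\)-terms. Finally, \(\Gamma\) is taken large after everything else is fixed.

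The main obstacle is this shared-block step. It is the only place where the two summands pull against each other, and the only place the hypothesis on \(\rho\) enters. Concretely, the point to verify is that exposing \(v_{d+t}\) and exposing \(u_j\) reduce, after the sign choices, to just the two endpoint comparisons between \(e_{t+1}\) and \(e_j\). The role of \(K\ge2\) is that \(t+1\ne j\), so these two coefficients are distinct and can be chosen independently. When \(K=1\) they coincide, which is exactly the obstruction in Proposition \ref{prop-adjacent}.
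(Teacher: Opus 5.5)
Your proposal is correct and is essentially the paper's proof. You use the same sign pattern on the shared block, and your two endpoint constraints $b_{t+1}/b_j<y/x<c_{t+1}/c_j$ are exactly the paper's $1<1+\delta<\rho_{t+1}/\rho_j$ under its choice $x=1/a_{t+1}$, $y=(1+\delta)/a_j$. The only differences are that the paper sets the middle coefficients to exactly $0$ and the private penalty to $-1$. Neither your $\varepsilon\to0$ limit nor a large $\Gamma$ is needed: with $\varepsilon=0$ the $P$-comparisons on $[t+1,j-1]$ are still won by $yb_j>0$, and any positive penalty suffices once $P_j\ge P_m$.
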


\begin{proof}
Exhibit the functional. Any \(\delta\) with \(0<\delta<\rho_{t+1}/\rho_j-1\) will do, and such a
\(\delta\) exists exactly because \(\rho_{t+1}>\rho_j>0\); we fix the midpoint
\begin{equation}\label{eq:gapK-2}
  \delta=\frac12\Bigl(\frac{\rho_{t+1}}{\rho_j}-1\Bigr),
  \qquad\text{so that}\qquad (1+\delta)\rho_j=\frac{\rho_{t+1}+\rho_j}{2}<\rho_{t+1},
\end{equation}
which makes the one inequality the choice of \(\delta\) is for immediate. Define
\(\varphi\in(\mathbb R^N)^*\) by
\begin{equation}\label{eq:gapK-3}
  \varphi_k=1\ (1\le k\le d),\qquad \varphi_{n+s}=-1\ (1\le s\le d),\qquad
  \varphi_{d+s}=e_s\ (1\le s\le m),
\end{equation}
where the shared block is
\begin{align}
  e_s &= 1 & &(s\le t), \label{eq:gapK-e1}\\
  e_{t+1} &= \frac{-1}{a_{t+1}}, & & \label{eq:gapK-e2}\\
  e_{t+l} &= 0 & &(1<l<K), \label{eq:gapK-e3}\\
  e_{j}=e_{t+K} &= \frac{1+\delta}{a_{j}}, & & \label{eq:gapK-e4}\\
  e_s &= -1 & &(j<s\le m). \label{eq:gapK-e5}
\end{align}
\textbf{No largeness is required anywhere}: the constants \(\pm1\) on the two private blocks are used as
written, and the verification below never asks for more. Put \(z_l=e_{t+l}a_{t+l}\), so that
\(z_1=-1\), \(z_K=1+\delta\) and \(z_l=0\) otherwise, and note \(e_{t+l}a_{d+t+l}=z_l\rho_{t+l}\).
Write \(A=\sum_{k\le d}a_k\) for the first summand's private-block total.

By Lemma \ref{lem-vertexdecomp}(1) it suffices to show that \(\varphi\) uniquely maximizes on each summand,
at \(v_i\) and at \(u_j\) respectively. \textbf{The three index blocks do not interact}, which is what
makes the verification immediate: \(v_{i'}\) never meets \([n+1,N]\), and \(u_{j'}\) never meets
\([1,d]\).

For the first summand. A vertex \(v_{i'}\) with \(i'\le d\) has value \(\sum_{k\le i'}a_k\le A\); one
with \(i'>d\) has value \(A+Q_{i'-d}\). So it is enough that \(Q_t\) be the strict maximum of
\(Q_0,\dots,Q_m\), together with \(Q_t>0\), and the latter is \(Q_t>Q_0\). If \(t'\le t\) then
\(Q_{t'}=\sum_{s\le t'}a_{d+s}\) is strictly increasing in \(t'\), so \(Q_t\) beats all of them,
including \(Q_0=0\). For \(t'=t+l\) with \(1\le l\le K\),
\begin{equation}\label{eq:gapK-Qstep}
  Q_{t'}-Q_t\;=\;\sum_{l'\le l}z_{l'}\rho_{t+l'}
  \;=\;\begin{cases}
    -\rho_{t+1}<0, & l<K,\\[2pt]
    -\rho_{t+1}+(1+\delta)\rho_j=\tfrac12(\rho_j-\rho_{t+1})<0, & l=K,
  \end{cases}
\end{equation}
the second line by \eqref{eq:gapK-2}. For \(t'>j\),
\begin{equation}\label{eq:gapK-Qtail}
  Q_{t'}\;=\;Q_j-\sum_{s=j+1}^{t'}a_{d+s}\;<\;Q_j\;<\;Q_t .
\end{equation}

For the second summand. A vertex \(u_{j'}\) with \(j'\le m\) has value \(P_{j'}\); one with
\(j'>m\) has value \(P_m-\sum_{s=m+1}^{j'}a_s\). The three ranges of \(j'\) below \(m\) give
\begin{align}
  P_{j'} &= \sum_{s\le j'}a_s \;\le\; P_t \;<\; P_t+\delta \;=\; P_j
    & &(j'\le t), \label{eq:gapK-P1}\\
  P_{j'} &= P_t-1, \quad\text{so}\quad P_j-P_{j'}=1+\delta>0
    & &(j'=t+l,\ 1\le l<K), \label{eq:gapK-P2}\\
  P_{j'} &= P_j-\sum_{s=j+1}^{j'}a_s \;<\; P_j
    & &(j<j'\le m), \label{eq:gapK-P3}
\end{align}
using \(P_j=P_t+\sum_lz_l=P_t+\delta\) in the first line. Hence \(P_j\) is the strict maximum of \(P_1,\dots,P_m\); in
particular \(P_m\le P_j\), so for \(j'>m\) the value \(P_m-\sum_{s=m+1}^{j'}a_s<P_m\le P_j\), the
first inequality being strict because \(\vec a\) is positive. This is the step at which one expects
to need a large penalty on the private block, and does not: \(u_j\) already beats \(u_m\) weakly, and
any strictly positive penalty then settles it.
\end{proof}

\begin{mexample}
The construction run on numbers, the smallest case with a gap: \(n=4\), \(d=1\),
\(\vec a=\kozvec(4)=(4,6,4,1)\), so \(m=3\) and \(N=5\); take \(t=1\) and \(j=3\), whence \(K=2\).
The ratios are \(\rho=(3/2,\,2/3,\,1/4,\,0)\), so the hypothesis \eqref{eq:gapK} reads
\(\rho_2=2/3>1/4=\rho_3\) and
\begin{equation}\label{eq:gapK-ex}
  \delta=\tfrac12\bigl(\tfrac{\rho_2}{\rho_3}-1\bigr)=\tfrac56,
  \qquad (1+\delta)\rho_3=\tfrac{11}{24}<\tfrac23=\rho_2 .
\end{equation}
The shared block is \(e=(1,\,-1/6,\,11/24)\) by
\eqref{eq:gapK-e1}--\eqref{eq:gapK-e4}, with no \(e_s=-1\) tail since \(j=m\), so
\begin{equation}\label{eq:gapK-ex-2}
  \varphi=\bigl(1,\ 1,\ -\tfrac16,\ \tfrac{11}{24},\ -1\bigr),
\end{equation}
the outer \(1\) and \(-1\) being the two private blocks, here one coordinate each. Its values are
\begin{equation}\label{eq:gapK-ex-3}
  \langle\varphi,v_{i'}\rangle=\bigl(4,\ \mathbf{10},\ \tfrac{28}{3},\ \tfrac{235}{24}\bigr),
  \qquad
  \langle\varphi,u_{j'}\rangle=\bigl(4,\ 3,\ \mathbf{\tfrac{29}{6}},\ \tfrac{23}{6}\bigr),
\end{equation}
maximized uniquely at \(v_2=v_{d+t}\) and at \(u_3=u_j\) as claimed, so
\(v_2+u_3=(4,10,6,4,0)\) is a vertex of \(\RA(\kozvec(4);0,1)\), which it is, that polytope having
\(11\) vertices in dimension \(4\).

The two bookkeeping quantities of the proof come out as \(P=(4,\,3,\,29/6)\) and
\(Q=(6,\,16/3,\,139/24)\). One sees both mechanisms directly: \(P_j-P_t=29/6-4=\delta\), the whole
margin by which \(u_j\) wins being \(\delta\) itself; and \(Q_j-Q_t=139/24-6=-5/24\), which is
\(\tfrac12(\rho_3-\rho_2)\) exactly as in \eqref{eq:gapK-Qstep}. The second is the step the choice of
\(\delta\) is for: at \(\delta\) any larger than \(\rho_2/\rho_3-1=5/3\) the sign would flip and
\(v_2\) would lose to \(v_4\).
\label{ex-gapK}
\end{mexample}

\begin{remark}
Three things the proof makes visible.

First, it needs \textbf{only} \(\rho_{t+1}>\rho_j\) at the two ends of the gap. The full strict monotonicity
of Lemma \ref{lem-binomial-ratio} is convenient but not required, so the proposition holds for any
positive leg vector with that one property, not just for the Kozlov vector.

Second, it explains the \(\vec a\)-dependence of Section \ref{sec-vertices} exactly: for
\(\vec a=\onesvec(n)\) one has \(\rho_s\equiv1\), so \(\rho_{t+1}/\rho_j-1=0\), no \(\delta\) exists,
and the construction is unavailable --- which is precisely why the all-ones extremal matrix is a
different matrix, and by Theorem \ref{thm-ones} genuinely has a zero at every such entry.

Third, the functional is completely explicit, with no parameter left to be chosen large. An earlier
version carried a factor \(\Lambda\) on the private blocks and asked for it to be large; the
verification above shows that \(\Lambda=1\) always works, because \(u_j\) already weakly beats
\(u_m\) before any penalty is applied.

The construction was checked against a direct computation of the exposed pair; see
Section \ref{sec-methods}.
\label{rem-gapK}
\end{remark}
\subsection{The Kozlov extremal matrix and the vertex count}
\label{sec-app-kozlov}
\begin{proof}[Proof of Theorem~\ref{thm:kozlov-matrix}]
The six propositions classify every pair \((i,j)\) with \(1\le i,j\le n\), and no pair is left
unaccounted for. If \(i\le d\), Proposition \ref{prop-row-below} gives the entry \(1\); if \(j>m\),
Proposition \ref{prop-col-beyond} does;
and both disjuncts of the displayed condition fail because \(t\le0\) in the first case and \(j>m\) in
the second. Otherwise \(1\le t\le m\) and \(1\le j\le m\), and exactly one of four cases holds:
\(j<t\), by Proposition \ref{prop-below-diagonal}, entry \(0\); \(j=t\), by
Proposition \ref{prop-diagonal}, entry \(1\); \(j=t+1\), by Proposition \ref{prop-adjacent} --- whose
hypothesis \(j\le m\) is met --- entry \(0\); and \(j\ge t+2\), by Proposition \ref{prop-gapK}, entry
\(1\). The hypotheses on \(\vec a\) of Propositions \ref{prop-below-diagonal} and \ref{prop-gapK} hold for
the Kozlov vector by Lemma \ref{lem-binomial-ratio}.

Note that \(t\le m\) is automatic: \(t=i-d\le n-d=m\). It is stated in the conclusion only to make
the zero set self-contained.
\end{proof}

\begin{proof}[Proof of the vertex count in Theorem~\ref{thm:kozlov-matrix}]
By Lemma \ref{lem-vertexdecomp}(1) every vertex of a Minkowski sum is a sum of vertices of the summands,
so the vertices are among the \(n^2\) points \(v_i+u_j\), and
Theorem \ref{thm-kozlov-matrix} says exactly which of them are vertices --- this is the definition of
\(\extten(\vec a;d)\). Distinctness holds because a vertex has only one decomposition as a sum of
points of the summands, which is Lemma \ref{lem-vertexdecomp}(2).

It remains to count the zeros of \(\extten(\vec a;d)\). By Theorem \ref{thm-kozlov-matrix} these lie in
the rows with \(1\le t\le m\), and the row with a given \(t\) contributes \(t-1\) zeros from
\(1\le j<t\), together with one more from \(j=t+1\) exactly when \(t+1\le m\). Hence the number of
zeros is
\begin{equation}\label{eq:gapK-6}
  \sum_{t=1}^{m}(t-1)\;+\;(m-1)\;=\;\frac{m(m-1)}{2}+(m-1)\;=\;\frac{(m-1)(m+2)}{2}. \qedhere
\end{equation}
\end{proof}
\section*{Acknowledgements}
\label{sec:org4bf81a9}
Aliaksandra Kupreyeva's Bachelor's thesis \emph{Simplices of \(f\)-vectors} (Linköping University, 2019;
report LiTH-MAT-EX--2019/05--SE), written under my supervision, is where the lattice-point question
inside the Kozlov simplex was first raised in the form used in the companion paper. The name given in
Definition \ref{def-ids} replaced an earlier and worse coinage of my own, at a colleague's suggestion.

Just as Geheimrat Goethe had the diligent Friedrich Wilhelm Riemer to help him write \emph{Die
Wahlverwandtschaften}, just as Grothendieck had Jean Dieudonné to help him complete EGA, I have
\textbf{Claude}! What exactly that amounted to is set out in the disclosure section below.

Computations were carried out in SageMath and in Macaulay2, the latter using
Amata and Crupi's own \texttt{ExteriorIdeals} and \texttt{ExteriorModules} packages
\autocite{AmataCrupi2018ExteriorIdeals,AmataCrupi2021,Macaulay2,SageMath}.
\section*{Disclosure of AI assistance}
\label{sec:org745ef0a}
This manuscript was produced with substantial assistance from a large language model, Claude
(Anthropic), and the extent of it is set out here rather than left to be inferred.

\textbf{What the model was given.} The work started from a manual draft of my own, together with some
experimental and conjectural work on the extremal tensor, and a list of observations and
definitions that were easy to state and not yet proved --- among them the \emph{ff}-vector of a vector
of simplicial complexes, and the guess that the Minkowski sum of shifted Kozlov simplices gives the
convex hull of the Hilbert functions of Amata--Crupi modules, which became Theorem \ref{thm-main}.
The programme and its objects are therefore mine; what follows is what the model did with them.

The model: found and wrote up the facet description of the Kozlov simplex (Corollary \ref{cor-facets})
and verified it by exact polyhedral computation; supplied the proof of Theorem \ref{thm-kozlov} from the
local LYM inequality (Lemma \ref{lem-lym}); formulated the \(r\)-vector//ff/-vector language of
Section \ref{sec-combinatorial} and the indicator module of Definition \ref{def-indicator-module}; proved
Proposition \ref{prop-sumset} and Theorem \ref{thm-main}, including the normalization
Corollary \ref{cor-normalization}, the dimension formula Proposition \ref{prop-dimension} and the edge-case
analysis; found Example \ref{ex-nonids}, the sharper Remark \ref{rem-ac-reduction} and
Proposition \ref{prop-in-ids}, which together fix the scope of the main theorem; and proved Theorem \ref{thm-kozlov-matrix} in full, the
gap-\(\ge2\) half by the explicit functional of Proposition \ref{prop-gapK}, and with it
its vertex count; found the \(H\)-representation stated after Theorem \ref{thm-ones}; found and
verified the pairwise reduction, in the sharpened consecutive form; and ruled out the
certificate-level formulation discussed in the corresponding remark in Part III. It
drafted the exposition throughout, with the author directing the scope and the order of
presentation.

Every polyhedral claim in this paper was checked along a route independent of the formula being
tested --- vertex descriptions against facet descriptions, Minkowski sums against direct enumeration
of the achievable set, and the algebraic statements against Macaulay2 --- and the reference list has
been checked against primary sources. The author has verified the results to the best of his ability,
made the final edits, and assumes full responsibility for any errors, mistakes or gaps that remain.
\section*{Funding}
\label{sec:org0d907ee}
This research received no external funding. It was carried out as part of the author's regular duties
at Linköping University, within the fraction of that employment allocated to research.
\section*{Conflicts of interest}
\label{sec:orgade77a7}
The author declares no financial or non-financial conflicts of interest. The nature and extent of
large-language-model assistance in producing this manuscript is disclosed in full in the section
above.

\end{document}